\documentclass{article}
\usepackage{graphicx}
\usepackage{subfigure}
\usepackage{amsfonts}
\usepackage{amsmath}
\usepackage{amssymb}
\usepackage{url}
\usepackage{fancyhdr}
\usepackage{indentfirst}
\usepackage{enumerate}
\usepackage{cancel}
\usepackage{dsfont}
  \usepackage[colorlinks=true,citecolor=blue]{hyperref}
\usepackage{amsthm}
\usepackage{color}
\usepackage{natbib}
\usepackage{comment}
\usepackage{capt-of}
\usepackage{multirow}
\usepackage[T1]{fontenc}
\usepackage[utf8]{inputenc}
\def\d{\mathrm{d}}

\newcommand{\E}{\mathbb{E}}

\newcommand{\R}{\mathbb{R}}

\newcommand{\p}{\mathbb{P}}

\newcommand{\id}{\mathds{1}}

\renewcommand{\ge}{\geqslant}
\renewcommand{\le}{\leqslant}
\renewcommand{\geq}{\geqslant}
\renewcommand{\leq}{\leqslant}
\renewcommand{\epsilon}{\varepsilon}
\newcommand{\esssup}{\mathrm{ess\mbox{-}sup}}
\newcommand{\essinf}{\mathrm{ess\mbox{-}inf}}

\renewcommand{\cdots}{\dots}

\theoremstyle{plain}
\newtheorem{theorem}{Theorem}
\newtheorem{corollary}{Corollary}
\newtheorem{lemma}{Lemma}
\newtheorem{proposition}{Proposition}
\theoremstyle{definition}

\newtheorem{example}{Example}

\theoremstyle{remark}

\newcommand{\cet}{\begin{center}}
\newcommand{\ecet}{\end{center}}

\usepackage{setspace}

\begin{document}

\title{Risk diversification for infinitely divisible distributions}

\author{
    Peng Liu\thanks{\scriptsize School of Mathematics, Statistics and Actuarial Science, University of Essex, 
    UK. Email: \texttt{peng.liu@essex.ac.uk}}
    \and
    Tiantian Mao\thanks{\scriptsize Department of Statistics and Finance, School of Management, University of Science and Technology of China, Hefei, China.  Email: \texttt{tmao@ustc.edu.cn}}
   }

\date{}
\maketitle



\begin{abstract}
 In this paper, we study the diversification properties of convex combinations of iid  infinitely divisible random variables. For L\'{e}vy processes with bounded variation sample paths, we characterize, in terms of subadditivity and concavity of the transformed L\'{e}vy tails,  L\'{e}vy processes that exhibit the non-diversification phenomenon or the reverse diversification order with respect to the majorization order uniformly over all time horizons. For general symmetric L\'{e}vy processes without a Gaussian component, we show that the symmetric 1-stable L\'{e}vy process is the only nontrivial process exhibiting  either phenomenon.  We further investigate convex combinations of components of multivariate infinitely divisible distributions, allowing for dependent and heterogeneous components, and characterize the L\'{e}vy measures of multidimensional  L\'{e}vy processes exhibiting the two adverse diversification phenomena uniformly over all time horizons. Explicit characterizations are obtained for  the multidimensional symmetric L\'{e}vy processes,  multidimensional $\alpha$-stable processes and  multidimensional compound Poisson processes. Finally, we show that the non-diversification phenomenon extends beyond L\'{e}vy processes to running maxima and integrals of increasing convex functionals of L\'{e}vy processes, while both adverse diversification phenomena are preserved for L\'{e}vy-driven stochastic integrals with  nonnegative deterministic kernels. Applications to ruin theory, storage processes and stochastic volatility are also discussed. 
\end{abstract}

\textbf{Keywords:} Non-diversification phenomenon; The reverse diversification order; Infinitely divisible distributions; L\'{e}vy processes; L\'{e}vy measures; Stochastic dominance; Majorization order; running maxima; L\'{e}vy-driven Ornstein-Uhlenbeck (OU) processes


\section{Introduction}

Diversification has been a  fundamental principle in finance, economics  and insurance since \cite{M52}, suggesting that portfolio diversification or risk sharing generally leads to risk  reduction.   However, the literature has shown that diversification may have adverse effects for risks with infinite mean; see, for example, \cite{F65} for an early result concerning stable distributions. Infinite-mean models arise in various contexts in insurance and finance. Examples include financial returns from some technological innovations in \cite{SHK01} and \cite{SV07}, and  losses from earthquakes in \cite{IJW09}, nuclear power accidents in \cite{HW12}, cyber risk in \cite{EW19} and operational risk in \cite{NEC06}, all of which have been modeled or numerically estimated as having infinite mean. We refer to \cite{IJW11}, \cite{CEW25} and \cite{CEW25b} for discussions of the implications of  infinite-mean models  in risk management.

 Stochastic dominance  is a fundamental tool widely applied in economic decision theory and finance for the analysis of the risk preferences of decision makers. The most commonly used stochastic orders are  first- and second-order stochastic dominance.  We refer to \cite{L16} for the applications in decision theory, and  \cite{MS02} and \cite{SS07} for the mathematical foundation of stochastic dominance. Majorization provides  a natural ordering for comparing  the diversity of the components of vectors.  Majorization order has been  applied in economics to analyze income inequality and its effects on  economic models; see e.g., \cite{II07} and \cite{MOA11}. It has also been used in finance and risk management to study portfolio and risk diversification; see, e.g., \cite{HL03}, \cite{IB09}, \cite{CHWZ25} and the references therein.

 The non-diversification phenomenon under the first-order stochastic dominance  and the reverse diversification order with respect to the majorization order were obtained in \cite{IB05} for portfolios consisting of iid positive one-sided $\alpha$-stable random variables. Later, \cite{IB09} investigated  the majorization order of portfolios consisting of iid random variables following convolutions of symmetric stable distributions, focusing on the Value-at-Risk for confidence levels above $1/2$, where symmetry plays a key role in the analysis.  More recently, a striking result was obtained in \cite{CEW25}, showing that a nontrivial  convex combination of iid Pareto risks with infinite mean strictly increases the risk in the sense of the first-order stochastic dominance. The result also holds true for some specific negatively dependent but identically distributed super-Pareto risks with infinite mean. The non-diversification phenomenon has been extended to iid random variables with other distributions including \cite{M25} for super-Cauchy distributions, \cite{ALO25} for inverted-subadditive distributions, and \cite{CS26} for super-Fr\'{e}chet distributions. Moreover, the reverse diversification order was obtained for iid random variables with Pareto distributions with infinite mean in   \cite{CHWZ25} and iid random variables with a class of distributions called inverted concavity  in \cite{CHSZ26}. 

 Note that $\alpha$-stable distributions, Pareto distributions and Fr\'{e}chet distributions with shape parameter not larger than one, and their convolutions all belong to the class of infinitely divisible distributions. Infinitely divisible distributions and L\'{e}vy processes are fundamental objects widely used in  risk theory, queueing and storage theory,  finance, and operations research.  We refer to the classical monographs \cite{B96} and \cite{S99}  for the definitions and characterization of infinitely divisible distributions and L\'{e}vy processes, and  \cite{AA10} for applications to insurance risk, \cite{DM15} for L\'{e}vy-driven queueing and storage models,  Chapter 7 of \cite{K14} for  first-passage problems, and \cite{BS01} and \cite{H09} for applications in finance. In this paper, rather than identifying additional specific classes of distributions exhibiting adverse diversification, we seek a structural characterization of such phenomena within the broad class of infinitely divisible distributions in terms of their L\'{e}vy measures. 

 In risk management and finance, many interesting risk- or profit-related quantities depend on the sample paths of the underlying stochastic processes such as running maxima representing the maximum loss over a time period, L\'{e}vy-driven stochastic integrals, and time integrals representing the cumulative path-dependent losses; see, e.g.,  Chapter XI of \cite{AA10}, \cite{BS01} and \cite{H09}. To establish stochastic dominance for such path-dependent quantities,  stochastic dominance at a single time point is generally insufficient. This motivates the study of  the stochastic dominance properties that hold  uniformly over all time horizons. Therefore, in this paper, we study the non-diversification phenomenon and the reverse diversification order for L\'{e}vy processes uniformly over all time horizons. We then apply these results to running maxima in risk theory, storage processes and  L\'{e}vy-driven Ornstein-Uhlenbeck stochastic volatility models. We highlight that the stochastic dominance uniformly over all time horizons is a stronger property than the one for a single time point, allowing for broader applications in risk management and finance.  Proposition \ref{ex:time-dependent-dominance} in Section \ref{Sec:example} further shows that this uniformity requirement is substantive: for a compound Poisson process,   the direction of the diversification effect may change with the time horizon. The diversification phenomena were recently established for iid compound Poisson processes in \cite{CHSZ26}, which, to the best of our knowledge, is the only existing study establishing those phenomena for stochastic processes. However, those phenomena for general L\'{e}vy processes and multidimensional L\'{e}vy processes allowing dependent and heterogeneous components remain largely unexplored. The objective of this paper is to characterize the structure of the L\'{e}vy measures under which the corresponding L\'{e}vy processes exhibit the non-diversification phenomenon or the reverse diversification order.

 The contributions of this paper are summarized below.
\begin{enumerate}

\item [1.] We give a necessary and sufficient characterization of adverse diversification uniformly over all time horizons for L\'{e}vy processes with bounded variation. The non-diversification phenomenon and the reverse diversification order correspond, respectively, to subadditivity and concavity of the transformed L\'{e}vy tail, and both require the absence of negative jumps. For univariate symmetric L\'{e}vy processes without a Gaussian component, the Cauchy process is the only nontrivial process exhibiting either phenomenon uniformly over all time horizons. We further construct a compound Poisson process for which the non-diversification phenomenon fails at a short horizon but holds at all sufficiently long horizons for each fixed nontrivial weight vector, showing that time-uniformity is a substantive restriction.

\item [2.] We identify the distinct roles of marginal heterogeneity and dependence in adverse diversification. For multivariate $\alpha$-stable processes with $0<\alpha<1$, nonnegative jumps, and a common drift across components, a component is stochastically dominated by every convex combination if and only if it has the smallest $\alpha$-th moment under the spectral measure. The reverse diversification order holds if and only if the process is exchangeable. These results identify the precise dependence symmetry required for reverse majorization comparisons beyond the iid setting.

\item [3.] We show that adverse diversification can persist for path-dependent risks. Under our L\'{e}vy-measure conditions, non-diversification is preserved for running maxima and integrals of increasing convex functionals, while both orders are preserved under nonnegative deterministic stochastic integration. The resulting comparisons show that diversification of a fixed exposure across independent copies can increase finite-horizon ruin probabilities and storage levels, and that mean reversion in L\'{e}vy-driven Ornstein-Uhlenbeck models need not restore diversification benefits.
\end{enumerate}

The organization of the paper is as follows. Notation and preliminaries are presented in Section \ref{sec:notation}. In Section \ref{Sec:iid}, we consider the iid case and characterize the L\'{e}vy measures of all  L\'{e}vy processes with bounded variation sample paths exhibiting the two adverse diversification phenomena. We also characterize symmetric L\'{e}vy processes exhibiting these phenomena.  Section \ref{Sec:multivariate} is devoted to the multivariate case, allowing for dependence and heterogeneous marginals. In particular, we characterize the L\'{e}vy measures of  multidimensional symmetric L\'{e}vy processes, multidimensional $\alpha$-stable processes and multidimensional compound Poisson processes.   In Section \ref{Sec:extension}, we extend the two adverse diversification phenomena beyond L\'{e}vy processes to several sample-path-dependent processes, including  running maxima of L\'{e}vy processes, L\'{e}vy-driven stochastic integrals with nonnegative deterministic kernels,  and integrals of increasing convex functionals of L\'{e}vy processes. Applications to ruin theory, storage processes and  stochastic volatility are also discussed. In Section \ref{Sec:example}, we construct a compound Poisson process for which the validity of  the stochastic dominance relation changes with the time horizon. Section \ref{Sec:conc} concludes the paper.   All the proofs, together with  one additional result on multivariate $\alpha$-stable distributions, are postponed to the Appendix. 


\section{Notation and Preliminaries}\label{sec:notation}

Let $(\Omega,\mathcal F, \p)$ be an atomless probability space. For a real-valued random variable $X$, let $F_X$ denote its cumulative distribution function under $\p$.  
For two random variables $X$ and $Y$, $X\leq_{\mathrm{st}} Y$ denotes that  $X$ is dominated by $Y$ in \emph{first-order} stochastic dominance, i.e., $F_X(x)\geq F_{Y}(x)$ for all $x\in \R$. We say $X\leq_{\mathrm{st}} Y$ \emph{strictly} if $X\leq_{\mathrm{st}} Y$ and $F_X\neq F_Y$; we say the stochastic dominance is  \emph{strongly strict} if $F_X(x)\geq F_{Y}(x)$ for all $x\in \R$, and  $F_X(x)>F_{Y}(x)$ for all $x\in (\essinf X,\esssup X)$.   We use $X\leq_{\mathrm{a.s.}} Y$ to denote $X\leq Y$ almost surely.  Throughout the paper, let $n\geq 2$.

 Let $X_1,\dots,X_n$ be iid random copies of $X$. 
We are interested in the conditions under which, for  $\boldsymbol\theta:=(\theta_1,\dots,\theta_n)\in \Delta_n:=\{(\theta_1,\dots,\theta_n): \theta_1+\dots+\theta_n=1, \theta_i\geq 0,~i\in [n]\}$, the following holds:
\begin{align}\label{Sup}
  X\leq_{\mathrm{st}} \theta_1X_1+\dots+\theta_nX_n.
\end{align}

The stochastic dominance property in \eqref{Sup} shows the non-diversification phenomenon, i.e., diversification may increase the risk in the sense of the first-order stochastic dominance. We are particularly  interested in the conditions on the distributions when \eqref{Sup} holds for all $\boldsymbol\theta\in\Delta_n$.

Moreover, we aim to explore the stochastic order for convex linear combinations of some iid random variables. 
For $\boldsymbol\theta=(\theta_1,\dots,\theta_n)\in\Delta_n$ and $\boldsymbol\eta=(\eta_1,\dots,\eta_n)\in\R_+^n$, we say $\boldsymbol\theta$ is dominated by $\boldsymbol\eta$ in the \emph{majorization order}, denoted by $\boldsymbol\theta \preceq\boldsymbol\eta$ , if $\sum_{i=1}^n\theta_i=\sum_{i=1}^n\eta_i$ and $\sum_{i=1}^m\theta_{(i)}\geq \sum_{i=1}^m\eta_{(i)}$ for $m\in [n]$, where $\theta_{(i)}$ and $\eta_{(i)}$ represent the $i$-th smallest order statistics of $\boldsymbol\theta$ and $\boldsymbol\eta$ respectively. We write $\boldsymbol\theta \prec\boldsymbol\eta$ if $\boldsymbol\theta \preceq\boldsymbol\eta$ and $\boldsymbol\theta$ is not a permutation of $\boldsymbol\eta$. In this paper, without loss of generality, we constrain $\boldsymbol\theta,\boldsymbol\eta\in \Delta_n$ when we study the majorization order. We refer to \cite{MOA11} for the definition, interpretation and application of the majorization order. 

For  $\boldsymbol\theta,\boldsymbol\eta\in \Delta_n$ satisfying $\boldsymbol\theta \preceq\boldsymbol\eta$  and iid copies of X, $X_1,\dots,X_n$, we next study the following stochastic order
\begin{align}\label{Sub}
    \sum_{i=1}^n\eta_iX_i\leq_{\mathrm{st}} \sum_{i=1}^n\theta_iX_i.
\end{align}

The stochastic dominance property in \eqref{Sub} shows the reverse diversification direction, i.e., a more diversified portfolio may have higher risk in the sense of the first-order stochastic dominance. Note that if \eqref{Sub} holds for all $\boldsymbol\theta,\boldsymbol\eta\in \Delta_n$ satisfying $\boldsymbol\theta \preceq\boldsymbol\eta$, then
\eqref{Sup} holds for all $\boldsymbol\theta\in\Delta_n$.

Note that the stochastic dominance in \eqref{Sup} or \eqref{Sub} holds for $X$ if and only if the reverse stochastic dominance  in \eqref{Sup} or \eqref{Sub} holds for  $-X$.  Hence, instead of studying the reverse stochastic dominance, we only focus on \eqref{Sup} and \eqref{Sub} in this paper.

Let $\mathbf X=(X_1,\dots,X_n)$ be a random vector with an infinitely divisible distribution $\mu_{\mathbf X}$. Then there exist an $n\times n$ non-negative definite matrix $A$, a measure $\nu$ on $\R^n$ satisfying $\nu(\{\mathbf 0\})=0$ and $\int_{\R^n} (\|\mathbf x\|^2\wedge 1)\nu(\d \mathbf x)<\infty$, and a $\boldsymbol\gamma\in\R^n$ such that the characteristic function $$\hat{\mu}_{\mathbf X}(\mathbf z)=\mathbb E(e^{i\langle\mathbf z, \mathbf X\rangle})=\exp\left[-\frac{1}{2}\langle \mathbf z,A\mathbf z\rangle+i\langle\boldsymbol{\gamma},\mathbf z\rangle+\int_{\R^n} \left(e^{i\langle\mathbf z,\mathbf x\rangle}-1-i\langle\mathbf z,\mathbf x\rangle\id_{D}(\mathbf x) \right)\nu(\d\mathbf x)\right],$$
where  $D:=\{\mathbf x\in\R^n: \|\mathbf x\|\leq 1\}$,   $i^2=-1$, $\langle \cdot,\cdot\rangle$ is the inner product and $\nu$ is called the \emph{L\'{e}vy measure.}
By convention, if $\int_{D}\|\mathbf x\|\nu(\d\mathbf x)<\infty$, then $\hat{\mu}_{\mathbf X}$ can be rewritten as
$$\hat{\mu}_{\mathbf X}(\mathbf z)=\exp\left[-\frac{1}{2}\langle \mathbf z,A\mathbf z\rangle+i\langle\boldsymbol{\gamma}_0,\mathbf z\rangle+\int_{\R^n} \left(e^{i\langle\mathbf z,\mathbf x\rangle}-1 \right)\nu(\d\mathbf x)\right].$$
The distribution $\mu_{\mathbf X}$ is uniquely determined by the generating triplet $(A,\nu, \boldsymbol \gamma)$. 
When $\int_{D}\|\mathbf x\|\nu(\d\mathbf x)<\infty$, it can equivalently
be characterized by $(A,\nu,\boldsymbol \gamma_0)$, where
$ 
\boldsymbol \gamma_0=\boldsymbol \gamma-\int_D \mathbf x\,\nu(d\mathbf x).
$ 

Let $\{X_t\}_{t\geq 0}$ be a L\'{e}vy process with generating triplet $(A,\nu, \boldsymbol \gamma)$; that is, $X_0=0$ a.s., $\{X_t\}_{t\geq 0}$ has stationary and independent increments, and  $X_1$ has a triplet  $(A,\nu, \boldsymbol \gamma)$. Since a L\'{e}vy process admits a c\`{a}dl\`{a}g modification, throughout the paper, we work on a version whose sample paths are right-continuous with left limits.  For more details on the definitions, properties, characterizations and applications of infinitely divisible distributions and L\'{e}vy processes, we refer to the classical monographs \cite{B96}, \cite{S99} and \cite{K14}.

In this paper, we study the structural properties of the L\'{e}vy measures  such that \eqref{Sup} or \eqref{Sub} holds for the corresponding L\'{e}vy processes uniformly for all time horizons.

Finally, we give some definitions and notation that will be used frequently in this paper.
Let $I$ be either $(-\infty,0)$ or $(0,\infty)$. We say a nonnegative measurable function $g$ is \emph{subadditive} (resp. \emph{strictly subadditive}) on $I$ if $g(x+y)\leq g(x)+g(y)$ (resp. $g(x+y)< g(x)+g(y)$ ) for all $x,y\in I$; we say a nonnegative measurable function $g$ is \emph{superadditive} (resp. \emph{strictly superadditive}) on $I$ if $g(x+y)\geq g(x)+g(y)$ (resp. $g(x+y)> g(x)+g(y)$ ) for all $x,y\in I$.  For $x\in\R$, let $x_+=\max(x,0)$ and $x_-=-\min(x,0)$.
\section{Stochastic dominance for infinitely divisible distributions}\label{Sec:iid}
In this section, we focus on the L\'{e}vy processes and obtain the characterization of the corresponding L\'{e}vy measures such that the stochastic dominance property in \eqref{Sup} or \eqref{Sub} holds uniformly for all time horizons.  We first establish the following result.

\begin{theorem}\label{th:main1} Let  $\{X_t\}_{t\geq 0}$ be a L\'{e}vy process with generating triplet $(0, \nu, \gamma_0)$ satisfying $\int_D|x|\nu(\d x)<\infty$. Then the following conclusions hold.
\begin{enumerate}[(i)]
    \item  The stochastic dominance in \eqref{Sup} holds for all $X_t$ with $t>0$ and all $\boldsymbol\theta\in \Delta_n$ if and only if $\nu((x^{-1},\infty))$ is subadditive on $(0,\infty)$ and $\nu((-\infty,0))=0$. Moreover, if $\nu((x^{-1},\infty))$ is strictly subadditive on $(0,\infty)$, then \eqref{Sup} holds strictly  whenever $t>0$ and  $\max_{i=1}^n\theta_i<1$.
    \item The stochastic dominance in \eqref{Sub} holds for all  $X_t$ with $t>0$  and  all  $\boldsymbol\theta,\boldsymbol\eta\in \Delta_n$ satisfying $\boldsymbol\theta \preceq\boldsymbol\eta$  if and only if $\nu((x^{-1},\infty))$ is concave on $(0,\infty)$ and $\nu((-\infty, 0))=0$. Moreover, if $\nu((x^{-1},\infty))$ is nonlinear and concave on $(0,\infty)$, then  \eqref{Sub} holds strictly whenever $t>0$ and $\boldsymbol\theta \prec\boldsymbol\eta$.
\end{enumerate}
\end{theorem}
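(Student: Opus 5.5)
Throughout, $S_t:=\sum_{i=1}^n\theta_iX_{i,t}$, where $X_{1,t},\dots,X_{n,t}$ are iid copies of the process. The plan is to reduce both parts to pointwise comparisons of Lévy tails, and then to translate those comparisons into subadditivity or concavity of $H(y):=\nu((y^{-1},\infty))$ and $G(y):=\nu((-\infty,-y^{-1}))$.

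\textbf{Step 1 (reduction to the Lévy measure of $S_t$).} By independence, $S_t$ is the time-$t$ value of a Lévy process. Its triplet is $(0,\nu_{\boldsymbol\theta},\gamma_0)$, where $\nu_{\boldsymbol\theta}=\sum_{i:\theta_i>0}\nu\circ(\theta_i^{-1}\,\cdot)$. The drift is unchanged because $\sum_i\theta_i=1$. Since $\le_{\mathrm{st}}$ is invariant under a common deterministic shift, we may take $\gamma_0=0$, so all processes are pure-jump with bounded variation. In these terms the tails of $\nu_{\boldsymbol\theta}$ are
\[
\nu_{\boldsymbol\theta}((x,\infty))=\sum_i H(\theta_i/x),\qquad \nu_{\boldsymbol\theta}((-\infty,-x))=\sum_i G(\theta_i/x),\qquad x>0 .
\]

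\textbf{Step 2 (necessity via small time).} We use the classical small-time limits $\p(X_t>x)/t\to\nu((x,\infty))$ and $\p(X_t<-x)/t\to\nu((-\infty,-x))$ for $x>0$ at continuity points. These extend to all $x$ by monotonicity and right-continuity. Applying them to both sides of \eqref{Sup} for all small $t$ gives:
\begin{itemize}
\item[(a)] on the positive side, $\sum_iH(\theta_iy)\ge H(y)$;
\item[(b)] on the negative side, $\sum_i G(\theta_iy)\le G(y)$.
\end{itemize}
Taking $\boldsymbol\theta=(a,1-a,0,\dots,0)$ in (a) gives subadditivity of $H$. Taking $\boldsymbol\theta=(1/2,1/2,0,\dots)$ in (b) gives $G(2y)\ge 2G(y)$. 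If $G(y_0)>0$ for some $y_0$, then iterating yields $G(2^ky_0)\ge 2^kG(y_0)$. This forces $\int^\infty G(y)y^{-2}\,\d y=\infty$, which is the same as $\int_{(-1,0)}|x|\,\nu(\d x)=\infty$ and contradicts bounded variation. Hence $\nu((-\infty,0))=0$.

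For (ii), the same limit applied to \eqref{Sub} gives $\sum_iH(\eta_iy)\le\sum_iH(\theta_iy)$ for all $\boldsymbol\theta\preceq\boldsymbol\eta$ and all $y$. For $n=2$, take $\boldsymbol\theta=(a,b)$ and $\boldsymbol\eta=(a-\epsilon,b+\epsilon)$ with $a\le b$, and rescale by $y$. This yields $H(u)+H(v)\ge H(u-\epsilon)+H(v+\epsilon)$ for $u\le v$. That is midpoint concavity of the monotone function $H$, hence concavity. The negative-jump part is excluded exactly as before, since (ii) implies (i).

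\textbf{Step 3 (sufficiency via coupling).} Assume $\nu$ lives on $(0,\infty)$.
\begin{itemize}
\item In (i), subadditivity and monotonicity of $H$ give $\sum_iH(\theta_iy)\ge H(\sum_i\theta_i y)=H(y)$. So $\bar\nu_{\boldsymbol\theta}(x):=\nu_{\boldsymbol\theta}((x,\infty))\ge\bar\nu(x):=\nu((x,\infty))$ for all $x>0$.
\item In (ii), concavity of $H$ makes $\boldsymbol\theta\mapsto\sum_iH(\theta_iy)$ Schur-concave by Karamata's inequality, which gives $\bar\nu_{\boldsymbol\eta}\le\bar\nu_{\boldsymbol\theta}$.
\end{itemize}
In both cases we must show the following: for two pure-jump subordinators with tails $\bar\nu_1\le\bar\nu_2$, we have $Y^{(1)}_t\le_{\mathrm{st}}Y^{(2)}_t$ for all $t$. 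Take a Poisson random measure $N$ with intensity $\d t\times\nu_2$. Keep only the atoms with jump size $x$ satisfying $\bar\nu_2(x)<\bar\nu_1(0+)$, and send each kept atom to $\phi(x):=\bar\nu_1^{-1}(\bar\nu_2(x))\le x$ (generalized inverse). This is monotone transport of Lévy tails, and it produces a Poisson random measure with intensity $\nu_1$ whose jumps are pointwise no larger than those of $N$. Summing jumps then gives $Y^{(1)}_t\le Y^{(2)}_t$ almost surely, for all $t$ simultaneously.

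\textbf{Step 4 (strictness) and the main obstacle.} For the strictness claims, strict subadditivity (respectively nonlinear concavity together with $\boldsymbol\theta\prec\boldsymbol\eta$) gives strict tail inequality at some $x$. Then the coupling moves or deletes jumps with positive probability, so the two laws differ. The main obstacle is making the coupling in Step 3 rigorous when $\nu$ has atoms or infinite mass near $0$, where the generalized inverse and the thinning must be handled carefully. A second delicate point is justifying the small-time limits at every $x>0$ in Step 2. I would first try doing everything for compound Poisson truncations $\nu|_{(\epsilon,\infty)}$ and then let $\epsilon\downarrow0$, using weak convergence and the bounded-variation assumption.
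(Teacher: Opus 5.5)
\textbf{Necessity and sufficiency.} Your necessity argument is essentially the paper's. You take small-time limits $t^{-1}\p(X_t>x)\to\nu((x,\infty))$ at continuity points (Corollary 8.9 of \cite{S99}), and extend the resulting \emph{tail inequalities}, not the limits themselves, to all $x$ by right-continuity. Subadditivity then comes from two-point weights. Midpoint concavity plus monotonicity comes from $(a,b,0,\dots,0)\preceq(a-\epsilon,b+\epsilon,0,\dots,0)$; for general $n$, pad with zeros and set $H(0):=0$. Your exclusion of negative jumps is a slightly shorter version of the paper's superadditivity/convexity argument: in (i) via $G(2y)\ge 2G(y)$, and in (ii) via $\boldsymbol\theta\preceq\boldsymbol e_1$, so that (ii) implies the hypothesis of (i).

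For sufficiency the paper builds no coupling. It invokes Lemma~\ref{Prop:comparison} (from Theorem 2.2 of \cite{ST93}): ordered L\'{e}vy tails with equal uncompensated drift give $\le_{\mathrm{st}}$. Your monotone transport of the Poisson random measure is a self-contained proof of that lemma in the one-sided case. The atom problem you flag needs no truncation. Give each jump of size $x$ an independent uniform mark $V$ on $[\bar\nu_2(x),\bar\nu_2(x-))$; this pushes $\nu_2$ onto Lebesgue measure on $(0,\bar\nu_2(0+))$. Keep the jump if $V<\bar\nu_1(0+)$ and send it to $\bar\nu_1^{-1}(V)$. This is still $\le x$, because $\bar\nu_1(x)\le\bar\nu_2(x)\le V$. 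What your route buys over the paper is a pathwise coupling in which $S_t-X_t$ is nondecreasing in $t$. That is stronger than marginal dominance and directly yields the running-maximum comparisons of Section~\ref{Sec:extension}.

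\textbf{The gap: strictness in (ii).} You assert that nonlinear concavity and $\boldsymbol\theta\prec\boldsymbol\eta$ give a strict tail inequality at some $x$, but this is exactly the nontrivial step. For a fixed $y$ one can have $\sum_iH(\theta_iy)=\sum_iH(\eta_iy)$, for instance whenever all the points $\theta_iy,\eta_iy$ lie in an interval where $H$ is affine. So Karamata's inequality alone gives nothing strict, and you must use all scalings $y$.

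The paper argues by contradiction, supposing equality holds for all $y>0$:
\begin{enumerate}[(a)]
\item Write $\boldsymbol\theta$ as a finite chain of $T$-transforms of $\boldsymbol\eta$. Schur-concavity sandwiches the intermediate values, so equality persists across one nontrivial step: $\theta'_i=\lambda\eta'_i+(1-\lambda)\eta'_j$ and $\theta'_j=(1-\lambda)\eta'_i+\lambda\eta'_j$, with $\lambda\in(0,1)$ and $\eta'_i\ne\eta'_j$.
\item Both concavity inequalities, $H(\theta'_iy)\ge\lambda H(\eta'_iy)+(1-\lambda)H(\eta'_jy)$ and its companion, must then be equalities.
\item Hence $H$ is affine on $[\min(\eta'_i,\eta'_j)y,\max(\eta'_i,\eta'_j)y]$ for every $y>0$. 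These overlapping intervals cover $(0,\infty)$, so $H$ is affine there.
\item Since $H(0+)=0$, $H$ is linear, which is the contradiction.
\end{enumerate}

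Once $\nu_{\boldsymbol\theta}\ne\nu_{\boldsymbol\eta}$, distinct laws follow at once from uniqueness of the L\'{e}vy--Khintchine triplet (Theorem 8.1 of \cite{S99}). Your coupling argument also works for that last step, since $A\le B$ a.s. with $\p(A<B)>0$ forces different laws.
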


Theorem \ref{th:main1} characterizes the L\'{e}vy measures for the validity of \eqref{Sup} and \eqref{Sub} for L\'{e}vy processes over all time horizons. In practice, the aggregated risk is modeled by a L\'{e}vy process. If the risk diversification cannot reduce the risk at any time, then the corresponding L\'{e}vy measure should satisfy the conditions in Theorem \ref{th:main1}. Conversely, if the L\'{e}vy measure satisfies the conditions in Theorem \ref{th:main1}, then risk diversification has a negative effect regardless of the time horizon considered. Hence, the conditions in Theorem \ref{th:main1} give rise to diversification properties uniformly over all time for infinitely divisible distributions. Moreover, Theorem \ref{th:main1} shows that strengthening  the subadditivity and concavity conditions on the transformed L\'{e}vy tails yields the strict stochastic dominance in \eqref{Sup} and \eqref{Sub}.

Comparing the conditions in (i) and (ii) of Theorem \ref{th:main1}, the concavity of $\nu((x^{-1},\infty))$ on $(0,\infty)$ is a stronger condition as it implies the subadditivity by noting that $\lim_{x\downarrow 0} \nu((x^{-1},\infty))=0$. Thus, \eqref{Sub} is essentially strictly stronger than \eqref{Sup}.

We  emphasize that in our assumption, negative jumps are allowed as we do not assume $\nu((-\infty,0))=0$. However, in our conclusions, \eqref{Sup} and \eqref{Sub} hold for the L\'{e}vy process only if $\nu((-\infty,0))=0$, meaning that negative jumps are not allowed. Here $\nu((-\infty,0))=0$ is not an assumption but is a part of the conclusion. From the proof of Theorem \ref{th:main1} in the Appendix, this is due to  the restriction $\int_{D}|x|\nu(\d x)<\infty$. In fact, the ``only if'' parts in Theorem \ref{th:main1} do not rely on the constraint on the L\'{e}vy measure, which is shown in the proof of Theorem \ref{th:main1}. It is displayed below as a separate result for later use.

\begin{proposition}\label{Prop:onlyif} Let  $\{X_t\}_{t\geq 0}$ be a L\'{e}vy process with generating triplet $(0, \nu, \gamma)$.  Then the following conclusions hold.
\begin{enumerate}[(i)]
    \item If \eqref{Sup} holds for all $X_t$ with $t>0$ and all $\boldsymbol\theta\in \Delta_n$, then  $\nu((x^{-1},\infty))$ is subadditive on $(0,\infty)$ and  $\nu((-\infty,x^{-1}))$ is superadditive on $(-\infty,0)$. 
    \item If \eqref{Sub} holds for all  $X_t$ with $t>0$  and  all  $\boldsymbol\theta,\boldsymbol\eta\in \Delta_n$ satisfying $\boldsymbol\theta \preceq\boldsymbol\eta$ , then $\nu((x^{-1},\infty))$ is concave on $(0,\infty)$ and $\nu((-\infty, x^{-1}))$ is convex on $(-\infty,0)$.
\end{enumerate}
\end{proposition}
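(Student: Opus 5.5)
The plan is a small-time argument. The key tool is the classical short-time asymptotics of Lévy processes: for a Lévy process with triplet $(0,\nu,\gamma)$ and any $x>0$ that is a continuity point of $\nu$,
$$\lim_{t\downarrow 0}\frac{1}{t}\p(X_t>x)=\nu((x,\infty)),\qquad \lim_{t\downarrow 0}\frac{1}{t}\p(X_t<-x)=\nu((-\infty,-x)).$$
This holds because $X_t/t$-scaled laws converge vaguely to $\nu$ on $\R\setminus\{0\}$; see \cite{S99}, Corollary 8.9. By monotonicity, the identity extends to one-sided limits at every $x>0$.

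The weighted sum $S_t:=\sum_i\theta_iX^{(i)}_t$ of iid copies is itself a Lévy process in $t$. Its Lévy measure is $\nu_{\boldsymbol\theta}=\sum_{i}\nu\circ(\theta_i\cdot)^{-1}$, over those $i$ with $\theta_i>0$. Hence
$$\nu_{\boldsymbol\theta}((x,\infty))=\sum_i \nu((x/\theta_i,\infty)),\qquad \nu_{\boldsymbol\theta}((-\infty,-x))=\sum_i\nu((-\infty,-x/\theta_i)).$$
The Gaussian part stays zero, and the drift plays no role in the tail limits.

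For (i), write $\bar\nu(y):=\nu((y^{-1},\infty))$. Dominance \eqref{Sup} at every $t$ gives $\p(X_t>x)\le\p(S_t>x)$. Dividing by $t$ and letting $t\downarrow0$ yields $\nu((x,\infty))\le\sum_i\nu((x/\theta_i,\infty))$, first at continuity points and then at all $x$ by right-continuity. Take $n=2$ weights (padding with zeros handles general $n$): $\theta_1=a/(a+b)$, $\theta_2=b/(a+b)$ with $a,b>0$. Set $x=1/(a+b)$. The inequality becomes $\bar\nu(a+b)\le\bar\nu(a)+\bar\nu(b)$, which is subadditivity.

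The negative side works the same way. From $\p(X_t<-x)\ge\p(S_t<-x)$ we get $\nu((-\infty,-x))\ge\sum_i\nu((-\infty,-x/\theta_i))$. Writing $y=-a$, $z=-b$ with $a,b>0$, this is exactly superadditivity of $\nu((-\infty,x^{-1}))$ on $(-\infty,0)$.

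For (ii), the same limit applied to \eqref{Sub} gives $\Phi(\boldsymbol\eta)\le\Phi(\boldsymbol\theta)$ whenever $\boldsymbol\theta\preceq\boldsymbol\eta$, where $\Phi(\boldsymbol\theta):=\sum_i\bar\nu(\theta_i/x)$. So $\Phi$ is Schur-concave on $\Delta_n$ for each scale $x>0$. Rescaling by $x$ yields, for any $u,v>0$ and $\lambda\in[0,1]$, the comparison of $(u,v)$ with $(\lambda u+(1-\lambda)v,(1-\lambda)u+\lambda v)$. Taking $n=2$ gives $\bar\nu(p)+\bar\nu(q)\ge\bar\nu(u)+\bar\nu(v)$ whenever $(p,q)\preceq(u,v)$ with $p+q=u+v$. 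Hence $\bar\nu$ is midpoint concave on $(0,\infty)$, taking $p=q=(u+v)/2$. Since $\bar\nu$ is monotone, hence measurable, midpoint concavity upgrades to concavity. The negative tail gives the reversed inequality, i.e.\ convexity of $\nu((-\infty,x^{-1}))$ on $(-\infty,0)$, with the monotonicity argument again handling regularity.

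The main obstacle is making the small-time limit rigorous at every point rather than only at continuity points of $\nu$. The continuity points of $\nu$'s tail are co-countable, so the inequalities hold on a dense set of $x$. I would then pass to all $x$ using right-continuity of $y\mapsto\nu((y,\infty))$, which transfers to the transformed tails via monotone limits. Some care is needed because the points $x/\theta_i$ must simultaneously be continuity points; since only countably many $x$ are excluded for each $\theta_i$, a common dense set exists.
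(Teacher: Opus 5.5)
Your proposal is correct and follows essentially the same route as the paper. The steps match: the small-time limit from Corollary 8.9 of \cite{S99} at the co-countably many $x>0$ that avoid atoms of $\nu$ and of its rescalings; extension to all $x>0$ by right-continuity of the tails; specialization to two nonzero weights to read off subadditivity and superadditivity; and, for (ii), the comparison $(\tfrac12,\tfrac12,0,\dots,0)\preceq(\lambda,1-\lambda,0,\dots,0)$, which gives midpoint concavity (resp.\ convexity) that monotonicity of the transformed tails upgrades to concavity (resp.\ convexity). One caveat on your remark that the limit identity "extends to one-sided limits at every $x>0$": the limit $t^{-1}\p(X_t>x)\to\nu((x,\infty))$ itself can fail at an atom of $\nu$. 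For example, a compound Poisson process with positive drift and unit jumps gives the limit $\nu([1,\infty))$ at $x=1$. Your argument never uses the limit there, since you reach such $x$ only through the inequality and right-continuity.
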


Moreover, Theorem \ref{th:main1} also provides many examples for infinitely divisible random variables with infinite mean that do not satisfy \eqref{Sup} or \eqref{Sub}.
\begin{corollary}\label{Cor:diversification} Let $\nu$ be a L\'{e}vy measure satisfying $\int_D|x|\nu(\d x)<\infty$.  
The following two statements are equivalent.
\begin{enumerate}[(i)]
    \item  The L\'{e}vy measure satisfies $\nu((-\infty,0))>0$, or $\nu((x^{-1},\infty))$ is not subadditive on $(0,\infty)$;
    \item  The infinitely divisible random variable with  triplet $(0, t_0\nu,\gamma_0)$ does not satisfy \eqref{Sup} for some $t_0>0$ and some $\boldsymbol\theta\in \Delta_n$.
\end{enumerate}
Moreover, the following two statements are also equivalent.
\begin{enumerate}[(i)]
    \item  The L\'{e}vy measure satisfies $\nu((-\infty,0))>0$, or $\nu((x^{-1},\infty))$ is not concave on $(0,\infty)$;
    \item The infinitely divisible random variable with  triplet $(0, t_0\nu,\gamma_0)$ does not satisfy \eqref{Sub} for some $t_0>0$ and some $\boldsymbol\theta,\boldsymbol\eta\in \Delta_n$ satisfying $\boldsymbol\theta \preceq\boldsymbol\eta$. 
\end{enumerate}    
\end{corollary}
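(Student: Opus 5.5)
The corollary is the contrapositive of Theorem \ref{th:main1}, applied to the rescaled family of L\'{e}vy measures. Fix $\nu$ with $\int_D|x|\nu(\d x)<\infty$ and $\gamma_0\in\R$. Let $\{X_t\}_{t\ge0}$ be the L\'{e}vy process with triplet $(0,\nu,\gamma_0)$. By the L\'{e}vy--Khintchine formula, $\E e^{izX_t}=\hat\mu_{X_1}(z)^t$, so $X_t$ has triplet $(0,t\nu,t\gamma_0)$.

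The first step is to reconcile the drift. The corollary speaks of the triplet $(0,t_0\nu,\gamma_0)$, not $(0,t_0\nu,t_0\gamma_0)$. Both \eqref{Sup} and \eqref{Sub} are invariant under adding a constant $c$ to $X$. Indeed, $\sum_i\theta_i(X_i+c)=\sum_i\theta_iX_i+c$ because $\sum_i\theta_i=1$, and likewise for $\boldsymbol\eta$. Hence the infinitely divisible variable with triplet $(0,t_0\nu,\gamma_0)$ satisfies \eqref{Sup} (resp.\ \eqref{Sub}) for a given $\boldsymbol\theta$ (resp.\ $\boldsymbol\theta\preceq\boldsymbol\eta$) if and only if $X_{t_0}$ does, since the two differ by the constant $\gamma_0(1-t_0)$. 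In what follows, statement (ii) can therefore be read with $X_{t_0}$ in place of that variable.

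Next I apply Theorem \ref{th:main1}(i). Condition (i) of the first equivalence is exactly the negation of ``$\nu((x^{-1},\infty))$ is subadditive on $(0,\infty)$ and $\nu((-\infty,0))=0$''. Statement (ii) is exactly the negation of ``\eqref{Sup} holds for all $X_t$, $t>0$, and all $\boldsymbol\theta\in\Delta_n$''. Theorem \ref{th:main1}(i) asserts that these two un-negated statements are equivalent, so their negations are equivalent as well. The second equivalence follows in the same way from Theorem \ref{th:main1}(ii), with concavity in place of subadditivity and \eqref{Sub} over pairs $\boldsymbol\theta\preceq\boldsymbol\eta$ in place of \eqref{Sup}.

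No step is a real obstacle. The only point needing care is the drift bookkeeping above, namely checking that the $t_0$-scaling of $\gamma_0$ does not matter. This is settled by translation invariance, which uses only $\sum_i\theta_i=\sum_i\eta_i=1$. It is also worth noting in the writeup that the integrability hypothesis $\int_D|x|\nu(\d x)<\infty$ is exactly what Theorem \ref{th:main1} needs, so the direction (i)$\Rightarrow$(ii) indeed uses the full theorem rather than only Proposition \ref{Prop:onlyif}.
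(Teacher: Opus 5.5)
Your proposal is correct and matches the paper, which gives no separate proof and treats the corollary as the contrapositive of Theorem~\ref{th:main1}: the ``only if'' part, including the integrability step that forces $\nu((-\infty,0))=0$, gives (i)$\Rightarrow$(ii), and the ``if'' part gives (ii)$\Rightarrow$(i). Your translation-invariance argument reconciles the drift $\gamma_0$ in the corollary with the drift $t_0\gamma_0$ of $X_{t_0}$ using $\sum_i\theta_i=\sum_i\eta_i=1$; the paper leaves this detail implicit, and you handle it correctly.
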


By Remark 25.14 of \cite{S99}, for $t>0$, $\p(X_t>x)\sim t\nu((x,\infty))$ as $x\to\infty$ if $\nu((-\infty,0))=0$, $\int_D |x|\nu(\d x)<\infty$, and $(\nu((1,\infty)))^{-1}\nu((1,x))$ is subexponential. For any subexponential random variable $Y$ satisfying \eqref{Sup} for all $\boldsymbol\theta\in\Delta_n$, there exists an infinitely divisible random variable $X$ with triplet $(0,\nu,0)$ such that $\nu((x,\infty))\sim \p(Y>x)$ as $x\to\infty$. Hence, $\p(X>x)\sim \p(Y>x)$ as $x\to\infty$. Since the L\'{e}vy measure can be modified on bounded sets without affecting this tail equivalence, in light of Corollary \ref{Cor:diversification}, we can choose $\nu$ such that \eqref{Sup} fails for $X$ and some $\boldsymbol\theta\in\Delta_n$. Thus, tail behavior alone does not determine the non-diversification phenomenon; the structure of the distribution, as encoded by its L\'{e}vy measure, also plays an essential role.

 We next check whether $|X|$ has an infinite mean under the conditions of Theorem \ref{th:main1}. To avoid trivial cases, we suppose $\nu((0,\infty))>0$.
\begin{proposition}\label{Prop:0}
Suppose $X$ is an infinitely divisible random variable  with generating triplet $(0, \nu, \gamma)$ satisfying  $\nu((-\infty,0))=0$ and $\nu((0,\infty))>0$. If $\nu((x^{-1},\infty))$ is subadditive or concave on $(0,\infty)$, we have $\mathbb E(|X|)=\infty$.   
\end{proposition}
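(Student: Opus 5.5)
Write $g(x)=\nu((x^{-1},\infty))$ for $x>0$, so that $\nu((y,\infty))=g(1/y)$ for $y>0$. The plan is to show that the one-sided Lévy measure has an infinite first moment away from the origin, $\int_{(1,\infty)} y\,\nu(\d y)=\infty$. Theorem 25.3 of \cite{S99} then gives $\E(|X|)=\infty$, since for an infinitely divisible law $\E|X|<\infty$ if and only if $\int_{|y|>1}|y|\,\nu(\d y)<\infty$.

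Because concavity of $g$ on $(0,\infty)$, together with $g(0+)=0$, implies subadditivity (as remarked after Theorem \ref{th:main1}), it suffices to treat the subadditive case. Subadditivity iterated gives $g(kx)\le k\,g(x)$ for every integer $k\ge1$ and $x>0$. Since $\nu((0,\infty))>0$, there is some $y_0>0$ with $\nu((y_0,\infty))=c>0$, i.e. $g(x_0)=c$ with $x_0=1/y_0$. Applying $g(x_0)\le k\,g(x_0/k)$ yields
\[
\nu((k y_0,\infty)) = g(x_0/k)\ \ge\ \frac{c}{k},\qquad k\ge1 .
\]
So the tail $\nu((y,\infty))$ decays no faster than order $1/y$ along the sequence $y=ky_0$. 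Monotonicity of the tail extends this to all large $y$: for $y\in[ky_0,(k+1)y_0)$ one has $\nu((y,\infty))\ge \nu(((k+1)y_0,\infty))\ge c/(k+1)\ge c' /y$ for a constant $c'>0$.

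Next compute $\int_{(1,\infty)} y\,\nu(\d y)=\nu((1,\infty))+\int_1^\infty \nu((y,\infty))\,\d y$ by Fubini/integration by parts. The second term is bounded below by $\int_{M}^\infty c'/y\,\d y=\infty$. Hence the first moment of $\nu$ on $(1,\infty)$ diverges, and the moment criterion gives $\E|X|=\infty$. Equivalently one could argue directly: $\E X_+=\infty$ while $X_-$ has finite mean because there are no negative jumps. The Gaussian and drift components are absent or irrelevant here, since the triplet has $A=0$.

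The main obstacle is minor: making sure $g$ is finite where used. Since $\nu((y,\infty))<\infty$ for every $y>0$, this is automatic. The other point to handle is the bookkeeping step passing from the discrete bound at $ky_0$ to an integral lower bound, which is where monotonicity of the tail is used.
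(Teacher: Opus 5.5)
Your proof is correct and follows essentially the same route as the paper: you use Theorem 25.3 of \cite{S99} to reduce $\E|X|=\infty$ to divergence of $\int_1^\infty \nu((y,\infty))\,\d y$, and then use subadditivity in the form $g(kx)\le k\,g(x)$ to obtain a lower bound of order $1/y$ on the tail. The only difference is cosmetic: you anchor the bound at an arbitrary $y_0$ with $\nu((y_0,\infty))>0$, whereas the paper first shows $h>0$ on $(0,\infty)$ and then anchors at $h(1)$.
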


 Proposition \ref{Prop:0} implies that any nontrivial infinitely divisible distribution satisfying the conditions in Theorem \ref{th:main1} necessarily has an infinite mean. This is consistent with the conclusions in Proposition 2 of \cite{CEW25} and Proposition 3.2 of \cite{ALO25}, both of which state that \eqref{Sup} only holds for $X$ with infinite mean if  $X$ is not a constant and $\max_{i=1}^n\theta_i<1$.


In practical applications, it is also important to identify conditions under which \eqref{Sup} and \eqref{Sub} hold strongly strictly, indicating that diversification leads to a strict deterioration in the sense of first-order stochastic dominance.  We next investigate this problem  when the L\'{e}vy measure has a density. 
\begin{proposition}\label{Prop:2} Let  $\{X_t\}_{t\geq 0}$ be a L\'{e}vy process with generating triplet $(0, \nu, \gamma_0)$ satisfying $\int_D|x|\nu(\d x)<\infty$ and $\nu((-\infty,0))=0$. Moreover, suppose $\nu(\d x)=x^{-1}g(x^{-1})\d x$ for a nonnegative $g$ on $(0,\infty)$.
\begin{enumerate}[(i)]
\item If $g$ is subadditive on $(0,\infty)$, then  \eqref{Sup} holds for all $X_t$ with $t>0$ and all  $\boldsymbol\theta\in \Delta_n$. Moreover, if $g$ is strictly subadditive on $(0,\infty)$, then \eqref{Sup} holds strongly strictly whenever $t>0$ and $\max_{i=1}^n\theta_i<1$.
\item If $g$ is concave on $(0,\infty)$, then \eqref{Sub} holds  for all $X_t$ with $t>0$ and all   $\boldsymbol\theta,\boldsymbol\eta\in \Delta_n$ satisfying $\boldsymbol\theta \preceq\boldsymbol\eta$. Moreover, if  $g$ is strictly concave on $(0,\infty)$, then \eqref{Sub} holds strongly strictly whenever $t>0$ and $\boldsymbol\theta \prec\boldsymbol\eta$.
\end{enumerate}
    \end{proposition}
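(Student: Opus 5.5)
The plan is to compare Lévy measures directly, rather than going through the transformed tails of Theorem~\ref{th:main1}. For fixed $t>0$ and $\boldsymbol\theta\in\Delta_n$, the variable $S_{\boldsymbol\theta}:=\sum_i\theta_iX_{i,t}$ (independent copies of $X_t$) is infinitely divisible with triplet $(0,t\nu_{\boldsymbol\theta},\gamma_0 t)$. The drift is $\gamma_0 t$ because $\sum_i\theta_i=1$, and $\nu_{\boldsymbol\theta}(B)=\sum_{i:\theta_i>0}\nu(\theta_i^{-1}B)$.

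\textbf{Density of the pushed-forward measure.} A change of variables gives the density of $\nu_{\boldsymbol\theta}$ on $(0,\infty)$:
\[
\frac{\nu_{\boldsymbol\theta}(\d x)}{\d x}=\sum_{i:\theta_i>0}\theta_i^{-1}\Big(\frac{x}{\theta_i}\Big)^{-1}g\Big(\frac{\theta_i}{x}\Big)=x^{-1}\sum_{i:\theta_i>0}g\Big(\frac{\theta_i}{x}\Big).
\]
Thus the whole problem reduces to pointwise inequalities between the functions $s\mapsto\sum_i g(\theta_i s)$ for different weight vectors, evaluated at $s=1/x$. Also $\int_D x\,\nu_{\boldsymbol\theta}(\d x)<\infty$, since each $\nu(\theta_i^{-1}\cdot)$ inherits this property.

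\textbf{Part (i).} Iterating subadditivity of the nonnegative function $g$ gives $g(s)\le\sum_{i:\theta_i>0}g(\theta_is)$ for every $s>0$, so $\nu_{\boldsymbol\theta}=\nu+\rho$ with $\rho\ge0$ a measure on $(0,\infty)$ satisfying $\int_D x\rho(\d x)<\infty$.
\begin{itemize}
\item Let $Z_t$ be independent of $X_t$ with triplet $(0,t\rho,0)$ in the $\gamma_0$-parametrization. Then $Z_t\ge0$ a.s., since it is a driftless subordinator evaluated at time $t$.
\item By uniqueness of the triplet, $S_{\boldsymbol\theta}\overset{d}{=}X_t+Z_t\ge_{\mathrm{a.s.}}X_t$, which gives \eqref{Sup}.
\item Under strict subadditivity with $\max_i\theta_i<1$, at least two $\theta_i$ are positive. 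Then the density of $\rho$, namely $x^{-1}\big[\sum_i g(\theta_i/x)-g(1/x)\big]$, is strictly positive on all of $(0,\infty)$.
\item Consequently $\rho$ has unbounded support, and $\p(Z_t>u)>0$ for every $u\ge0$. This is standard for compound Poisson or subordinator distributions whose Lévy measure charges every neighbourhood of infinity.
\item For $x\in(\essinf X_t,\esssup X_t)$, condition on $X_t$ and use independence:
\[
F_{X_t}(x)-F_{S_{\boldsymbol\theta}}(x)=\p(X_t\le x<X_t+Z_t)=\int_{(-\infty,x]}\p(Z_t>x-y)\,F_{X_t}(\d y)>0,
\]
because $\p(X_t\le x)>0$. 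This is the strongly strict statement.
\end{itemize}

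\textbf{Part (ii).} The same decomposition applies with $\nu_{\boldsymbol\theta}=\nu_{\boldsymbol\eta}+\rho$. The required pointwise inequality $\sum_i g(\theta_is)\ge\sum_i g(\eta_is)$ for $\boldsymbol\theta\preceq\boldsymbol\eta$ is Karamata's inequality, i.e.\ Schur-concavity of $\mathbf u\mapsto\sum_i\tilde g(u_i)$, where $\tilde g$ is concave.
\begin{itemize}
\item The one technical point is zero weights, where the term $g(0\cdot s)$ must count as $0$ because that coordinate contributes no jumps. I would extend $g$ by $\tilde g(0):=0$.
\item This extension remains concave on $[0,\infty)$: a nonnegative concave $g$ on $(0,\infty)$ has $g(0+)\ge0$, and lowering the value at an endpoint preserves concavity.
\item With $\tilde g$ concave, Karamata gives $\rho\ge0$ and hence \eqref{Sub}, since $\sum_i\theta_iX_{i,t}\overset{d}{=}\sum_i\eta_iX_{i,t}+Z_t$.
\item If $g$ is strictly concave and $\boldsymbol\theta\prec\boldsymbol\eta$, then $\tilde g$ is strictly concave on $[0,\infty)$. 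Karamata is then strict for every $s>0$, because $(\theta_is)_i\prec(\eta_is)_i$ is not a permutation. So the density of $\rho$ is again positive everywhere, and the strongly strict argument from part (i) carries over verbatim.
\end{itemize}

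\textbf{Main obstacle.} I expect the delicate steps to be the strict and strongly strict parts. Two things must be checked there: that the strict functional inequality indeed yields an everywhere-positive density for $\rho$, and hence $\p(Z_t>u)>0$ for all $u$; and that the zero-weight boundary cases are handled correctly when invoking Karamata.
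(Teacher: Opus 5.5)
Your proposal is correct and follows the same route as the paper's proof: the change of variables giving the density $x^{-1}\sum_{\theta_i>0}g(\theta_i/x)$, pointwise domination by subadditivity or Karamata, writing the diversified sum as the undiversified one plus an independent nonnegative infinitely divisible variable whose L\'{e}vy density is everywhere positive, and the conditioning argument for strong strictness. The differences are minor: you obtain the non-strict claims directly from this coupling, whereas the paper verifies subadditivity/concavity of $\nu((x^{-1},\infty))$ and invokes Theorem~\ref{th:main1}, and you handle zero weights by setting $\tilde g(0)=0$ instead of the paper's choice $g(0)=g(0+)$ together with the count $k_2\ge k_1$.
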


It follows from the proof of Proposition \ref{Prop:2} that $\nu(\d x)=x^{-1}g(x^{-1})\d x$ with a nonnegative and subadditive $g$ on $(0,\infty)$ is a stronger condition than the subadditivity of $\nu((x^{-1},\infty))$ on $(0,\infty)$, while $\nu(\d x)=x^{-1}g(x^{-1})\d x$ with a nonnegative and concave $g$ on $(0,\infty)$ is a stronger condition than the concavity of $\nu((x^{-1},\infty))$ on $(0,\infty)$. However, those two stronger conditions are easier to understand and to be checked. More importantly, we can obtain the strong strictness of \eqref{Sup} and \eqref{Sub} under the strict subadditivity or concavity of $g$.

In Proposition \ref{Prop:2}, the L\'{e}vy measure has the form $\nu(\d x)=x^{-1}g(x^{-1})\d x$ on $(0,\infty)$. If $g$ is subadditive or concave, then \eqref{Sup} or \eqref{Sub} holds. As $g$ is nonnegative,  the  concavity of $g$ on $(0,\infty)$ ensures that $g$ is increasing on $(0,\infty)$, which further implies that $g(x^{-1})$ is decreasing on $(0,\infty)$.  This means that the infinitely divisible distributions considered in (ii) of Proposition \ref{Prop:2} belong to the class of self-decomposable distributions (also called L class), which is an extension of the stable distributions; see, e.g., Section 15 of \cite{S99} for the definitions, properties and characterization of the self-decomposable distributions. If $g(x)=xh(x),~x\in (0,\infty)$, where $h$ is decreasing on $(0,\infty)$, then $g$ is subadditive. Using this fact, we can construct many nonnegative and subadditive $g$ that is not increasing. For instance,  $g(x)=xe^{-x},~x\in(0,\infty)$. This suggests that the ID distributions considered in (i) of Proposition \ref{Prop:2} may not belong to the L class. This subtle difference also reflects that  the requirement in \eqref{Sub} is  materially stronger than \eqref{Sup}.

Let us now explore risk diversification for L\'{e}vy processes when $\int_{D}|x|\nu(\d x)=\infty$. We say a L\'{e}vy process $\{X_t\}_{t\geq 0}$ is symmetric if $X_t\overset{d}{=}-X_t$ for all $t>0$.  A   L\'{e}vy process with triplet $(0,\nu,\gamma)$ is symmetric if and only if  $\gamma=0$ and the L\'{e}vy measure $\nu$ is symmetric, i.e., $\nu(B)=\nu(-B)$ for all $B\in\mathcal B(\R)$.  The symmetric Cauchy distribution with parameter $c>0$ is given by
$$\mu(\d x)=\pi^{-1}c(x^2+c^2)^{-1}\d x,~x\in\R$$
with characteristic function $\hat{\mu}(z)=e^{-c|z|},~z\in\R.$
We obtain a characterization of symmetric 1-stable processes in the following result.
\begin{theorem}\label{Prop:S} Let  $\{X_t\}_{t\geq 0}$ be a symmetric L\'{e}vy process with generating triplet $(0, \nu, 0)$. Then we have the following conclusions.
\begin{enumerate}[(i)]
    \item  The stochastic dominance in \eqref{Sup} holds for all $X_t$ with $t>0$ and all $\boldsymbol\theta\in \Delta_n$ if and only if $\nu(\d x)=c|x|^{-2}\d x$ on $\R\setminus\{0\}$ for some $c\geq 0$.
    \item The stochastic dominance in \eqref{Sub} holds for all  $X_t$ with $t>0$  and  all  $\boldsymbol\theta,\boldsymbol\eta\in \Delta_n$ satisfying $\boldsymbol\theta \preceq\boldsymbol\eta$  if and only if $\nu(\d x)=c|x|^{-2}\d x$ on $\R\setminus\{0\}$ for some $c\geq 0$.
\end{enumerate}
\end{theorem}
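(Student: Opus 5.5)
The plan has three steps. The ``if'' direction comes from the symmetric Cauchy case. The ``only if'' direction for (i) uses Proposition \ref{Prop:onlyif}(i) together with symmetry of $\nu$ to force the tail to be linear. Part (ii) then follows from part (i).

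\emph{Sufficiency.} Suppose $\nu(\d x)=c|x|^{-2}\d x$. If $c=0$, then $X_t=0$ and everything is trivial. If $c>0$, then $X_t$ is symmetric Cauchy with characteristic function $e^{-\pi c t|z|}$. For iid copies, the characteristic function of $\sum_i\theta_iX_{t,i}$ is $\prod_i e^{-\pi ct\theta_i|z|}=e^{-\pi ct|z|}$ for every $\boldsymbol\theta\in\Delta_n$. Hence $\sum_i\theta_iX_{t,i}\overset{d}{=}X_t$, and \eqref{Sup} and \eqref{Sub} hold with equality in distribution.

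\emph{Necessity for (i).} Assume \eqref{Sup} holds for all $t>0$ and all $\boldsymbol\theta$. By Proposition \ref{Prop:onlyif}(i), which does not require $\int_D|x|\nu(\d x)<\infty$, the function $f(x):=\nu((x^{-1},\infty))$ is subadditive on $(0,\infty)$. Likewise $h(x):=\nu((-\infty,x^{-1}))$ is superadditive on $(-\infty,0)$.

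Symmetry of $\nu$ gives $h(-y)=\nu((-\infty,-y^{-1}))=\nu((y^{-1},\infty))=f(y)$ for $y>0$. Superadditivity of $h$ on $(-\infty,0)$ therefore translates into $f(x+y)\ge f(x)+f(y)$ for all $x,y>0$. Combined with subadditivity, $f$ is additive on $(0,\infty)$, so it satisfies Cauchy's equation there.

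Now $f$ is nonnegative and monotone (nondecreasing in $x$), and a monotone additive function is linear. Hence $f(x)=\kappa x$ for some $\kappa\ge0$, that is, $\nu((y,\infty))=\kappa/y$ for all $y>0$. This says the density is $\kappa y^{-2}$ on $(0,\infty)$. By symmetry, $\nu(\d x)=\kappa|x|^{-2}\d x$ on $\R\setminus\{0\}$, and we take $c=\kappa$.

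\emph{Necessity for (ii).} As noted after the definitions in the paper, \eqref{Sub} for all majorization-ordered pairs implies \eqref{Sup} for all $\boldsymbol\theta$: take $\boldsymbol\eta$ to be a unit vector, which majorizes every $\boldsymbol\theta\in\Delta_n$. Part (i) then gives the conclusion. Alternatively, Proposition \ref{Prop:onlyif}(ii) yields that $f$ is both concave and convex, hence affine, with $f(0+)=0$; this again gives the same form.

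\emph{Main obstacle.} The only delicate point is making sure Proposition \ref{Prop:onlyif} applies in the present setting. It is stated for a general triplet $(0,\nu,\gamma)$ with $\gamma$ arbitrary, so it applies here with $\gamma=0$, and no finite-variation assumption is needed. The remaining care is in the algebra converting $h$ into $f$, checking that the sign of $x^{-1}$ maps the interval $(-\infty,0)$ onto the tail $(-\infty,-y^{-1})$ correctly.
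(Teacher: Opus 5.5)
Your proof is correct and follows essentially the same route as the paper. Proposition \ref{Prop:onlyif} combined with the symmetry of $\nu$ makes $\nu((x^{-1},\infty))$ both subadditive and superadditive, hence linear, and sufficiency follows from the Cauchy characteristic function $e^{-\pi ct|z|}$, which gives equality in distribution. Your reduction of (ii) to (i) via $\boldsymbol\theta\preceq\boldsymbol e_1$ is a clean way to make precise the paper's remark that the proof of (ii) is the same as that of (i).
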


Theorem \ref{Prop:S} reveals a strong rigidity within the class of symmetric L\'{e}vy processes: the symmetric 1-stable (Cauchy) process is the only nontrivial symmetric L\'{e}vy process exhibiting either phenomenon uniformly over all time horizons.  Based on Cauchy distributions,  \cite{M25} introduced the family of super-Cauchy distributions and showed that those distributions satisfy the stochastic dominance in \eqref{Sup}. The Cauchy distribution is a symmetric $\alpha$-stable distribution with $\alpha=1$. By direct verification, \cite{IB05} (see also \cite{M25} or \cite{CHSZ26}) showed that other 1-stable distributions with positive skewness parameters also satisfy the stochastic dominance in \eqref{Sup} and \eqref{Sub}. Moreover, \cite{IB09} studied the majorization order for Value-at-Risk with confidence levels above $1/2$ of the convex combination of iid symmetric random variables following the convolution of symmetric $\alpha$-stable distributions. The results in \cite{IB09} imply that the stochastic dominance in \eqref{Sup} and \eqref{Sub} do not hold for those special classes of symmetric infinitely divisible distributions, consistent with our Theorem \ref{Prop:S}.

 Applying Theorem \ref{th:main1} and Proposition \ref{Prop:2}, we can recover some results in the literature and also construct many new examples.
\begin{example}\label{Exam:1}
\begin{enumerate}[(i)]
    \item   The L\'{e}vy measure of a positive one-sided $\alpha$-stable distribution is given by  $\nu(\d x)=cx^{-1-\alpha}\d x$ on $(0,\infty)$ for some $c>0$ and $\alpha\in (0,1)$.  Note that the corresponding $g(x)=cx^{\alpha}$. It follows from Proposition \ref{Prop:2} that \eqref{Sup} and \eqref{Sub} hold strongly strictly whenever $\alpha\in (0,1)$. By Theorem \ref{Prop:S}, \eqref{Sup} and \eqref{Sub} hold for the symmetric $1$-stable distribution.  These recover the well-known result on stochastic dominance in \cite{IB05}.
\item Consider the infinitely divisible distributions with L\'{e}vy measure supported on $(0,\infty)$ given by $\nu(\d x)=\sum_{i=1}^nc_i x^{-1-\alpha_i} (\log(x^{-1}+1))^{\beta_i}\d x$ on $(0,\infty)$ for $\alpha_i\in (0,1)$ and $\beta_i\in [0,1]$ satisfying $\alpha_i+\beta_i\leq 1$. It follows from Proposition \ref{Prop:2} that \eqref{Sup} and \eqref{Sub} hold strongly strictly. If $\beta_i=0$, then it corresponds to the convolution of $n$ positive one-sided $\alpha$-stable distributions with heterogeneous $\alpha_i$.
\item More examples can be constructed using Proposition \ref{Prop:2} by setting $\nu(\d x)=x^{-1}g(x^{-1})\d x$ on $(0,\infty)$ and $\nu((-\infty,0))=0$ for a nonnegative concave function $g$ satisfying $\int_1^\infty x^{-2}g(x)\d x<\infty$. If $g(x)=x(1+x)^{-1}$, then $\nu(\d x)=(x(1+x))^{-1}\d x$ on $(0,\infty)$; if  $g(x)=1-e^{-x}$, then $\nu(\d x)=x^{-1}(1-e^{-x^{-1}})\d x$ on $(0,\infty)$. The corresponding infinitely divisible distributions satisfy \eqref{Sup} and \eqref{Sub} strongly strictly.
\end{enumerate}
    
\end{example}
Next, we consider a special case of L\'{e}vy processes, i.e. $\nu(\R)<\infty$. A compound Poisson process is defined by
\begin{align}\label{CP}
X_t=\sum_{i=1}^{N(t)} Y_i,~t\geq 0,
\end{align}
 where $Y_i$ are iid copies of $Y$ and are independent of the Poisson process $N(t)$ with parameter $\lambda>0$. Compound Poisson processes have been extensively studied in the literature and  applied to modeling the aggregate loss of an institution or a line of business over time in insurance and finance. Here $N(t)$ represents the number of losses over the time period $[0,t]$, while $Y_i$ represents the severity of the i-th loss. Compound Poisson processes are also among the most important examples of L\'{e}vy processes. The characteristic function is given by
$$\hat{\mu}_{X_t}(z)=\exp[t\lambda(\hat{\mu}_Y(z)-1)]=\exp\left[t\lambda\left(\int_\R (e^{izx}-1)\mu_Y(\d x)\right)\right],~z\in\R,t\geq 0.$$
Hence, the L\'{e}vy measure of $X_t$ is $t\lambda\mu_Y$, while the L\'{e}vy measure associated with the compound Poisson process is $\lambda\mu_Y$.

Applying Theorem \ref{th:main1}, we immediately arrive at the following stochastic dominance results for compound Poisson distributions.
\begin{proposition}\label{Prop:3}  Let $\{X_t\}_{t\geq 0}$ be a compound Poisson process defined by \eqref{CP}. Then the following conclusions hold.
\begin{enumerate}[(i)]
\item The stochastic dominance in \eqref{Sup} holds for all $X_t$ with $t>0$ and all $\boldsymbol\theta\in \Delta_n$ if and only if $1-F_Y(x^{-1})$ is subadditive on  $(0,\infty)$ and $F_Y(0-)=0$. Moreover, if $1-F_Y(x^{-1})$ is strictly subadditive on $(0,\infty)$, then \eqref{Sup} holds strictly  whenever $t>0$ and  $\max_{i=1}^n\theta_i<1$.
\item  The stochastic dominance in \eqref{Sub} holds for all $X_t$ with $t>0$ and  all  $\boldsymbol\theta,\boldsymbol\eta\in \Delta_n$ satisfying $\boldsymbol\theta \preceq\boldsymbol\eta$  if and only if $1-F_Y(x^{-1})$ is concave on  $(0,\infty)$ and $F_Y(0-)=0$. Moreover, if $1-F_Y(x^{-1})$ is nonlinear and concave on $(0,\infty)$, then  \eqref{Sub} holds strictly whenever $t>0$ and $\boldsymbol\theta \prec\boldsymbol\eta$.
\end{enumerate}
\end{proposition}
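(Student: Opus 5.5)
The plan is to derive Proposition \ref{Prop:3} directly from Theorem \ref{th:main1} by identifying the generating triplet of the compound Poisson process $\{X_t\}_{t\geq 0}$ in \eqref{CP}.

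From the characteristic function displayed above,
$$\hat{\mu}_{X_t}(z)=\exp\left[t\int_\R (e^{izx}-1)\,\lambda\mu_Y(\d x)\right].$$
One subtlety is that $\mu_Y$ may charge $\{0\}$, while a L\'{e}vy measure must satisfy $\nu(\{0\})=0$. The integrand vanishes at $x=0$, so I would set $\nu:=\lambda\mu_Y$ restricted to $\R\setminus\{0\}$ without changing the characteristic function.

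This $\nu$ is finite, so $\int_D|x|\nu(\d x)\le\nu(\R)<\infty$. Hence $\{X_t\}$ is a L\'{e}vy process with triplet $(0,\nu,0)$ in the $\gamma_0$-parametrization, that is, $\gamma_0=0$. Therefore the hypotheses of Theorem \ref{th:main1} are met.

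Next I translate the conditions of Theorem \ref{th:main1}.

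\emph{Negative jumps.} We have $\nu((-\infty,0))=\lambda\,\p(Y<0)=\lambda F_Y(0-)$. Since $\lambda>0$, the condition $\nu((-\infty,0))=0$ is equivalent to $F_Y(0-)=0$.

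\emph{Transformed tail.} For $x>0$ we have $x^{-1}>0$, so the atom at $0$ plays no role, and
$$\nu((x^{-1},\infty))=\lambda\,\p(Y>x^{-1})=\lambda\bigl(1-F_Y(x^{-1})\bigr).$$
Multiplying by the positive constant $\lambda$ preserves subadditivity, strict subadditivity, concavity and nonlinearity. So each condition in Theorem \ref{th:main1}(i) and (ii) is equivalent to the corresponding condition on $1-F_Y(x^{-1})$ on $(0,\infty)$.

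Substituting these equivalences into Theorem \ref{th:main1} gives both the ``if and only if'' statements and the strictness statements of (i) and (ii). I expect no real obstacle here. The only care points are handling the possible atom of $Y$ at $0$ when defining $\nu$, and noting that the finiteness of $\nu$ guarantees the integrability condition of Theorem \ref{th:main1}.
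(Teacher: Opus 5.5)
Your proposal is correct and follows the paper's route exactly: the paper obtains Proposition \ref{Prop:3} by applying Theorem \ref{th:main1} with L\'{e}vy measure $\lambda\mu_Y$ and zero uncompensated drift, and then reads $\nu((-\infty,0))=\lambda F_Y(0-)$ and $\nu((x^{-1},\infty))=\lambda(1-F_Y(x^{-1}))$. Your restriction of $\lambda\mu_Y$ to $\R\setminus\{0\}$ to remove a possible atom of $Y$ at zero is a small bit of care that the paper glosses over, and it does not affect any of the conditions.
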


Note that the conclusions in  Proposition \ref{Prop:3} recover and extend Theorems 3-4 of \cite{CHSZ26}, where $F_Y(0)=0$ and the continuity of $F_Y$ are assumed.  An important difference is that we neither assume $Y$ is supported on $(0,\infty)$ nor the continuity of $F_Y$ over $(0,\infty)$. Instead,  $F_Y(0-)=0$ and continuity of $F_Y$ over $(0,\infty)$ appear as part of the conclusion in Proposition \ref{Prop:3}. Note that the continuity of $F_Y$ over $(0,\infty)$ is implied by the subadditivity of $1-F_{Y}(x^{-1})$ and the fact that $\lim_{x\downarrow 0}1-F_{Y}(x^{-1})=0$.   This means that if $Y$ takes negative values with  positive probability or $F_Y$ is not continuous over $\R\setminus\{0\}$, then the stochastic dominance in \eqref{Sup} and \eqref{Sub} do not hold for the corresponding compound Poisson random variable  $X_t$ for some $t>0$.
Moreover, Proposition \ref{Prop:3} provides sufficient conditions for the strict stochastic dominance in \eqref{Sup} and \eqref{Sub}.  The strong strictness of \eqref{Sup} and \eqref{Sub} for compound Poisson processes  can also be obtained from Proposition \ref{Prop:2}.
In addition, Proposition \ref{Prop:3} allows $F_Y(0)>0$, which is important in applications, as  it allows for a positive probability of zero loss. This is quite common in practice, for example, in insurance claims.  

It is worth mentioning that the condition for $Y$ in (i) of Proposition \ref{Prop:3}  coincides with the condition for $X$ in a recent paper \cite{ALO25} to study the stochastic dominance of \eqref{Sup}.  \cite{ALO25} show that if $1-F_X(x^{-1})$ is subadditive on $(0,\infty)$ with $F_X(0)=0$, then  \eqref{Sup} holds for all  $\boldsymbol\theta\in \Delta_n$. Those distributions are said to be \emph{inverted subadditive} (\emph{InvSub}).  Moreover, the distributions for $Y$ in (ii) of Proposition \ref{Prop:3} are said to be \emph{inverted concave} if $F_Y(0)=0$ in \cite{CHSZ26}, which is a stronger condition than the InvSub. Some Pareto, Fr\'{e}chet, and inverse-Gamma distributions are inverted concave; see \cite{CHSZ26} and \cite{ALO25} for more examples for these two classes of distributions.  


We next see some concrete examples satisfying the condition in Proposition \ref{Prop:3}. More examples can be found in e.g., \cite{ALO25} and \cite{CHSZ26}, and the references therein.
\begin{example}  Let $X_1,\dots,X_n$ be iid copies of $X$ defined by \eqref{CP} and suppose $Y$ is supported on $(0,\infty)$.
    \begin{enumerate}[(i)]
    \item If $Y$ has a super-Pareto distribution (see \cite{CEW25}), i.e., there exists an increasing, convex and non-constant function $f$ such that $Y\overset{d}{=}f(Z)$, where $Z\sim$ Pareto(1), then \eqref{Sup}  holds whenever $f(1)\geq 0$. If $Y\sim$ Pareto($\alpha$), then \eqref{Sup} holds whenever $\alpha\in (0,1]$.
    \item If $Y$ has a Fr\'{e}chet distribution $F(x)=e^{-x^{-\alpha}},x>0$ with $\alpha\in (0,1]$, then it is inverted concave. Hence both \eqref{Sup} and \eqref{Sub} hold. 
    \item If $Y$ has the inverse-Gamma distribution with density $f(x)=\beta^{\alpha}x^{-\alpha-1}e^{-\beta x^{-1}}/\Gamma(\alpha),~x>0$, $\alpha\in (0,1]$ and $\beta>0$, then it is inverted concave. Hence, both \eqref{Sup} and \eqref{Sub} hold.
    \end{enumerate}
\end{example}


\section{Multivariate infinitely divisible distributions}\label{Sec:multivariate}
In this section, we characterize the L\'{e}vy measures of  multidimensional L\'{e}vy processes exhibiting the adverse diversification phenomena. We first consider multidimensional L\'{e}vy processes with bounded variation sample paths and then investigate three important classes for which  more explicit characterizations can be obtained: multidimensional symmetric L\'{e}vy processes, multidimensional $\alpha$-stable processes and multidimensional compound Poisson processes.

Let $\{\mathbf X_t\}_{t\geq 0}$  be an $n$-dimensional L\'{e}vy process with generating triplet $(0,\nu,\boldsymbol\gamma_0)$
satisfying  $\int_{\R^n}\|\mathbf x\|^2\wedge 1\nu(\d \mathbf x)<\infty$. In this section, we assume $\boldsymbol \gamma_0=(\gamma_0,\dots,\gamma_0)$ such that all convex combinations have the same drift term.
For a multidimensional infinitely divisible random vector $\mathbf X=(X_1,\dots,X_n)$, we shall investigate 
\begin{align}\label{Supm}
  X_1\leq_{\mathrm{st}} \theta_1X_1+\dots+\theta_nX_n 
\end{align}
for $\boldsymbol\theta\in \Delta_n$
and
\begin{align}\label{Subm}
    \sum_{i=1}^n\eta_iX_i\leq_{\mathrm{st}} \sum_{i=1}^n\theta_iX_i
\end{align}
for  $\boldsymbol\theta,\boldsymbol\eta\in \Delta_n$ satisfying $\boldsymbol\theta \preceq\boldsymbol\eta$ .
Note that here $X_i,i\in[n]$ need not be independent or identically distributed. For $\boldsymbol\theta\in \Delta_n$,  denote by $T_{\boldsymbol\theta}\nu(B)=\nu(\{\mathbf x\in\R^n: \sum_{i=1}^n\theta_ix_i\in B\})$ for $B\in\mathcal B(\R\setminus\{0\})$. For $i\in [n]$, let  $\boldsymbol e_i$ be the $i$-th canonical basis vector of
$\mathbb R^n$.
\subsection{Multidimensional L\'{e}vy processes with bounded variation}
In this subsection, we consider the multidimensional L\'{e}vy processes with bounded variation sample paths, i.e., $\int_{\R^n}\|\mathbf x\|\wedge 1\nu(\d \mathbf x)<\infty$.
As a multidimensional counterpart of Theorem \ref{th:main1}, we next characterize the L\'{e}vy measures for the validity of \eqref{Supm} and \eqref{Subm} for  L\'{e}vy processes with bounded variation.
\begin{proposition}\label{th:main2} Let $\{\mathbf X_t\}_{t\geq 0}$ be an $n$-dimensional L\'{e}vy process with generating triplet $(0, \nu, \boldsymbol\gamma_0)$ satisfying $\int_{\R^n}\|\mathbf x\|\wedge 1\nu(\d \mathbf x)<\infty$.  Then the following conclusions hold.
\begin{enumerate}[(i)]
\item Fix $\boldsymbol\theta\in\Delta_n$. The stochastic dominance in \eqref{Supm} holds for all $\mathbf X_t$ with $t>0$  if and only if $\nu(\{\mathbf x\in\R^n: \sum_{i=1}^n\theta_ix_i>u\})\geq \nu(\{\mathbf x\in\R^n: x_1>u\})$ on $(0,\infty)$ and $\nu(\{\mathbf x\in\R^n: \sum_{i=1}^n\theta_ix_i<u\})\leq \nu(\{\mathbf x\in\R^n: x_1<u\})$ on $(-\infty,0)$. Moreover, if $T_{\boldsymbol\theta}\nu\neq T_{e_1}\nu$, then \eqref{Supm} holds strictly whenever $t>0.$
\item Fix  $\boldsymbol\theta,\boldsymbol\eta\in \Delta_n$ satisfying $\boldsymbol\theta \preceq\boldsymbol\eta$ . The stochastic dominance in \eqref{Subm} holds for all $\mathbf X_t$ with $t>0$ if and only if  $\nu(\{\mathbf x\in\R^n: \sum_{i=1}^n\theta_ix_i>u\})\geq \nu(\{\mathbf x\in\R^n: \sum_{i=1}^n\eta_ix_i>u\})$ on $(0,\infty)$ and $\nu(\{\mathbf x\in\R^n: \sum_{i=1}^n\theta_ix_i<u\})\leq \nu(\{\mathbf x\in\R^n: \sum_{i=1}^n\eta_ix_i<u\})$ on $(-\infty,0)$. Moreover, if $T_{\boldsymbol\theta}\nu\neq T_{\boldsymbol\eta}\nu$, then \eqref{Subm} holds strictly whenever $t>0.$
\end{enumerate}
\end{proposition}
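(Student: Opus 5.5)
The plan is to reduce the multivariate statement to a comparison of two univariate L\'{e}vy processes with bounded variation and the same drift. For any $\boldsymbol\theta\in\Delta_n$, the projection $S^{\boldsymbol\theta}_t:=\sum_{i=1}^n\theta_iX_{t,i}$ is a one-dimensional L\'{e}vy process. Since $\int(\|\mathbf x\|\wedge 1)\nu(\d\mathbf x)<\infty$ and $\boldsymbol\gamma_0=(\gamma_0,\dots,\gamma_0)$, its characteristic exponent is
\[
t\Big(i\gamma_0 z+\int_{\R}(e^{izy}-1)\,T_{\boldsymbol\theta}\nu(\d y)\Big),
\]
because $\sum_i\theta_i\gamma_0=\gamma_0$. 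Here $T_{\boldsymbol\theta}\nu$, restricted to $\R\setminus\{0\}$, is a L\'{e}vy measure with $\int(|y|\wedge1)T_{\boldsymbol\theta}\nu(\d y)<\infty$; this uses $|\sum\theta_ix_i|\le\|\mathbf x\|_\infty\le\|\mathbf x\|$. Mass of $\nu$ sent to $0$ contributes nothing. Thus \eqref{Supm} and \eqref{Subm} both become: for two univariate bounded-variation L\'{e}vy processes $U,V$ with common drift $\gamma_0$ and L\'{e}vy measures $\nu_U=T_{\boldsymbol\eta}\nu$, $\nu_V=T_{\boldsymbol\theta}\nu$, we have $U_t\le_{\mathrm{st}}V_t$ for all $t>0$ iff $\bar\nu_U(u)\le\bar\nu_V(u)$ for $u>0$ and $\nu_U((-\infty,u))\ge\nu_V((-\infty,u))$ for $u<0$. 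Part (i) is the case $\boldsymbol\eta=\boldsymbol e_1$. So I prove this lemma once.

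\emph{Sufficiency.} The tail conditions say that the positive parts satisfy $\nu_U^+\le\nu_V^+$ in the sense of tails, and the negative parts satisfy $\nu_V^-$ below $\nu_U^-$ in the same sense. I would build a coupling by quantile transforms. For the positive jumps of $U$, take a Poisson random measure on $(0,\infty)$ with intensity $\nu_U^+$ and map each atom $y$ to $\bar\nu_V^{-1}(\bar\nu_U(y))$, using a generalized inverse. This gives a Poisson random measure with intensity $\nu_V^+$ restricted to a top portion, with each atom moved up. The leftover mass of $\nu_V^+$ is an independent Poisson random measure with nonnegative atoms, and adding it only increases the sum. Negative jumps are handled symmetrically: each atom of $U$ is mapped to a larger or equal negative jump of $V$, and the excess negative mass belongs to $U$, so dropping it also increases. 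Summing atoms, which is legitimate under bounded variation, gives $U_t\le V_t$ a.s. for every $t$. A cleaner route, if the coupling gets technical, is compound Poisson truncation: restrict both measures to $\{|y|>\epsilon\}$, couple the compound Poisson sums as above, and pass to the limit $\epsilon\downarrow0$. The truncated processes converge in probability by bounded variation, and $\le_{\mathrm{st}}$ is closed under weak limits.

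\emph{Necessity.} Use the small-time asymptotics $\p(U_t>u)/t\to\bar\nu_U(u)$ as $t\downarrow0$ for continuity points $u>0$ of $\bar\nu_U$, and similarly for the left tails (Corollary 8.9 of Sato). Proposition \ref{Prop:onlyif} already relies on this in the paper. It gives the inequalities at common continuity points, and right-continuity extends them to all $u>0$; the left tails are treated the same way.

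\emph{Strictness.} If $T_{\boldsymbol\theta}\nu\neq T_{\boldsymbol\eta}\nu$, then one of the tail inequalities is strict at some $u$, so the coupling has either a strictly raised atom or extra mass with positive probability. To conclude that $F_U\ne F_V$ I would argue through characteristic functions: equal laws at time $t$ force equal L\'{e}vy measures by uniqueness of the triplet, which contradicts $T_{\boldsymbol\theta}\nu\ne T_{\boldsymbol\eta}\nu$. This step is easy.

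The main obstacle is making the coupling rigorous when the L\'{e}vy measures have atoms or when $\bar\nu_V$ does not match $\bar\nu_U$ in range. The truncation-plus-limit argument is the fallback I would try first.
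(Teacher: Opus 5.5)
Your proposal is correct, and its skeleton matches the paper's proof:
\begin{itemize}
\item reduce to the one-dimensional projections, which are L\'{e}vy processes with uncompensated triplet $(0,T_{\boldsymbol\theta}\nu,\gamma_0)$;
\item get necessity from the small-time limit in Corollary 8.9 of \cite{S99} at continuity points, then extend by one-sided continuity of the tails (right-continuity for $\nu((u,\infty))$, left-continuity for $\nu((-\infty,u))$);
\item get strictness from uniqueness of the generating triplet.
\end{itemize}

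The one genuine difference is sufficiency. The paper simply invokes Lemma~\ref{Prop:comparison}, which splits each projection into independent positive-jump and negative-jump parts and applies Theorem~2.2 of \cite{ST93} to each. You instead build the comparison by an explicit coupling.

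As you suspected, the specific map $y\mapsto\bar\nu_V^{-1}(\bar\nu_U(y))$ breaks when $\nu_U$ has atoms. All the mass of an atom is sent to a single point, so the image need not be a sub-measure of $\nu_V^+$, and the ``leftover mass'' step fails. Truncating to $\{|y|>\epsilon\}$ does not remove atoms, so your fallback does not fix this on its own.

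The standard repair disposes of both your worries at once. Drive both processes by one Poisson random measure on $[0,\infty)\times(0,\infty)$ with Lebesgue intensity, and map each point $(r,s)$ to a jump of size $G_U(s)$, respectively $G_V(s)$, where $G_U(s)=\inf\{y>0:\bar\nu_U(y)\le s\}$ and $G_V$ is defined analogously.
\begin{itemize}
\item Right-continuity of $\bar\nu_U$ gives $\{s>0:G_U(s)>y\}=(0,\bar\nu_U(y))$. Hence $G_U$ pushes Lebesgue measure forward to $\nu_U^+$; points with $s\ge\bar\nu_U(0+)$ produce no jump.
\item The tail inequality $\bar\nu_U\le\bar\nu_V$ gives $G_U\le G_V$ pointwise.
\item Treat the reflected negative tails the same way with an independent Poisson random measure; there the inequality runs the other way.
\item Bounded variation makes every jump sum absolutely convergent, so no limiting argument is needed.
\end{itemize}

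Compared with the paper's citation, this route is longer but self-contained, and it proves more. It gives $U_s\le V_s$ for all $s\ge 0$ simultaneously on one probability space, with $V-U$ nondecreasing. Because a L\'{e}vy process is determined in law by its triplet, this process-level dominance would give the first inequality of Proposition~\ref{Prop:ruin} directly. You would not need the discretization argument via Lemma~\ref{le:extended} that the paper uses there.
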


Unlike Theorem \ref{th:main1},  the conditions on the L\'{e}vy measures given in Proposition \ref{th:main2} look complicated due to the complexity  and generality of the multivariate L\'{e}vy measures. However, Proposition \ref{th:main2} is indeed very useful as a starting point to obtain more explicit characterizations of L\'{e}vy measures when we study some specific cases. Note that the constraint  $\int_{\R^n}\|\mathbf x\|\wedge 1\nu(\d \mathbf x)<\infty$ is not needed for the ``only if'' parts of Proposition \ref{th:main2}. This is important when we investigate general L\'{e}vy processes. 
\subsection{Symmetric L\'{e}vy processes}
In this subsection, we consider symmetric multidimensional L\'{e}vy processes, i.e., $\mathbf X_t\overset{d}{=}-\mathbf X_t$ for $t>0$,  without the constraint  $\int_{\R^n}\|\mathbf x\|\wedge 1\nu(\d \mathbf x)<\infty$. We first arrive at the following characterization result.

\begin{theorem}\label{prop:symmetric} Let $\{\mathbf X_t\}_{t\geq 0}$ be an $n$-dimensional symmetric L\'{e}vy process with generating triplet $(0, \nu, \boldsymbol0)$.  Then the following statements are equivalent.
\begin{enumerate}[(i)]
\item  The stochastic dominance in \eqref{Supm} holds for all $\mathbf X_t$ with $t>0$ and all $\boldsymbol\theta\in\Delta_n$; 
\item The stochastic dominance in \eqref{Subm} holds for all $\mathbf X_t$ with $t>0$ and all   $\boldsymbol\theta,\boldsymbol\eta\in \Delta_n$ satisfying $\boldsymbol\theta \preceq\boldsymbol\eta$;
\item The L\'{e}vy measure satisfies $T_{\boldsymbol\theta}\nu=T_{\boldsymbol\eta}\nu$ for all $\boldsymbol\theta, \boldsymbol\eta\in \Delta_n$.
\end{enumerate}
\end{theorem}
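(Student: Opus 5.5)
We prove (iii)$\Rightarrow$(ii), then (ii)$\Rightarrow$(i), then (i)$\Rightarrow$(iii).

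\emph{(iii)$\Rightarrow$(ii).} For any $\boldsymbol\theta\in\Delta_n$ and $t>0$, the projection $\sum_i\theta_iX_{i,t}=\langle\boldsymbol\theta,\mathbf X_t\rangle$ is a one-dimensional L\'{e}vy process. Its characteristic function is obtained by evaluating the multivariate one at $\mathbf z=s\boldsymbol\theta$. Because the process is symmetric, the exponent is
$$t\int_{\R^n}\bigl(\cos(s\langle\boldsymbol\theta,\mathbf x\rangle)-1\bigr)\nu(\d\mathbf x)=t\int_{\R\setminus\{0\}}(\cos(sy)-1)\,T_{\boldsymbol\theta}\nu(\d y).$$
Since $\cos(sy)-1=0$ at $y=0$, the mass of $\nu$ on $\{\langle\boldsymbol\theta,\mathbf x\rangle=0\}$ plays no role. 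There is no drift and no Gaussian part. Hence the law of $\langle\boldsymbol\theta,\mathbf X_t\rangle$ is determined by $T_{\boldsymbol\theta}\nu$ alone. Under (iii), all convex combinations therefore have the same distribution, and \eqref{Subm} holds with equality in law.

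\emph{(ii)$\Rightarrow$(i).} Every $\boldsymbol\theta\in\Delta_n$ satisfies $\boldsymbol\theta\preceq\boldsymbol e_1$. Taking $\boldsymbol\eta=\boldsymbol e_1$ in \eqref{Subm} gives exactly \eqref{Supm}.

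\emph{(i)$\Rightarrow$(iii)}, the main step. Fix $\boldsymbol\theta$. By symmetry, $\langle\boldsymbol\theta,\mathbf X_t\rangle\overset d=-\langle\boldsymbol\theta,\mathbf X_t\rangle$ and $X_{1,t}\overset d=-X_{1,t}$. So (i) also gives the reversed dominance $-X_{1,t}\leq_{\mathrm{st}}-\langle\boldsymbol\theta,\mathbf X_t\rangle$, which is $\langle\boldsymbol\theta,\mathbf X_t\rangle\leq_{\mathrm{st}}X_{1,t}$. Combined with (i), the two laws coincide for every $t>0$. Equal laws for every $t$ (indeed for one $t$) give equal characteristic exponents. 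By uniqueness of the L\'{e}vy--Khintchine triplet for one-dimensional infinitely divisible laws, the restrictions of $T_{\boldsymbol\theta}\nu$ and $T_{\boldsymbol e_1}\nu$ to $\R\setminus\{0\}$ coincide. Since $\boldsymbol\theta$ is arbitrary, $T_{\boldsymbol\theta}\nu=T_{\boldsymbol\eta}\nu$ for all $\boldsymbol\theta,\boldsymbol\eta\in\Delta_n$.

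The steps that need care are technical rather than conceptual. First, one must check that the pushforward $T_{\boldsymbol\theta}\nu$ restricted to $\R\setminus\{0\}$ is a genuine L\'{e}vy measure, which follows from $|\langle\boldsymbol\theta,\mathbf x\rangle|\le\|\mathbf x\|$. Second, the truncated drift of the projection vanishes by symmetry. Here one could instead invoke the ``only if'' part of Proposition~\ref{th:main2}, which does not need bounded variation. It yields both tail inequalities, and symmetry turns them into equalities of tails on $(0,\infty)$ and $(-\infty,0)$. This route avoids the drift bookkeeping altogether.
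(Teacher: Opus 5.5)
Your proof is correct. Its skeleton matches the paper's. The step (iii)$\Rightarrow$(ii) is the same characteristic-function computation: you use the cosine form directly, while the paper rewrites the truncation indicator and removes the correction term by symmetry of $\nu$. The step (ii)$\Rightarrow$(i) via $\boldsymbol\theta\preceq\boldsymbol e_1$ is what the paper calls obvious.

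The genuine difference is in (i)$\Rightarrow$(iii). The paper takes the route you mention only as an alternative. It invokes the ``only if'' part of Proposition~\ref{th:main2}, i.e.\ the small-time limits $t^{-1}\p(\langle\boldsymbol\theta,\mathbf X_t\rangle>u)\to T_{\boldsymbol\theta}\nu((u,\infty))$ from Corollary 8.9 of \cite{S99}, together with a right-continuity argument off a countable exceptional set. This gives one-sided tail inequalities between $T_{\boldsymbol\theta}\nu$ and $T_{\boldsymbol e_1}\nu$, and the symmetry of $\nu$ then reverses them. You instead apply symmetry at the level of laws: dominance in both directions gives $\langle\boldsymbol\theta,\mathbf X_t\rangle\overset{d}{=}X_{1,t}$, and you conclude by uniqueness of the L\'{e}vy--Khintchine triplet.

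Your argument is shorter and avoids the small-time asymptotics altogether. As you note, it also needs (i) at only one horizon $t_0>0$. It therefore shows that, for symmetric processes, dominance at a single time already forces projection invariance, hence equality in law of all convex combinations at every time. That is a slightly stronger conclusion than the theorem states. The paper's route genuinely needs dominance along $t\downarrow 0$, but it reuses the general tail inequalities that also drive the non-symmetric results, which is why it fits the paper's framework.
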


Theorem \ref{prop:symmetric} indicates that the symmetric  L\'{e}vy processes are very special from the perspective of  risk diversification. First, for symmetric L\'{e}vy processes, the validity of \eqref{Supm} and \eqref{Subm} is equivalent. Moreover,  the validity of \eqref{Supm} and \eqref{Subm} requires that $\sum_{i=1}^n\theta_iX_t^{(i)}\overset{d}{=}\sum_{i=1}^n\eta_iX_t^{(i)}$ for all $\boldsymbol\theta, \boldsymbol\eta\in \Delta_n$. Thus, within the symmetric class, uniform adverse diversification is extremely restrictive: the stochastic dominance relations in \eqref{Supm} and \eqref{Subm} necessarily reduce to equality in distribution, meaning that risk diversification has no effect on the distribution of the aggregate risk.  Here we do not require $\int_{\R^n}\|\mathbf x\|\wedge 1\nu(\d \mathbf x)<\infty$. Hence, the conclusion is valid for all symmetric L\'{e}vy processes. Moreover, if $X_t^{(i)},~i\in[n]$ are iid, then it reduces to the setting of Theorem \ref{Prop:S}; in the non-degenerate case,  $X_t^{(i)}$ follows a symmetric Cauchy distribution. The property $T_{\boldsymbol\theta}\nu=T_{\boldsymbol\eta}\nu$ for all $\boldsymbol\theta, \boldsymbol\eta\in \Delta_n$ can be referred to as  \emph{projection invariance}. Moreover, the L\'{e}vy measure in Theorem \ref{prop:symmetric} is also symmetric, as implied by the symmetry of $\mathbf X_t$ for $t>0$.  Hence, the relevant L\'{e}vy measures are symmetric and  projection-invariant. In what follows, we will apply Theorem \ref{prop:symmetric} to multidimensional symmetric 1-stable processes.

Next, we consider  multivariate $\alpha$-stable distributions. For a multivariate $\alpha$-stable distribution with $\alpha\in (0,2)$, the L\'{e}vy measure is given by \begin{align}\label{stablev}\nu(B)=\int_{S_n}\int_0^\infty r^{-1-\alpha}\id_B(r\boldsymbol \xi)\d r\rho(\d \boldsymbol \xi)
\end{align}
for $B\in\mathcal{B}(\R^n)$, where $S_n$ represents the unit sphere in $\R^n$ and $\rho$ is a finite measure on $S_n$ called the spectral measure; see e.g., Theorem 14.3 of \cite{S99}. Note that $\nu$ and $\rho$ are mutually determined.  We denote by $S_n^+$ the portion of $S_n$ lying in the positive orthant and $S_n^-$ the portion of $S_n$ lying in the negative orthant.

Applying Theorem \ref{prop:symmetric}, we obtain the following result for  symmetric 1-stable processes, and the general $\alpha$-stable processes will be considered in the next subsection.
\begin{proposition}\label{Prop:SM1} Let $\{\mathbf X_t\}_{t\geq 0}$ be an $n$-dimensional $1$-stable process with generating triplet $(0, \nu, \mathbf 0)$ with $\nu$ given by \eqref{stablev}. Suppose the process is symmetric.  Then the following statements are equivalent.
\begin{enumerate}[(i)]
\item  The stochastic dominance in \eqref{Supm}  holds for all $\mathbf X_t$ with $t>0$ and all $\boldsymbol\theta\in\Delta_n$; 
\item The stochastic dominance in \eqref{Subm} holds for all $\mathbf X_t$ with $t>0$ and all  $\boldsymbol\theta,\boldsymbol\eta\in \Delta_n$ with $\boldsymbol\theta \preceq\boldsymbol\eta$;
\item The spectral measure $\rho$ is symmetric and supported on $\R_+^n\cup\R_-^n$, and  $\int_{S_n}(\xi_i)_+\rho(\d \boldsymbol\xi), \int_{S_n}(\xi_i)_-\rho(\d \boldsymbol\xi),~i\in[n]$ are all equal.
\end{enumerate}
\end{proposition}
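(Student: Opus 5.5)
By Theorem \ref{prop:symmetric}, statements (i) and (ii) are each equivalent to projection invariance, $T_{\boldsymbol\theta}\nu=T_{\boldsymbol\eta}\nu$ for all $\boldsymbol\theta,\boldsymbol\eta\in\Delta_n$. So the whole proof reduces to showing that, for a symmetric $1$-stable L\'evy measure of the form \eqref{stablev}, projection invariance is equivalent to (iii). I would first record that symmetry of the process is equivalent to symmetry of $\nu$. Since $\nu$ and $\rho$ determine each other through \eqref{stablev}, this is in turn equivalent to symmetry of $\rho$, i.e.\ $\rho(B)=\rho(-B)$. Hence the symmetry of $\rho$ in (iii) is automatic under the standing assumption.

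\emph{Step 1: explicit projected tails.} For $\boldsymbol\theta\in\Delta_n$ and $u>0$, I would insert \eqref{stablev} with $\alpha=1$ and integrate out $r$:
\begin{align*}
T_{\boldsymbol\theta}\nu((u,\infty))=\int_{S_n}\int_0^\infty r^{-2}\id_{\{r\langle\boldsymbol\theta,\boldsymbol\xi\rangle>u\}}\d r\,\rho(\d\boldsymbol\xi)=u^{-1}\int_{S_n}\langle\boldsymbol\theta,\boldsymbol\xi\rangle_+\rho(\d\boldsymbol\xi)=:u^{-1}f(\boldsymbol\theta).
\end{align*}
Similarly, $T_{\boldsymbol\theta}\nu((-\infty,-u))=u^{-1}\int_{S_n}\langle\boldsymbol\theta,\boldsymbol\xi\rangle_-\rho(\d\boldsymbol\xi)$, and this equals $u^{-1}f(\boldsymbol\theta)$ by the symmetry of $\rho$. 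Each $T_{\boldsymbol\theta}\nu$ is a measure on $\R\setminus\{0\}$ determined by these two tail functions. Therefore projection invariance holds if and only if $f$ is constant on $\Delta_n$.

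\emph{Step 2: constancy of $f$ is equivalent to (iii).} The key observation is that $a\mapsto a_+$ is convex. Hence for every $\boldsymbol\xi$ and every $\boldsymbol\theta\in\Delta_n$,
\begin{align*}
\Big(\sum_{i=1}^n\theta_i\xi_i\Big)_+\le\sum_{i=1}^n\theta_i(\xi_i)_+,
\end{align*}
which integrates to $f(\boldsymbol\theta)\le\sum_i\theta_if(\boldsymbol e_i)$.

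For the direction (iii) $\Rightarrow$ projection invariance: if $\rho$ is supported on $\R_+^n\cup\R_-^n$, the inequality above is an equality $\rho$-a.e. Then $f(\boldsymbol\theta)=\sum_i\theta_i\int(\xi_i)_+\rho(\d\boldsymbol\xi)$, and this is constant on $\Delta_n$ once all the integrals $\int(\xi_i)_+\rho(\d\boldsymbol\xi)$ are equal.

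For the converse, suppose $f\equiv c$ on $\Delta_n$. Evaluating at the vertices gives $f(\boldsymbol e_i)=\int(\xi_i)_+\rho(\d\boldsymbol\xi)=c$ for all $i$. By symmetry of $\rho$, also $\int(\xi_i)_-\rho(\d\boldsymbol\xi)=c$ for all $i$, which gives the equal-integrals part of (iii). Next I would take the barycenter $\bar{\boldsymbol\theta}=(1/n,\dots,1/n)$. Then $f(\bar{\boldsymbol\theta})=c=\sum_i\bar\theta_if(\boldsymbol e_i)$, so the nonnegative integrand $\sum_i\bar\theta_i(\xi_i)_+-\big(\sum_i\bar\theta_i\xi_i\big)_+$ has zero $\rho$-integral and vanishes $\rho$-a.e.

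\emph{Step 3: the equality case (main obstacle).} The remaining point is the elementary characterization of equality in the convexity inequality. With all weights strictly positive, $\big(\sum_i\bar\theta_i\xi_i\big)_+=\sum_i\bar\theta_i(\xi_i)_+$ holds if and only if the $\xi_i$ do not take strictly opposite signs. Indeed, if some $\xi_j>0>\xi_k$, the negative part strictly reduces $\sum_i\bar\theta_i\xi_i$ below $\sum_i\bar\theta_i(\xi_i)_+$, and the inequality becomes strict. Hence $\rho$ is concentrated on $\{\boldsymbol\xi:\xi_i\ge0\ \forall i\}\cup\{\boldsymbol\xi:\xi_i\le0\ \forall i\}$, i.e.\ on $\R_+^n\cup\R_-^n$.

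I expect this step to need the most care. It must be carried out at an interior point of $\Delta_n$ so that no coordinate is killed by a zero weight, and the conclusion must be stated as a $\rho$-a.e.\ support statement. Combining Steps 1–3 with Theorem \ref{prop:symmetric} gives (i)$\Leftrightarrow$(ii)$\Leftrightarrow$(iii).
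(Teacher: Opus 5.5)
Your proposal is correct and follows essentially the same route as the paper. Both arguments reduce via Theorem~\ref{prop:symmetric} to projection invariance and compute the projected tails as $u^{-1}\int_{S_n}\langle\boldsymbol\theta,\boldsymbol\xi\rangle_\pm\,\rho(\d\boldsymbol\xi)$. They then read off the equal-integrals condition at the vertices $\boldsymbol e_i$, and use the equality case of the convexity inequality for $a\mapsto a_+$ at a strictly positive weight vector to force $\rho$ onto $S_n^+\cup S_n^-$. The only cosmetic difference is that you fold the negative tail into $f$ via the symmetry of $\rho$ from the start, whereas the paper treats the two tails separately and invokes symmetry afterwards.
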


Proposition \ref{Prop:SM1} shows that  multidimensional symmetric 1-stable processes  exhibit the non-diversification phenomenon or the reverse diversification order if and only if the spectral measure is supported on the positive and negative orthants and the integrals of positive parts and negative parts under the spectral measure are identical across all components regardless of the dependence among those components.
\subsection{Multidimensional $\alpha$-stable processes}
In this subsection, we focus on the multidimensional $\alpha$-stable processes.
 We say a measure $\rho$ on $S_n$ is \emph{permutation-invariant} if $\rho(T_{\sigma}(B))=\rho(B)$ for all permutation $\sigma$, where $B\in \mathcal B(S_n)$ and $T_{\sigma}(\mathbf x)=(x_{\sigma(1)},\dots,x_{\sigma(n)})$ with $\sigma$ ranging over all permutations of $(1,\dots,n)$.
Inspired by Theorem \ref{th:main1}, we suppose $\nu$ is supported on the positive orthant in the following result.

\begin{theorem}\label{Thm:4} Let $\{\mathbf X_t\}_{t\geq 0}$ be an $n$-dimensional $\alpha$-stable process with  $\alpha\in (0,1)$ and generating triplet $(0, \nu, \boldsymbol\gamma_0)$ with $\nu$ given by \eqref{stablev} satisfying $\nu(\R^n\setminus\R_+^n)=0$. Then the following conclusions hold.
\begin{enumerate}[(i)]
\item  The stochastic dominance in \eqref{Supm} holds for all $\mathbf X_t$ with $t>0$ and $\boldsymbol\theta\in\Delta_n$ if and only if $\min_{i\in [n]}\int_{S_n^+}\xi_i^\alpha\rho(\d \boldsymbol\xi)$ $=\int_{S_n^+}\xi_1^\alpha\rho(\d \boldsymbol\xi)$. 
\item The stochastic dominance in \eqref{Subm} holds for all $\mathbf X_t$ with $t>0$ and    $\boldsymbol\theta,\boldsymbol\eta\in \Delta_n$ with $\boldsymbol\theta \preceq\boldsymbol\eta$ if and only if $\rho$ is permutation-invariant. 
\end{enumerate}
\end{theorem}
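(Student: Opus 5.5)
The plan is to reduce everything to Proposition \ref{th:main2}; its hypotheses hold because $\alpha\in(0,1)$ gives $\int_{\R^n}\|\mathbf x\|\wedge 1\,\nu(\d\mathbf x)<\infty$ and all components share the drift $\gamma_0$. Since $\nu$ lives on $\R_+^n$, every projection $\sum_i\theta_ix_i$ is nonnegative $\nu$-a.e. The conditions on $(-\infty,0)$ in Proposition \ref{th:main2} are then trivial (both sides vanish), and only the tail conditions on $(0,\infty)$ remain. These tails are explicit. Using \eqref{stablev}, for $u>0$ and any $\mathbf w\in\Delta_n$,
$$\nu\Big(\Big\{\mathbf x:\sum_{i=1}^n w_ix_i>u\Big\}\Big)=\int_{S_n^+}\int_0^\infty r^{-1-\alpha}\id\{r\langle\mathbf w,\boldsymbol\xi\rangle>u\}\,\d r\,\rho(\d\boldsymbol\xi)=\frac{u^{-\alpha}}{\alpha}\,\Phi(\mathbf w),$$
where $\Phi(\mathbf w):=\int_{S_n^+}\langle\mathbf w,\boldsymbol\xi\rangle^\alpha\rho(\d\boldsymbol\xi)$. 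So, by Proposition \ref{th:main2}, \eqref{Supm} holds for all $t$ and a given $\boldsymbol\theta$ iff $\Phi(\boldsymbol\theta)\ge\Phi(\boldsymbol e_1)$, and \eqref{Subm} holds for a given pair $\boldsymbol\theta\preceq\boldsymbol\eta$ iff $\Phi(\boldsymbol\theta)\ge\Phi(\boldsymbol\eta)$.

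\emph{Part (i).} Since $t\mapsto t^\alpha$ is concave and the map is linear in $\mathbf w$, $\Phi$ is concave on $\Delta_n$. Its minimum over the simplex is therefore attained at an extreme point $\boldsymbol e_i$, and $\Phi(\boldsymbol e_i)=\int_{S_n^+}\xi_i^\alpha\rho(\d\boldsymbol\xi)$. Hence $\Phi(\boldsymbol\theta)\ge\Phi(\boldsymbol e_1)$ for all $\boldsymbol\theta\in\Delta_n$ iff $\Phi(\boldsymbol e_1)=\min_i\Phi(\boldsymbol e_i)$. Necessity is immediate by taking $\boldsymbol\theta=\boldsymbol e_i$.

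\emph{Part (ii), sufficiency.} If $\rho$ is permutation-invariant, then $\Phi$ is symmetric and concave, hence Schur-concave, so $\boldsymbol\theta\preceq\boldsymbol\eta$ gives $\Phi(\boldsymbol\theta)\ge\Phi(\boldsymbol\eta)$.

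\emph{Part (ii), necessity.} This is the main obstacle. Any permutation $\sigma(\boldsymbol\theta)$ satisfies both $\boldsymbol\theta\preceq\sigma(\boldsymbol\theta)$ and $\sigma(\boldsymbol\theta)\preceq\boldsymbol\theta$. The condition therefore forces $\Phi(\boldsymbol\theta)=\Phi(\sigma(\boldsymbol\theta))$ for all $\boldsymbol\theta\in\Delta_n$ and all permutations $\sigma$. By homogeneity this extends to all $\mathbf w\in\R_+^n$. Define $\rho_\sigma:=\rho\circ T_\sigma^{-1}$ (or the appropriate pushforward). The identity then says $\int\langle\mathbf w,\boldsymbol\xi\rangle^\alpha\rho(\d\boldsymbol\xi)=\int\langle\mathbf w,\boldsymbol\xi\rangle^\alpha\rho_\sigma(\d\boldsymbol\xi)$ for all $\mathbf w\in\R_+^n$, and we must deduce $\rho=\rho_\sigma$.

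To do this, I would pass to Laplace transforms. Compare the $\alpha$-stable random vectors with spectral measures $\rho$ and $\rho_\sigma$ (zero drift), whose Laplace transforms are $\exp(-c_\alpha\Phi(\mathbf w))$ and $\exp(-c_\alpha\Phi_\sigma(\mathbf w))$ for $\mathbf w\in\R_+^n$, where $c_\alpha>0$. Equal Laplace transforms on the orthant give equal distributions for these nonnegative vectors. Equal distributions give equal L\'evy measures, and uniqueness of the polar decomposition in \eqref{stablev} then yields equal spectral measures. Equivalently, one can invoke the uniqueness of the generating triplet together with the fact that the one-sided projections determine the measure on the orthant (Cram\'er--Wold for nonnegative vectors). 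Thus $\rho$ is permutation-invariant.
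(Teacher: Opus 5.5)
Your proposal is correct and follows essentially the same route as the paper. Both arguments reduce to Proposition \ref{th:main2} via the explicit tail $\frac{u^{-\alpha}}{\alpha}\int_{S_n^+}\langle\mathbf w,\boldsymbol\xi\rangle^\alpha\rho(\d\boldsymbol\xi)$, use concavity of $x^\alpha$ for (i), and use symmetric-plus-concave (the paper writes out the Birkhoff decomposition) for sufficiency in (ii). For necessity in (ii), both use two-sided majorization under permutations, homogeneity, Laplace-transform uniqueness for the zero-drift nonnegative stable vectors, uniqueness of the generating triplet, and then the polar decomposition. Your explicit check that $\int\|\mathbf x\|\wedge 1\,\nu(\d\mathbf x)<\infty$ when $\alpha\in(0,1)$ is a detail the paper leaves implicit.
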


Theorem \ref{Thm:4} provides sufficient and necessary conditions on the L\'{e}vy measures of the $\alpha$-stable processes for the validity of \eqref{Supm} and \eqref{Subm}, where $\nu$ is supported on the positive orthant of $\R^n$. It was shown in \cite{IB05} (see also \cite{M25} or \cite{CHSZ26}) that for iid $\alpha$-stable random variables with $\alpha\in (0,1)$, \eqref{Supm} and \eqref{Subm} are only valid if the corresponding L\'{e}vy measure is supported on the positive orthant. A similar support requirement also arises in the general bounded-variation iid setting  in Theorem \ref{th:main1}. Moreover, in many applications in risk management, losses are nonnegative. Hence, it is reasonable to restrict the L\'{e}vy measure to be supported on the positive orthant.  

When  $\nu(\R^n\setminus\R_+^n)>0$, a similarly simple characterization to those of Theorem \ref{Thm:4} is difficult to obtain. In this general case, the reverse diversification order can be characterized by two opposite Schur properties associated with the positive and negative parts of the spectral measure.  The precise characterization is provided in Proposition \ref{Prop:additinal} in Section \ref{Appendix:additional} of the Appendix.

Moreover, the conditions on the L\'{e}vy measures show the complexity of the multivariate $\alpha$-stable distributions as they involve the dependence structure among the components. The conclusion in (ii) of Theorem \ref{Thm:4} shows that the validity of \eqref{Subm} requires the permutation-invariance of $\rho$, which is equivalent to the permutation-invariance of $\nu$ and  the exchangeability of $\mathbf X_t$. Hence, Theorem \ref{Thm:4} means that the multidimensional $\alpha$-stable processes with $\alpha\in (0,1)$ satisfy \eqref{Subm} if and only if the process is exchangeable, reflecting the identical marginals and symmetric dependence structure.  The conclusion in  (i) of Theorem \ref{Thm:4} shows that the validity of \eqref{Supm}  depends only on the relationship among the  $\alpha$-th moments of each component under $\rho$, i.e., the first component has the smallest $\alpha$-th moment.  This is equivalent to  $$\min_{i\in[n]}\int_{\R_+^n\cap\{\|\mathbf x\|>1\}}x_i^\alpha\nu(\d \mathbf x)=\int_{\R_+^n\cap\{\|\mathbf x\|>1\}}x_1^\alpha\nu(\d \mathbf x).$$
This means that if the first component has the smallest $\alpha$-th moment with respect to the L\'{e}vy measure on large positive jumps, then \eqref{Supm} holds.

Next, we see some simple examples to illustrate the dependence structures of the $\alpha$-stable distributions considered in  Theorem~\ref{Thm:4}.

\begin{example} 
\begin{enumerate}[(i)]
\item Suppose that the spectral measure is concentrated on the
coordinate axes:
$\rho_{\mathrm{axis}}=c\sum_{i=1}^n\delta_{\boldsymbol e_i}$ with $c>0$. Then the components of $\boldsymbol X_t$ are
independent and identically distributed positive $\alpha$-stable
random variables.
Clearly, $\rho_{\mathrm{axis}}$ is permutation-invariant. Hence, in light of Theorem \ref{Thm:4}, the stochastic dominance relations in \eqref{Supm} and
\eqref{Subm} hold, recovering the result in \cite{IB05}.
\item Let
$ 
\xi_0=\frac{1}{\sqrt n}(1,\dots,1)
$ 
and suppose that the spectral measure is concentrated on the
diagonal direction:
$ 
\rho_{\mathrm{diag}}=c\delta_{\xi_0}
$ for some $c>0$.
In this case,
$X_{t}^{(1)}=\cdots=X_{t}^{(n)}$ a.s.
Hence, the components are completely dependent. Clearly, $\rho_{\mathrm{diag}}$ is permutation-invariant. Both \eqref{Supm} and \eqref{Subm} hold as equalities.
\item Let $\rho_\lambda=\lambda \rho_{\mathrm{axis}}+(1-\lambda)\rho_{\mathrm{diag}}$ for $\lambda\in (0,1)$.  The components of the corresponding $\alpha$-stable process exhibit dependence intermediate between complete dependence and independence.  Note that $\rho_\lambda$ is permutation-invariant. Hence,  both \eqref{Supm} and \eqref{Subm} hold. 
\end{enumerate}
\end{example}

Next, we explore  the stochastic dominance in \eqref{Supm} and \eqref{Subm} for L\'{e}vy processes with independent and heterogeneous $\alpha$-stable components.
\begin{proposition}\label{prop:heter} Suppose $\{X_t^{(i)}\}_{t\geq 0}, ~i\in [n]$ are independent $\alpha_i$-stable  processes with $\alpha_i\in (0,1)$ and generating triplets $(0,\nu_i,\gamma_0)$ satisfying $\nu_i((-\infty,0))=0,~i\in[n]$ and $\nu_1\neq 0$.
Then the following conclusions hold.
\begin{enumerate}[(i)]
\item  The stochastic dominance in \eqref{Supm}  holds for all $(X_t^{(1)},\dots,X_t^{(n)})$ with $t>0$ and all $\boldsymbol\theta\in\Delta_n$ if and only if $\alpha_1=\dots=\alpha_n$ and $\nu_i((1,\infty))\geq \nu_1((1,\infty)),~i\in [n]$.
\item The stochastic dominance in \eqref{Subm} holds for all $(X_t^{(1)},\dots,X_t^{(n)})$ with $t>0$ and all  $\boldsymbol\theta,\boldsymbol\eta\in \Delta_n$ with $\boldsymbol\theta \preceq\boldsymbol\eta$ if and only if $\alpha_1=\dots=\alpha_n$ and $\nu_i((1,\infty))=\nu_1((1,\infty)),~i\in [n]$.
\end{enumerate}
\end{proposition}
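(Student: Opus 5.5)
The plan is to reduce everything to Proposition \ref{th:main2}, because the process $(X_t^{(1)},\dots,X_t^{(n)})$ is an $n$-dimensional L\'{e}vy process with bounded variation ($\alpha_i<1$) and common drift $\gamma_0$. Since the components are independent, the joint L\'{e}vy measure is concentrated on the positive coordinate axes, $\nu=\sum_i \nu_i\circ\pi_i^{-1}$ with $\pi_i(x)=x\boldsymbol e_i$. Hence for every $u>0$,
$$T_{\boldsymbol\theta}\nu((u,\infty))=\sum_{i=1}^n \nu_i((u/\theta_i,\infty))=\sum_{i=1}^n c_i\,\theta_i^{\alpha_i}u^{-\alpha_i},\qquad c_i:=\nu_i((1,\infty)),$$
with the convention that terms with $\theta_i=0$ vanish. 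We have $c_1>0$ because $\nu_1\ne 0$. All of these measures vanish on $(-\infty,0)$, so the negative-side conditions in Proposition \ref{th:main2} hold trivially. Thus (i) becomes
$$\sum_i c_i\theta_i^{\alpha_i}u^{-\alpha_i}\ge c_1u^{-\alpha_1}\quad\text{for all } u>0,\ \boldsymbol\theta\in\Delta_n,$$
and (ii) becomes
$$\sum_i c_i\theta_i^{\alpha_i}u^{-\alpha_i}\ge \sum_i c_i\eta_i^{\alpha_i}u^{-\alpha_i}\quad\text{for all } u>0 \text{ and } \boldsymbol\theta\preceq\boldsymbol\eta.$$

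For sufficiency in (i), suppose all $\alpha_i=\alpha$ and $c_i\ge c_1$. Then
$$\sum_i c_i\theta_i^\alpha\ge c_1\sum_i\theta_i^\alpha\ge c_1\Big(\sum_i\theta_i\Big)^\alpha=c_1,$$
using subadditivity of $x\mapsto x^\alpha$.

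For sufficiency in (ii), with equal $\alpha$ and equal $c_i$, the map $\boldsymbol\theta\mapsto\sum_i\theta_i^\alpha$ is Schur-concave because it is symmetric and concave. Therefore $\boldsymbol\theta\preceq\boldsymbol\eta$ gives the required inequality.

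For necessity, the main step is forcing the indices $\alpha_i$ to be equal, using the asymptotics in $u$. In (i), I would first take $\boldsymbol\theta=(1-\epsilon)\boldsymbol e_1+\epsilon\boldsymbol e_j$ and let $\epsilon\downarrow 0$. The right side and the $j$-term are unaffected in the limit, so the decisive choice is $\boldsymbol\theta=\boldsymbol e_j$ (or, if $c_j=0$ is allowed, a near-vertex weight). With $\boldsymbol\theta=\boldsymbol e_j$ the condition reads
$$c_ju^{-\alpha_j}\ge c_1u^{-\alpha_1}\quad\text{for all } u>0.$$
Since $c_1>0$, this forces $c_j>0$. Letting $u\to\infty$ gives $\alpha_j\le\alpha_1$, and letting $u\to 0$ gives $\alpha_j\ge\alpha_1$. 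Hence $\alpha_j=\alpha_1$, and then $c_j\ge c_1$ follows directly.

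For (ii), I would first note that $\boldsymbol\theta\preceq\boldsymbol e_j$ for every $\boldsymbol\theta$, since $\boldsymbol e_j$ is maximal in the majorization order. Taking $\boldsymbol\theta=\boldsymbol e_1$ and $\boldsymbol\eta=\boldsymbol e_j$ gives $c_1u^{-\alpha_1}\ge c_ju^{-\alpha_j}$. Because permutations of a vector majorize each other, the reverse inequality also holds. Equality of the two tails for all $u$ then yields $\alpha_j=\alpha_1$ and $c_j=c_1$ (here $c_j>0$ since $c_1>0$).

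The only real care needed is the direction of each limit and confirming that Proposition \ref{th:main2} applies exactly as stated, with its common-drift assumption. I expect no serious obstacle beyond that.
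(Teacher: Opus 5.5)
Your proposal is correct and follows the paper's proof. The paper also reduces both parts via Proposition \ref{th:main2} to the power-law inequalities $\sum_{\theta_i>0} c_i\theta_i^{\alpha_i}u^{-\alpha_i}\ge\cdots$, and uses $\boldsymbol\theta=\boldsymbol e_j$ with $u\to0$ and $u\to\infty$ to force $\alpha_j=\alpha_1$ and $c_j\ge c_1$. The only differences are cosmetic: you prove sufficiency in (ii) directly from the Schur-concavity of $\sum_i\theta_i^\alpha$ where the paper cites Proposition \ref{Prop:2}(ii), and your $\epsilon$-perturbation remark is superfluous because $\boldsymbol e_j\in\Delta_n$ is directly admissible.
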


Proposition \ref{prop:heter} implies that, for a L\'{e}vy process with independent heterogeneous $\alpha$-stable components,  \eqref{Supm} or \eqref{Subm} can hold only if all components have the same stability index. 

\subsection{Multidimensional compound Poisson processes}
In this subsection, we consider the case $\nu(\R^n)<\infty$. An $n$-dimensional compound Poisson process is defined by
\begin{align}\label{CPM}
\mathbf X_t=\sum_{i=1}^{N(t)} \mathbf Y_i,~t\geq 0,
\end{align}
 where $\mathbf Y_i$ are iid copies of $\mathbf Y$ and are independent of the Poisson process $N(t)$ with parameter $\lambda>0$. Here $\mathbf Y=(Y_1,\dots,Y_n)$ is a random vector.  The characteristic function is given by
$$\hat{\mu}_{\mathbf X_t}(\mathbf z)=\exp[t\lambda(\hat{\mu}_{\mathbf Y}(\mathbf z)-1)]=\exp\left[t\lambda\left(\int_{\R^n} (e^{i\mathbf z\mathbf x}-1)\mu_{\mathbf Y}(\d \mathbf x)\right)\right],~\mathbf z\in\R^n,t\geq 0$$
with the L\'{e}vy measure  $\lambda\mu_{\mathbf Y}$.

Applying Proposition \ref{th:main2}, we immediately arrive at the following stochastic dominance results for multivariate compound Poisson distributions.
\begin{proposition}\label{Prop:l}  Let $\{\mathbf X_t\}_{t\geq 0}$ be an $n$-dimensional compound Poisson process defined by \eqref{CPM}. Then  the following conclusions hold.
\begin{enumerate}[(i)]
\item The stochastic dominance in \eqref{Supm} holds for all $\mathbf X_t$ with $t>0$  and all $\boldsymbol\theta\in \Delta_n$  if and only if \eqref{Supm} holds for $\mathbf Y$ with all $\boldsymbol\theta\in \Delta_n$.
\item The stochastic dominance in \eqref{Subm} holds for all $\mathbf X_t$ with $t>0$ and all $\boldsymbol\theta,\boldsymbol\eta\in \Delta_n$ satisfying $\boldsymbol\theta \preceq\boldsymbol\eta$  if and only if \eqref{Subm} holds for $\mathbf Y$ with all $\boldsymbol\theta,\boldsymbol\eta\in \Delta_n$ satisfying $\boldsymbol\theta \preceq\boldsymbol\eta$.
\end{enumerate}
\end{proposition}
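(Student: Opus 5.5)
The plan is to reduce Proposition \ref{Prop:l} directly to Proposition \ref{th:main2}. A compound Poisson process has L\'{e}vy measure $\nu=\lambda\mu_{\mathbf Y}$, which is finite. Hence $\int_{\R^n}\|\mathbf x\|\wedge 1\,\nu(\d\mathbf x)\le\lambda<\infty$, so the bounded-variation hypothesis holds. The representation \eqref{CPM} has no drift: the characteristic function is $\exp[t\lambda\int(e^{i\langle\mathbf z,\mathbf x\rangle}-1)\mu_{\mathbf Y}(\d\mathbf x)]$, so $\boldsymbol\gamma_0=\mathbf 0=(0,\dots,0)$, which has the required common-drift form. Proposition \ref{th:main2} therefore applies.

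\textbf{Reformulating the conditions.} For each fixed $\boldsymbol\theta$, Proposition \ref{th:main2}(i) says that \eqref{Supm} holds for all $t>0$ if and only if two inequalities hold. The first is
\[
\nu\Bigl(\Bigl\{\sum_i\theta_ix_i>u\Bigr\}\Bigr)\ge\nu(\{x_1>u\}) \quad \text{for } u>0 .
\]
Dividing by $\lambda$, this reads $\p(\sum_i\theta_iY_i>u)\ge\p(Y_1>u)$ for $u>0$. The second is
\[
\nu\Bigl(\Bigl\{\sum_i\theta_ix_i<u\Bigr\}\Bigr)\le\nu(\{x_1<u\}) \quad \text{for } u<0 ,
\]
which reads $\p(\sum_i\theta_iY_i<u)\le\p(Y_1<u)$ for $u<0$.

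Here there is a subtlety. The L\'{e}vy measure satisfies $\nu(\{\mathbf 0\})=0$, while $\mu_{\mathbf Y}$ may charge $\mathbf 0$. So I will interpret $\mu_{\mathbf Y}$ restricted to $\R^n\setminus\{\mathbf 0\}$ as the L\'{e}vy measure. This is harmless, because an atom at the origin contributes nothing to $\{\sum\theta_ix_i>u\}$ for $u>0$ or to $\{\sum\theta_ix_i<u\}$ for $u<0$.

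\textbf{Main step: identifying these with \eqref{Supm} for $\mathbf Y$.} I must show that the two one-sided tail inequalities (for $u>0$ and for $u<0$) are jointly equivalent to $Y_1\le_{\mathrm{st}}\sum_i\theta_iY_i$.

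\emph{Forward direction.} If $Y_1\le_{\mathrm{st}}\sum\theta_iY_i$, then $\p(\sum\theta_iY_i>u)\ge\p(Y_1>u)$ for all $u$. For the lower tails, $\p(\sum\theta_iY_i<u)=\lim_{v\uparrow u}F(v)\le \lim_{v\uparrow u}F_{Y_1}(v)=\p(Y_1<u)$. So both conditions hold.

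\emph{Reverse direction.} Suppose the two conditions hold. For $u>0$, the first condition gives $F_{\sum\theta_iY_i}(u)\le F_{Y_1}(u)$ directly. For $u<0$, write $F(u)=\lim_{v\downarrow u}\p(\cdot<v)$, taking $v\downarrow u$ through values $v<0$. The strict-inequality comparison then passes to the limit and gives $F_{\sum\theta_iY_i}(u)\le F_{Y_1}(u)$. For $u=0$, take $u\downarrow0$ along positive values and use right-continuity of the distribution functions. Hence $F_{\sum\theta_iY_i}\le F_{Y_1}$ everywhere, which is \eqref{Supm} for $\mathbf Y$.

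Since Proposition \ref{th:main2} is stated for each fixed $\boldsymbol\theta$, quantifying over all $\boldsymbol\theta\in\Delta_n$ gives (i).

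\textbf{Part (ii).} This is identical, with Proposition \ref{th:main2}(ii) in place of (i). Fix $\boldsymbol\theta\preceq\boldsymbol\eta$ and replace $x_1$ by $\sum\eta_ix_i$. The same limiting argument turns the one-sided L\'{e}vy-measure inequalities into $\sum\eta_iY_i\le_{\mathrm{st}}\sum\theta_iY_i$, and back.

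The only genuine obstacle is the bookkeeping just described. One part is passing between strict and non-strict tail inequalities on each half-line, including the point $u=0$. The other is the removal of the atom of $\mu_{\mathbf Y}$ at the origin. Neither step requires anything beyond monotone limits of distribution functions.
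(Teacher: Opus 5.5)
Your proposal is correct and takes the same route as the paper, which states that Proposition~\ref{Prop:l} follows immediately from Proposition~\ref{th:main2} applied to the finite L\'{e}vy measure $\lambda\mu_{\mathbf Y}$ with zero drift, and gives no further detail. Your treatment of the atom of $\mu_{\mathbf Y}$ at the origin, and your conversion of the one-sided tail inequalities on $(0,\infty)$ and $(-\infty,0)$ into $F_{\sum_i\theta_iY_i}\le F_{Y_1}$ (with $u=0$ handled by right-continuity), correctly fill in what the paper leaves implicit.
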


Note that the components of a multidimensional compound Poisson process share the same counting process for each component. Therefore, Proposition \ref{Prop:l} is fundamentally different from Proposition \ref{Prop:3}, even when the components of $\mathbf Y$ are assumed to be iid.   Proposition \ref{Prop:l} reveals a fundamental correspondence between the multidimensional compound Poisson process and its loss random vector regarding the validity of \eqref{Supm} and \eqref{Subm}.  Specifically, the random loss vector satisfies \eqref{Supm} or \eqref{Subm} if and only if the corresponding compound Poisson process satisfies the respective stochastic order for all $t>0$. This correspondence may also provide a useful way to construct further examples satisfying \eqref{Supm} and \eqref{Subm}. 

Next, we consider $\mathbf X_t=(X_t^{(1)},\dots, X_t^{(n)})$, where $\{X_t^{(i)}\}_{t\geq 0}$ are independent compound Poisson processes with heterogeneous distributions given by
\begin{align}\label{ICP}
    X_t^{(i)}=\sum_{j=1}^{N_i(t)} Y_{ij},~i\in [n],
\end{align}
where $N_i(t)$ are the Poisson processes with parameters $\lambda_i>0$, $\{Y_{ij}\}_{j=1}^\infty$ representing  the claims of the $i$-th line of business are iid copies of $Y_i$, and all of these processes and random variables  are independent. 

\begin{proposition}\label{Prop:ICP}  Let $\{\mathbf X_t\}_{t\geq 0}$ be a L\'{e}vy process with independent compound Poisson components defined by \eqref{ICP}. Then the following conclusions hold.
\begin{enumerate}[(i)]
\item  The stochastic dominance in \eqref{Supm} holds for all $\mathbf X_t$ with $t>0$ and all $\boldsymbol\theta\in \Delta_n$   if and only if  $F_{Y_2}(0-)=\cdots=F_{Y_n}(0-)=0$ and $\lambda_1(1-F_{Y_1}((x_1+\dots+x_n)^{-1}))\leq \sum_{i=1}^n\lambda_i(1-F_{Y_i}(x_i^{-1}))$ for all $x_i\in (0,\infty)$.
\item The stochastic dominance in \eqref{Subm} holds for all $\mathbf X_t$ with $t>0$ and all $\boldsymbol\theta,\boldsymbol\eta\in \Delta_n$ satisfying $\boldsymbol\theta \preceq\boldsymbol\eta$  if and only if $\lambda_1=\dots=\lambda_n$, $F_{Y_1}=\dots=F_{Y_n}$, $F_{Y_1}(0-)=0$  and $1-F_{Y_1}(x^{-1})$ is concave on  $(0,\infty)$.
\end{enumerate}
\end{proposition}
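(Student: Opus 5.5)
The plan is to derive everything from Proposition \ref{th:main2}, after computing the projected L\'{e}vy measures explicitly. The compound Poisson process with independent components \eqref{ICP} has drift $\boldsymbol\gamma_0=\mathbf 0$. Its L\'{e}vy measure is concentrated on the coordinate axes: $\nu=\sum_{i=1}^n\lambda_i\,\mu_{Y_i}\circ\pi_i^{-1}$, where $\pi_i(y)=y\boldsymbol e_i$. Hence, for $B\in\mathcal B(\R\setminus\{0\})$,
\[
T_{\boldsymbol\theta}\nu(B)=\sum_{i:\theta_i>0}\lambda_i\,\p(\theta_iY_i\in B).
\]
Since $\nu(\R^n)<\infty$, Proposition \ref{th:main2} applies. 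It reduces \eqref{Supm} and \eqref{Subm}, uniformly in $t$, to tail inequalities between these one-dimensional measures.

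\textbf{Part (i).} By Proposition \ref{th:main2}(i), \eqref{Supm} for all $t>0$ and all $\boldsymbol\theta$ is equivalent to two families of inequalities.
\begin{itemize}
\item[(a)] $\sum_{i:\theta_i>0}\lambda_i\p(\theta_iY_i>u)\ge\lambda_1\p(Y_1>u)$ for $u>0$.
\item[(b)] $\sum_{i:\theta_i>0}\lambda_i\p(\theta_iY_i<u)\le\lambda_1\p(Y_1<u)$ for $u<0$.
\end{itemize}
For the negative side, I first show (b) forces $F_{Y_i}(0-)=0$ for $i\ge2$. Take $\boldsymbol\theta=(1-\epsilon)\boldsymbol e_1+\epsilon\boldsymbol e_i$ and $u=\epsilon v$ with $v<0$ fixed. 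Dropping the nonnegative terms and using $\{(1-\epsilon)Y_1<\epsilon v\}\supseteq\ldots$, (b) gives
\[
\lambda_i\p(Y_i<v)\le\lambda_1\p\bigl(\epsilon v/(1-\epsilon)\le Y_1<\epsilon v\bigr).
\]
The right-hand side tends to $0$ as $\epsilon\downarrow0$, so $\p(Y_i<v)=0$ for every $v<0$. Conversely, if $F_{Y_i}(0-)=0$ for $i\ge2$, then (b) reduces to $\lambda_1\p(\theta_1Y_1<u)\le\lambda_1\p(Y_1<u)$. This holds automatically, because $\theta_1\le1$ and $u<0$. So $Y_1$ itself may take negative values, which explains why only $Y_2,\dots,Y_n$ appear in the statement.

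For the positive side, I substitute $x_i=\theta_i/u$, so that $\sum_ix_i=1/u$ and $\p(\theta_iY_i>u)=1-F_{Y_i}(x_i^{-1})$. Then (a) over all $\boldsymbol\theta$ with full support and all $u>0$ is exactly the displayed inequality for all $x_i\in(0,\infty)$. Boundary $\boldsymbol\theta$, with some $\theta_i=0$, are covered by letting $x_i\downarrow0$, since $1-F_{Y_i}(x_i^{-1})\to0$ and the left side moves monotonically in the right direction.

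\textbf{Part (ii).} For necessity, I note that $\boldsymbol e_j$ majorizes every $\boldsymbol\theta\in\Delta_n$. Therefore \eqref{Subm} with $\boldsymbol\eta=\boldsymbol e_j$ is the analogue of (i) with component $j$ in the role of component $1$. Applying the argument of part (i) to each $j$ gives $F_{Y_i}(0-)=0$ for all $i$. Moreover, $\boldsymbol e_i\preceq\boldsymbol e_j$ and $\boldsymbol e_j\preceq\boldsymbol e_i$. So Proposition \ref{th:main2}(ii) applied in both directions yields $T_{\boldsymbol e_i}\nu=T_{\boldsymbol e_j}\nu$ on $(0,\infty)$, that is, $\lambda_i(1-F_{Y_i}(x))=\lambda_j(1-F_{Y_j}(x))$ for $x>0$.

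This is the delicate step. The L\'{e}vy measure does not see jumps of size zero, so literally one only obtains equality of $\lambda_i\mu_{Y_i}$ on $\R\setminus\{0\}$. I would handle this by the standard normalization $\p(Y_i=0)=0$, which involves no loss of generality: thinning out zero claims changes neither the process nor the L\'{e}vy measure. Under this normalization, letting $x\downarrow0$ gives $\lambda_i=\lambda_j$ and then $F_{Y_i}=F_{Y_j}$. The components are then iid compound Poisson processes, and Proposition \ref{Prop:3}(ii) forces $1-F_{Y_1}(x^{-1})$ to be concave on $(0,\infty)$.

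For sufficiency, under equal $\lambda_i$ and equal $F_{Y_i}$ the components are iid copies of a compound Poisson process. Proposition \ref{Prop:3}(ii) then gives \eqref{Subm} directly.
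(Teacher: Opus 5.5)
Your proposal is correct and follows essentially the same route as the paper. Both arguments apply Proposition \ref{th:main2} to $T_{\boldsymbol\theta}\nu=\sum_{\theta_i>0}\lambda_i\,\p(\theta_iY_i\in\cdot)$ and remove negative claims of $Y_2,\dots,Y_n$ by pushing the weight on the first component to one; the paper's $x_1\to-\infty$ is the same limit as your $\epsilon\downarrow0$. Both use the substitution $x_i=\theta_i/u$ on the positive side, and in (ii) both use $\boldsymbol e_i\preceq\boldsymbol e_j$ in both directions to reduce to the iid case and Proposition \ref{Prop:3}(ii).

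Your extra care at two points is substantive, not cosmetic. First, the normalization $\p(Y_i=0)=0$ is genuinely needed for the literal conclusion $\lambda_1=\dots=\lambda_n$, $F_{Y_1}=\dots=F_{Y_n}$. For example, take $\lambda_2=2\lambda_1$ and let $Y_2$ equal $0$ or a copy of $Y_1$ with probability $1/2$ each; this produces the same process, so the order can hold while $\lambda_1\neq\lambda_2$. The paper's ``boils down to the iid case'' passes over this. Second, your treatment of boundary $\boldsymbol\theta$ in (i) fills a step the paper leaves implicit.
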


Note that Proposition \ref{Prop:ICP} is an extension of the results in Proposition \ref{Prop:3}. For heterogeneous compound Poisson processes, the validity of \eqref{Supm} does not require $F_{Y_1}(0-)=0$, thereby allowing negative values for $Y_1$, in contrast to Proposition \ref{Prop:3}.  Moreover, a more general form of subadditivity is required. In contrast, the validity of \eqref{Subm} forces the heterogeneous compound Poisson processes to reduce to the homogeneous case.

\section{Extensions and applications}\label{Sec:extension}
The purpose of this section is not merely to construct further examples satisfying the stochastic dominance relations, but to show that uniform-in-time L\'{e}vy-process results extend to sample-path-dependent functionals relevant to actuarial science, finance and operations research.

For  a L\'{e}vy process $\{X_t\}_{t\geq 0}$, we consider some stochastic processes dependent on the  sample paths of the  L\'{e}vy process.
For $t\geq 0$, define the following three new stochastic processes $$Y_t=\sup_{0\leq s\leq t}X_s, \quad \quad Z_t=\int_0^t f(s,t)\d X_s\quad \text{and}\quad  W_t=\int_0^t g(X_s)\d s ,$$
where $f$ and $g$ are functions satisfying the conditions specified below. 
Next, we investigate whether the above  stochastic processes inherit the stochastic dominance in \eqref{Sup} or \eqref{Sub} from the original L\'{e}vy process, and discuss their applications and implications.
\subsection{Running maxima}
In this subsection, we consider the running maximum of a L\'{e}vy process defined by $$Y_t=\sup_{0\leq s\leq t}X_s,~t\geq 0.$$  The running maxima of L\'{e}vy processes  are fundamental quantities in fluctuation theory and arise naturally in insurance risk, storage, and first-passage problems; see e.g.,  Chapter XI of \cite{AA10} for applications to insurance risk, \cite{DM15} for L\'{e}vy-driven storage models, and Chapter 7 of \cite{K14} for  first-passage problems. We investigate the diversification properties of the running maximum and discuss their implications. We first establish the following result.

\begin{proposition}\label{Prop:ruin}
    Let  $\{X_t\}_{t\geq 0}$ be a L\'{e}vy process with generating triplet $(0, \nu, \gamma_0)$ satisfying $\int_D|x|\nu(\d x)<\infty$ and $\nu((-\infty,0))=0$, and  $\{X_t^{(i)}\}_{t\geq 0},~i\in [n]$ be iid copies of $\{X_t\}_{t\geq 0}$. If  $\nu((x^{-1},\infty))$ is subadditive on $(0,\infty)$, then 
    \begin{align*}
Y_t\leq_{\mathrm{st}} \sup_{0\leq s\leq t}\sum_{i=1}^n\theta_iX_s^{(i)}\leq_{\mathrm{a.s.}} \sum_{i=1}^n\theta_i Y_t^{(i)}
    \end{align*}
    for all $t>0$ and all $\boldsymbol\theta\in \Delta_n$, where $Y_t^{(i)}$ represents the running maximum of $\{X_t^{(i)}\}_{t\geq 0}$.
\end{proposition}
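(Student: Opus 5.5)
The second relation is pathwise and needs no distributional input. For every $s\in[0,t]$ and every $i$ we have $X_s^{(i)}\le Y_t^{(i)}$. Since the $\theta_i$ are nonnegative, this gives $\sum_{i=1}^n\theta_iX_s^{(i)}\le\sum_{i=1}^n\theta_iY_t^{(i)}$ for all $s\in[0,t]$, and taking the supremum over $s$ yields the $\leq_{\mathrm{a.s.}}$ relation.

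For the first relation, I will compare two processes. The first is the L\'evy process $\{X_s\}$. The second is $\{S_s\}_{s\ge0}$ with $S_s:=\sum_{i=1}^n\theta_iX_s^{(i)}$, which is again a L\'evy process because the summands are independent L\'evy processes. The plan is to upgrade the one-dimensional dominance $X_s\le_{\mathrm{st}}S_s$ for every $s$, supplied by Theorem \ref{th:main1}(i), to dominance of running suprema.

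The key observation is that $S$ has the same drift $\gamma_0$ as $X$. Its L\'evy measure is $\nu_S=\sum_i\nu\circ(\theta_i^{-1}\cdot)$, taken over those $i$ with $\theta_i>0$. This measure is concentrated on $(0,\infty)$ and satisfies $\nu_S((u,\infty))=\sum_i\nu((u/\theta_i,\infty))$. Setting $h(x)=\nu((x^{-1},\infty))$, subadditivity of $h$ gives
\[
\nu_S((u,\infty))=\sum_i h(\theta_i/u)\ge h(1/u)=\nu((u,\infty)),\qquad u>0 .
\]
So both processes are of the form drift $\gamma_0$ plus a pure-jump subordinator, with tails $\bar\nu_S\ge\bar\nu$.

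From here I will build a coupling that makes $X_s\le S_s$ hold pathwise for all $s$. I represent each jump part by a Poisson random measure on $(0,\infty)\times(0,\infty)$ with Lebesgue intensity $\d s\,\d v$, and map $v$ to a jump size through the right-continuous inverses $\bar\nu^{-1}(v)\le\bar\nu_S^{-1}(v)$. Using the same Poisson random measure for both processes, every jump of $X$ is no larger than the corresponding jump of $\widetilde S$, where $\widetilde S$ has the law of $S$. Since the drifts are equal, $X_s\le\widetilde S_s$ for all $s$ almost surely. Hence $\sup_{s\le t}X_s\le\sup_{s\le t}\widetilde S_s$ almost surely, and therefore $Y_t\le_{\mathrm{st}}\sup_{s\le t}S_s$.

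The main obstacle is making this quantile coupling rigorous for infinite L\'evy measures. I need the finite-variation condition $\int_D|x|\nu(\d x)<\infty$, together with the analogous condition for $\nu_S$ (which holds since $\nu_S$ is a finite sum of scalings of $\nu$), to ensure that the series of jumps converges absolutely for both processes. Then the monotone comparison survives the limit. A cleaner alternative would be to define $X$ and $\widetilde S$ as images of a single subordinator and check that they have the correct laws via the characteristic function in the $(0,\nu,\gamma_0)$ representation.
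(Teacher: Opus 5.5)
Your argument is correct. For the first inequality it takes a genuinely different route from the paper; the second, pathwise inequality is handled identically.

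The paper works only with marginal laws. It takes the comparison $X_s\le_{\mathrm{st}}\sum_{i=1}^n\theta_iX_s^{(i)}$ from Theorem \ref{th:main1}(i), which rests on Lemma \ref{Prop:comparison}. It applies this to the independent stationary increments over the grid $t_k^{(m)}=kt/m$, transfers it to $\max_k(x_1+\dots+x_k)$ via Lemma \ref{le:extended}(ii), and lets $m\to\infty$ using right-continuity.

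You instead realise both processes from one Poisson random measure through the inverse tails $\bar\nu^{-1}\le\bar\nu_S^{-1}$. This needs only the tail inequality $\bar\nu\le\bar\nu_S$, which is the same inequality as \eqref{Eq:h_1}, and it gives an almost sure ordering of entire paths. In fact $\widetilde S-\widetilde X=\int_{(0,\cdot]\times(0,\infty)}(\bar\nu_S^{-1}(v)-\bar\nu^{-1}(v))\,N(\d r,\d v)$ is nondecreasing, so all increments are ordered as well.

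Your route avoids both the external comparison theorem and the discretisation-and-limit step, and it buys more. Every nondecreasing path functional is ordered at once, so Corollaries \ref{Cor:ruin} and \ref{Cor:queue} follow immediately. So does the first comparison underlying Proposition \ref{Prop:integrated}, and there for any nondecreasing $g$. Convexity is needed only to pass from $\int_0^tg(\sum_i\theta_iX_s^{(i)})\,\d s$ to $\sum_i\theta_i\int_0^tg(X_s^{(i)})\,\d s$.

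The paper's route is more modular. It uses nothing beyond one-dimensional dominance at every time plus independent stationary increments. It therefore applies to any pair of L\'evy processes with ordered marginals, without exploiting the common-drift-plus-subordinator structure your coupling depends on.

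When you write it up, fix three details:
\begin{enumerate}
\item Take $\bar\nu^{-1}(v)=\inf\{x>0:\bar\nu(x)\le v\}$, so that the Lebesgue measure of $\{v:\bar\nu^{-1}(v)>x\}$ equals $\bar\nu(x)$.
\item Absolute convergence of the jump series then follows from $\int_0^\infty(\bar\nu^{-1}(v)\wedge1)\,\d v=\int(x\wedge1)\,\nu(\d x)<\infty$.
\item The coupled processes $\widetilde X,\widetilde S$ are L\'evy processes with the same triplets as $X$ and $\sum_i\theta_iX^{(i)}$. Since they have c\`adl\`ag paths, their suprema over $[0,t]$ have the same laws as those of the original processes.
\end{enumerate}
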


Proposition \ref{Prop:ruin}  shows that the non-diversification phenomenon extends to running maxima. More precisely, the running maximum of the original process is stochastically smaller than the running maximum of the diversified process, which is further bounded above almost surely by the corresponding convex combination of the individual running maxima. In risk theory, $Y_t$ represents the largest cumulative loss over the time period $[0,t]$. Thus, the result implies that allocating a fixed exposure across independent copies of the  underlying L\'{e}vy loss process does not reduce  the resulting maximum-loss risk in the sense of first-order stochastic dominance whenever the underlying L\'{e}vy process exhibits a non-diversification phenomenon. 

Note that if $\gamma_0\geq 0$, then $Y_t=X_t$ a.s. The more interesting case is $\gamma_0<0$, in which the running maximum process $\{Y_t\}_{t\geq 0}$ is no longer a L\'{e}vy process in general since it is a sample-path-dependent transformation of $\{X_t\}_{t\geq 0}$. Therefore, Proposition \ref{Prop:ruin} extends the non-diversification phenomenon beyond the class of L\'{e}vy processes and, in particular,  provides new distributions satisfying \eqref{Sup}. 

The first inequality in Proposition \ref{Prop:ruin} shows that the running maximum of a diversified portfolio of L\'{e}vy risk processes stochastically dominates the running maximum of the underlying L\'{e}vy  process whenever the latter exhibits a non-diversification phenomenon. This has an important implication in risk theory. The surplus process of an insurer is commonly defined as 
$$u+ct-X_t,~t>0,$$
where $u>0$ is the initial capital, $c>0$ is the premium rate, and $X_t$ represents the cumulative loss up to time $t$. The cumulative loss process $X_t$ is often modeled by  a compound Poisson process or, more generally, a L\'{e}vy process. The  ruin probability over a finite time horizon  $[0,T]$ with $T>0$ is defined as 
$$\mathbb P\left(\inf_{0\leq t\leq T}(u+ct-X_t)<0\right)=\mathbb P\left(\sup_{0\leq t\leq T}(X_t-ct)>u\right).$$
Since $\{X_t-ct\}_{t\geq 0}$ is again a L\'{e}vy process with the same L\'{e}vy measure,
 Proposition \ref{Prop:ruin} yields the following result.
\begin{corollary}\label{Cor:ruin} Under the assumption of Proposition \ref{Prop:ruin} for the L\'{e}vy process $\{X_t\}_{t\geq 0}$ and its iid copies $\{X_t^{(i)}\}_{t\geq 0},~i\in [n]$, we have  
    $$\mathbb P\left(\inf_{0\leq t\leq T}(u+ct-X_t)<0\right)\leq \mathbb P\left(\inf_{0\leq t\leq T}\left(u+ct-\sum_{i=1}^n\theta_iX_t^{(i)}\right)<0\right)$$
    holds for all $\boldsymbol\theta\in\Delta_n$ and all $T>0$, where $u>0$ and $c>0$.
\end{corollary}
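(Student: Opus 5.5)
The corollary is a direct consequence of Proposition \ref{Prop:ruin}, applied to a drift-shifted process. Fix $c>0$ and set $\tilde X_t:=X_t-ct$. This is again a L\'{e}vy process. Its generating triplet is $(0,\nu,\gamma_0-c)$, so it has the same L\'{e}vy measure $\nu$. Hence it still satisfies $\int_D|x|\nu(\d x)<\infty$ and $\nu((-\infty,0))=0$, and $\nu((x^{-1},\infty))$ is still subadditive on $(0,\infty)$. The processes $\tilde X^{(i)}_t:=X^{(i)}_t-ct$, $i\in[n]$, are iid copies of $\{\tilde X_t\}_{t\ge 0}$.

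The key identity uses $\sum_{i=1}^n\theta_i=1$:
\[
\sum_{i=1}^n\theta_i\tilde X^{(i)}_s=\sum_{i=1}^n\theta_iX^{(i)}_s-cs .
\]
Applying the first inequality of Proposition \ref{Prop:ruin} to $\tilde X$ with horizon $T$ gives
\[
\sup_{0\le s\le T}(X_s-cs)\le_{\mathrm{st}}\sup_{0\le s\le T}\Big(\sum_{i=1}^n\theta_iX^{(i)}_s-cs\Big).
\]

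By the definition of $\le_{\mathrm{st}}$, the survival functions are ordered: $\p(\cdot>u)$ on the left is at most $\p(\cdot>u)$ on the right for every $u$. Finally, rewrite each event using
\[
\Big\{\inf_{0\le t\le T}(u+ct-Z_t)<0\Big\}=\Big\{\sup_{0\le t\le T}(Z_t-ct)>u\Big\},
\]
taking $Z=X$ and $Z=\sum_i\theta_iX^{(i)}$ respectively. This yields the stated inequality for all $\boldsymbol\theta\in\Delta_n$, $T>0$, $u>0$ and $c>0$.

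No step is a real obstacle. The only points to check are that a deterministic linear drift changes only $\gamma_0$ and not $\nu$, and that the weights summing to one make the drift of the combination equal to $-cs$.
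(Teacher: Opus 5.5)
Your proposal is correct and follows the same route as the paper. The paper also notes that $\{X_t-ct\}_{t\ge 0}$ is a L\'{e}vy process with the same L\'{e}vy measure and applies the first inequality of Proposition \ref{Prop:ruin} with horizon $T$. It then reads off the ruin-probability comparison from the survival functions at level $u$. Your explicit observation that $\sum_{i=1}^n\theta_i=1$ makes the drift of the combination equal to $-cs$ is the step the paper leaves implicit.
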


Corollary \ref{Cor:ruin} shows that diversifying a fixed exposure  across independent and identically distributed insurance risks may increase the  finite-horizon  ruin probability if the underlying L\'{e}vy loss process exhibits a non-diversification phenomenon.

For a L\'{e}vy process $\{X_t\}_{t\geq 0}$, the initially empty storage process  is given by 
$$Q_t=\sup_{0\leq s\leq t} \{X_t-X_s-c(t-s)\},$$
where $X_t$ represents the cumulative input and $c>0$ is the service rate. We refer to \cite{DM15}  for further details on the definition, formulation, and interpretation of storage processes.   By the stationary and independent increments of a L\'{e}vy process,  $$\{X_t-X_{t-s}-cs\}_{0\leq s\leq t}\overset{d}{=}\{X_s-cs\}_{0\leq s\leq t},~t\geq 0.$$ Hence, Proposition \ref{Prop:ruin} can be applied to $Q_t$, yielding the following result.
\begin{corollary}\label{Cor:queue}
  Let  $\{X_t\}_{t\geq 0}$ be a L\'{e}vy process with generating triplet $(0, \nu, \gamma_0)$ satisfying the assumptions of Proposition \ref{Prop:ruin}, and let  $\{X_t^{(i)}\}_{t\geq 0},~i\in [n]$ be iid copies of $\{X_t\}_{t\geq 0}$. If  $\nu((x^{-1},\infty))$ is subadditive on $(0,\infty)$, then, 
  $$Q_t\leq_{\mathrm{st}} \sup_{0\leq s\leq t} \left\{\sum_{i=1}^n\theta_i(X^{(i)}_t-X^{(i)}_s)-c(t-s)\right\}\leq_{\mathrm{a.s.}} \sum_{i=1}^n\theta_i Q_t^{(i)}$$
  for all $t>0$ and all $\boldsymbol\theta\in \Delta_n$, where $Q_t^{(i)}$ denotes the storage content associated with $\{X_t^{(i)}\}_{t\geq 0}$ and service rate $c$. 
\end{corollary}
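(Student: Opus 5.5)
The plan is to reduce Corollary \ref{Cor:queue} to Proposition \ref{Prop:ruin}, applied to the drifted processes. Fix $t>0$ and $\boldsymbol\theta\in\Delta_n$. Put $\tilde X_s=X_s-cs$ and $\tilde X^{(i)}_s=X^{(i)}_s-cs$. These are iid L\'{e}vy processes with generating triplet $(0,\nu,\gamma_0-c)$. The L\'{e}vy measure is unchanged, so the hypotheses of Proposition \ref{Prop:ruin} still hold. Since $\sum_i\theta_i=1$, we have $\sum_i\theta_i\tilde X^{(i)}_s=\sum_i\theta_iX^{(i)}_s-cs$.

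\textbf{Time reversal, used for each side separately.} First, stationary and independent increments give
$$\{X_t-X_{t-s}-cs\}_{0\le s\le t}\overset{d}{=}\{\tilde X_s\}_{0\le s\le t}$$
as processes, hence
$$Q_t=\sup_{0\le s\le t}\{X_t-X_{t-s}-cs\}\overset{d}{=}\sup_{0\le s\le t}\tilde X_s.$$
Second, the vector process $(X^{(1)},\dots,X^{(n)})$ is itself an $n$-dimensional L\'{e}vy process, since its components are independent L\'{e}vy processes. So the same reversal applied jointly gives
$$\Bigl\{\sum_i\theta_i(X^{(i)}_t-X^{(i)}_{t-s})-cs\Bigr\}_{0\le s\le t}\overset{d}{=}\Bigl\{\sum_i\theta_i\tilde X^{(i)}_s\Bigr\}_{0\le s\le t}.$$
Hence the middle quantity $\sup_{0\le s\le t}\{\sum_i\theta_i(X^{(i)}_t-X^{(i)}_s)-c(t-s)\}$ (after the substitution $s\mapsto t-s$) has the same law as $\sup_{0\le s\le t}\sum_i\theta_i\tilde X^{(i)}_s$. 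The first inequality of Proposition \ref{Prop:ruin}, applied to $\tilde X$, then yields the first $\le_{\mathrm{st}}$. This step is fine because $\le_{\mathrm{st}}$ depends only on marginal laws.

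\textbf{Technical point on path regularity.} This is the step needing the most care. Equality in law of the suprema requires that the supremum be a measurable functional determined by finite-dimensional distributions. Since $s\mapsto X_t-X_{(t-s)-}$ is c\`{a}dl\`{a}g, we can restrict to rational $s$ plus the endpoint $s=t$. Using the reversed process $X_t-X_{(t-s)-}$ rather than $X_t-X_{t-s}$ does not change the supremum over $[0,t]$, because left limits are approximated by nearby values. I would state this briefly, citing the standard duality lemma (e.g. Lemma 3.4 of \cite{K14}).

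\textbf{The almost-sure inequality.} The second inequality is pathwise and needs no distributional argument. Since $c(t-s)=\sum_i\theta_ic(t-s)$,
$$\sum_i\theta_i(X^{(i)}_t-X^{(i)}_s)-c(t-s)=\sum_i\theta_i\bigl(X^{(i)}_t-X^{(i)}_s-c(t-s)\bigr).$$
The supremum of a sum is at most the sum of the suprema, and $\theta_i\ge 0$, so the left side's supremum over $s$ is at most $\sum_i\theta_iQ^{(i)}_t$ for every $\omega$.
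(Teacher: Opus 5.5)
Your proposal is correct and follows essentially the same route as the paper: the time-reversal identity $\{X_t-X_{t-s}-cs\}_{0\le s\le t}\overset{d}{=}\{X_s-cs\}_{0\le s\le t}$ from stationary independent increments is applied to both $X$ and the diversified process, Proposition \ref{Prop:ruin} is then applied to the drifted L\'{e}vy process (same L\'{e}vy measure, drift $\gamma_0-c$), and the second inequality holds pathwise. Your remarks on path regularity (equality of the suprema) and on why the drifted process still satisfies the hypotheses make explicit details that the paper leaves implicit.
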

 Corollary \ref{Cor:queue} shows that diversifying a fixed total input exposure across independent and identically distributed L\'{e}vy input streams may increase the storage content when the underlying L\'{e}vy input process exhibits a non-diversification phenomenon. Corollary \ref{Cor:queue} also  implies that such diversification may  increase the overload probability.

\subsection{L\'{e}vy-driven stochastic integral}
In this subsection, we consider the L\'{e}vy-driven stochastic integral defined by $$Z_t=\int_0^t f(s,t)\d X_s,~t\geq 0,$$ where $f(\cdot, t)$ is a continuous function and $\int_D|x|\nu(\d x)<\infty$. Under the assumption of $\int_D|x|\nu(\d x)<\infty$, the sample paths of a L\'{e}vy process have  bounded variation on every compact time interval almost surely. Hence, $Z_t$ is well-defined and finite almost surely for every $t\geq 0$. 

We investigate whether the non-diversification phenomenon and the reverse diversification order of the underlying L\'{e}vy process can be preserved under such stochastic integrals and discuss the corresponding applications. We first establish the following result.
\begin{proposition}\label{Prop:SI} Let  $\{X_t\}_{t\geq 0}$ be a L\'{e}vy process with generating triplet $(0, \nu, \gamma_0)$ satisfying $\int_D|x|\nu(\d x)<\infty$ and $\nu((-\infty,0))=0$.  
 Suppose $f(\cdot,t)$ is nonnegative and continuous on $[0,t]$ for every $t>0$. 
 \begin{enumerate}[(i)]
 \item If $\nu((x^{-1},\infty))$ is subadditive on $(0,\infty)$, then \eqref{Sup} holds for all $Z_t$ with $t>0$ and all $\boldsymbol \theta\in\Delta_n$;
 \item If $\nu((x^{-1},\infty))$ is concave on $(0,\infty)$, then  \eqref{Sub} holds for all $Z_t$ with $t>0$ and all $\boldsymbol \theta,\boldsymbol\eta\in\Delta_n$ satisfying $\boldsymbol\theta \preceq\boldsymbol\eta$.
 \end{enumerate}
\end{proposition}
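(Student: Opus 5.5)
The plan is to show that $Z_t$ is itself infinitely divisible with bounded-variation triplet, compute its L\'evy measure, and then reduce to Theorem \ref{th:main1} applied at time $1$. Because $\int_D|x|\nu(\d x)<\infty$ and $f(\cdot,t)$ is deterministic, bounded and continuous on $[0,t]$, the integral $Z_t=\int_0^t f(s,t)\,\d X_s$ is a limit of Riemann--Stieltjes sums $\sum_k f(s_k,t)(X_{s_{k+1}}-X_{s_k})$. These sums have independent infinitely divisible summands. Passing to the limit in characteristic functions (dominated convergence, using $|e^{iu}-1|\le |u|\wedge 2$ and boundedness of $f$) gives
$$\E e^{izZ_t}=\exp\Big[iz\gamma_0\int_0^t f(s,t)\d s+\int_0^t\int_{(0,\infty)}\big(e^{izf(s,t)x}-1\big)\nu(\d x)\d s\Big].$$
Hence $Z_t$ is infinitely divisible with triplet $(0,\nu_t,\gamma_t)$. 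Here $\gamma_t=\gamma_0\int_0^tf(s,t)\d s$, and $\nu_t$ is the image of $\nu\otimes \mathrm{Leb}|_{[0,t]}$ under $(x,s)\mapsto f(s,t)x$, restricted to $\R\setminus\{0\}$ (points with $f(s,t)=0$ are discarded). One checks that $\nu_t((-\infty,0))=0$ and $\int_D|x|\nu_t(\d x)\le \sup f\cdot t\int|x|\wedge \ldots<\infty$, which follows from boundedness of $f$ together with $\int (|x|\wedge 1)\nu(\d x)<\infty$.

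The key identity is
$$\nu_t((x^{-1},\infty))=\int_0^t \nu\big((f(s,t)^{-1}x^{-1},\infty)\big)\,\id_{\{f(s,t)>0\}}\,\d s=\int_0^t h\big(f(s,t)x\big)\id_{\{f(s,t)>0\}}\d s,$$
where $h(y):=\nu((y^{-1},\infty))$. If $h$ is subadditive on $(0,\infty)$, then for each fixed $s$ with $c=f(s,t)>0$ the map $x\mapsto h(cx)$ is subadditive. Subadditivity survives nonnegative integration in $s$, so $x\mapsto\nu_t((x^{-1},\infty))$ is subadditive. Likewise, concavity of $h$ gives concavity of $x\mapsto h(cx)$, and integrals of concave functions are concave. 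Measurability in $s$ follows since $h$ is monotone and $f$ is continuous.

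Next, consider the L\'evy process $\{\tilde X_u\}_{u\ge0}$ with triplet $(0,\nu_t,\gamma_t)$. Then $Z_t\overset{d}{=}\tilde X_1$. Moreover, for iid copies $X^{(i)}$, linearity of the integral gives $\sum_i\theta_i Z_t^{(i)}\overset{d}{=}\sum_i\theta_i\tilde X_1^{(i)}$ with the $\tilde X^{(i)}$ iid. Applying the ``if'' directions of Theorem \ref{th:main1}(i) and (ii) to $\tilde X$ at time $u=1$ yields \eqref{Sup} and \eqref{Sub} for $Z_t$.

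The main obstacle is the rigorous identification of the law of $Z_t$, namely justifying the Riemann--Stieltjes limit for the bounded-variation path and the exchange of limit and characteristic function. I would handle this by writing $X_s=\gamma_0 s+\sum_{r\le s}\Delta X_r$. This gives $Z_t=\gamma_0\int_0^t f\,\d s+\sum_{r\le t}f(r,t)\Delta X_r$ pathwise, and the exponential formula for Poisson random measures then produces the displayed characteristic function directly.
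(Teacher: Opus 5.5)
Your proof is correct, but it takes a different route from the paper's. The paper never identifies the law of $Z_t$. It discretizes $Z_t$ into the Riemann sums $\sum_{k=1}^m f(t_k^{(m)},t)(X_{t_k^{(m)}}-X_{t_{k-1}^{(m)}})$ and uses Theorem \ref{th:main1} to get \eqref{Sup} (resp.\ \eqref{Sub}) for each increment. It then transfers the order to this nonnegative linear combination of independent increments via Lemma \ref{le:extended}(i) (resp.\ Proposition 1 of \cite{CHSZ26}), and passes to the almost sure limit using Theorem 2.3 of \cite{M25} (resp.\ Proposition 1 of \cite{CHSZ26}).

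You instead show that $Z_t$ is itself infinitely divisible with uncompensated triplet $(0,\nu_t,\gamma_0\int_0^t f(s,t)\,\d s)$, where $\nu_t=\int_0^t T_{f(s,t)}\nu\,\d s$ (with the paper's convention $T_0\nu=0$). The transformed tail $\int_0^t h(f(s,t)x)\id_{\{f(s,t)>0\}}\d s$ inherits subadditivity or concavity from $h$, so Theorem \ref{th:main1} at time $1$ applies directly. This is self-contained within the paper, since no external closure theorems are needed, and it yields more. Because $Z_t$ lands in exactly the class characterized by Theorem \ref{th:main1}, the strictness parts transfer whenever $f(\cdot,t)\not\equiv 0$. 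For example, strict subadditivity of $h$ gives strict subadditivity of the tail of $\nu_t$, since $\{s: f(s,t)>0\}$ has positive Lebesgue measure by continuity. Likewise, if $\nu(\d x)=x^{-1}g(x^{-1})\d x$, then $\nu_t(\d x)=x^{-1}g_t(x^{-1})\d x$ with $g_t(y)=\int_0^t g(f(s,t)y)\id_{\{f(s,t)>0\}}\d s$, so Proposition \ref{Prop:2} gives strong strictness as well.

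The paper's discretize-then-limit argument, by contrast, never needs the law of $Z_t$. It relies only on closure of the orders under monotone (convex) functionals of independent increments and under limits. That is why the same template also covers running maxima and $W_t$ in Propositions \ref{Prop:ruin} and \ref{Prop:integrated}, where no infinitely divisible structure survives.

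One small fix: in bounding $\int_D|x|\nu_t(\d x)$ the constant should be $t\max\{\sup_{s\le t}f(s,t),1\}\int(|x|\wedge 1)\nu(\d x)$, since $|cx|\wedge 1\le \max\{c,1\}(|x|\wedge 1)$.
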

Proposition \ref{Prop:SI} shows that the stochastic dominance properties in \eqref{Sup} and \eqref{Sub} can be preserved under L\'{e}vy-driven stochastic integration with nonnegative deterministic kernels. 
Clearly, $\{Z_t\}_{t\geq 0}$ is no longer a L\'{e}vy process in general since it is a sample-path-dependent transformation of $\{X_t\}_{t\geq 0}$.  Hence, Proposition \ref{Prop:SI}  extends the non-diversification phenomenon and the reverse diversification order established in Theorem \ref{th:main1} beyond the class of L\'{e}vy processes.

 The L\'{e}vy-driven Ornstein-Uhlenbeck (OU) process is defined by
$$\d V_t=-c V_t\d t+\d X_t,$$
where $c>0$ and $X_t$ is a L\'{e}vy process. The L\'{e}vy-driven OU processes constitute an important class of mean-reverting models and have found applications in areas such as financial econometrics, stochastic volatility, and commodity-price modeling; see e.g., \cite{BS01} and \cite{H09}. 
Suppose $V_0=x_0$ for some deterministic $x_0\geq 0$. Then the OU process has the representation
$$V_t=e^{-ct}x_0+\int_0^{t}e^{-c(t-s)}\d X_s,~t\geq 0.$$
Moreover, for $t>0$, the integrated OU process is given by
$$\int_0^t V_s\d s=x_0c^{-1}(1-e^{-ct})+\int_0^tc^{-1}(1-e^{-c(t-s)})\d X_s.$$
Applying Proposition \ref{Prop:SI}, we immediately obtain the following result.
\begin{corollary}\label{Cor:OU} Let  $\{X_t\}_{t\geq 0}$ be a L\'{e}vy process with generating triplet $(0, \nu, \gamma_0)$ satisfying $\int_D|x|\nu(\d x)<\infty$ and $\nu((-\infty,0))=0$.  
 \begin{enumerate}[(i)]
 \item If $\nu((x^{-1},\infty))$ is subadditive on $(0,\infty)$, then \eqref{Sup} holds for  $V_t$ and $\int_0^tV_s\d s$ for all  $t>0$ and all $\boldsymbol \theta\in\Delta_n$;
 \item If $\nu((x^{-1},\infty))$ is concave on $(0,\infty)$, then  \eqref{Sub} holds for $V_t$ and $\int_0^tV_s\d s$ for all $t>0$ and all $\boldsymbol \theta,\boldsymbol\eta\in\Delta_n$ satisfying $\boldsymbol\theta \preceq\boldsymbol\eta$.
 \end{enumerate}
\end{corollary}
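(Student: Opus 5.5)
The plan is to reduce Corollary \ref{Cor:OU} to Proposition \ref{Prop:SI} by writing both quantities as a deterministic constant plus a L\'{e}vy-driven integral with a nonnegative continuous kernel. Throughout, we take the iid copies $V_t^{(i)}$ to be driven by iid copies $X^{(i)}$ of $X$, all started at the same $V_0=x_0$. For $V_t$ take $f(s,t)=e^{-c(t-s)}$, and for $\int_0^tV_s\d s$ take $f(s,t)=c^{-1}(1-e^{-c(t-s)})$. Both are nonnegative and continuous on $[0,t]$ for each $t>0$, using $c>0$.

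With these kernels, the representations stated before the corollary read
$$V_t=a_t+Z_t,\qquad \int_0^tV_s\d s=b_t+\tilde Z_t,$$
where $a_t=e^{-ct}x_0$ and $b_t=x_0c^{-1}(1-e^{-ct})$ are deterministic, and $Z_t,\tilde Z_t$ are the corresponding integrals $\int_0^t f(s,t)\d X_s$. The integrals are pathwise Stieltjes integrals, since $X$ has bounded variation under $\int_D|x|\nu(\d x)<\infty$. They are therefore linear in the driving path. Hence, for iid copies and any $\boldsymbol\theta\in\Delta_n$,
$$\sum_i\theta_iV_t^{(i)}=a_t\sum_i\theta_i+\sum_i\theta_iZ_t^{(i)}=a_t+\sum_i\theta_iZ_t^{(i)},$$
and the analogous identity holds for the integrated process. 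The $Z_t^{(i)}$ are iid copies of $Z_t$, because they are the same measurable functional applied to iid paths.

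Since $\leq_{\mathrm{st}}$ is preserved when the same constant is added to both sides, and $\sum_i\theta_i=\sum_i\eta_i=1$, relation \eqref{Sup} for $V_t$ follows from \eqref{Sup} for $Z_t$. Likewise \eqref{Sub} for $V_t$ follows from \eqref{Sub} for $Z_t$. The same reasoning applies to $\int_0^tV_s\d s$ and $\tilde Z_t$. The required relations for $Z_t$ and $\tilde Z_t$ are exactly parts (i) and (ii) of Proposition \ref{Prop:SI}, whose hypotheses $\int_D|x|\nu(\d x)<\infty$ and $\nu((-\infty,0))=0$ coincide with ours.

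The only point needing care is the linearity step: the convex combination of the OU processes must coincide with the constant plus the combination of the integrals. This holds because the same initial value $x_0$ is used for every copy and the weights sum to one. No genuine obstacle is expected beyond checking the two integral representations, which are stated in the text preceding the corollary.
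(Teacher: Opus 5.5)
Your proposal is correct and matches the paper's argument: the paper obtains the corollary immediately from Proposition \ref{Prop:SI}, applied to the nonnegative continuous kernels $e^{-c(t-s)}$ and $c^{-1}(1-e^{-c(t-s)})$. The deterministic terms $e^{-ct}x_0$ and $x_0c^{-1}(1-e^{-ct})$ are harmless because the weights sum to one. You have simply made explicit the shift-invariance and linearity steps that the paper leaves implicit.
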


Corollary \ref{Cor:OU} shows that the non-diversification phenomenon and the reverse diversification order of the underlying L\'{e}vy process can be  preserved under exponential mean reversion and temporal integration. In particular, the mean-reverting mechanism of the OU process does not necessarily restore diversification benefits.

Interestingly,  \cite{BS01} consider convex combinations of independent OU processes to construct more general volatility models.  When the L\'{e}vy driver is a subordinator so that the OU process remains nonnegative, $V_t$ can be interpreted as a variance factor. In this case, Corollary \ref{Cor:OU} shows that combining independent and identically distributed variance components may lead to a stochastically larger instantaneous variance and  integrated variance under the heavy-tailed L\'{e}vy drivers considered here.
\subsection{Integrated convex functional}
In this subsection, we consider the integrated convex functional of the L\'{e}vy process defined by $$W_t=\int_0^t g(X_s)\d s,~t\geq 0,$$ where $g$ is an increasing and convex function. As the sample paths of a L\'{e}vy process are right-continuous with left limits, they are almost surely bounded on every compact time interval. Since a finite convex function on $\R$ is continuous, $W_t$ is well-defined and finite almost surely for every $t\geq 0$.

We have the following result.
\begin{proposition}\label{Prop:integrated} Let  $\{X_t\}_{t\geq 0}$ be a L\'{e}vy process with generating triplet $(0, \nu, \gamma_0)$ satisfying $\int_D|x|\nu(\d x)<\infty$ and $\nu((-\infty,0))=0$.  
If $g$ is increasing and convex on $\R$, and  $\nu((x^{-1},\infty))$ is subadditive on $(0,\infty)$, then \eqref{Sup} holds for all $W_t$ with $t>0$ and  all $\boldsymbol \theta\in\Delta_n$.
\end{proposition}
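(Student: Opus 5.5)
Fix $t>0$ and $\boldsymbol\theta\in\Delta_n$, and let $\{X^{(i)}_s\}_{s\ge0}$, $i\in[n]$, be iid copies of $\{X_s\}_{s\geq 0}$. The plan is to compare $W_t$ with the integrated functional of the diversified process $S_s:=\sum_{i=1}^n\theta_iX^{(i)}_s$, namely
\[
\widetilde W_t:=\int_0^t g(S_s)\,\d s .
\]
Convexity of $g$ gives, pathwise, $\widetilde W_t\leq \sum_{i=1}^n\theta_i\int_0^t g(X^{(i)}_s)\,\d s=\sum_{i=1}^n\theta_iW^{(i)}_t$ almost surely. It therefore suffices to show $W_t\leq_{\mathrm{st}}\widetilde W_t$, since this combines with the almost sure bound. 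This mirrors the structure of Proposition \ref{Prop:ruin}.

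\textbf{Step 1: a pathwise coupling.} Under the stated assumptions, $\{S_s\}_{s\ge0}$ is a L\'evy process of bounded variation with drift $\gamma_0$, because the drifts combine as $\sum_i\theta_i\gamma_0=\gamma_0$. Its L\'evy measure is $\nu_S(B)=\sum_i\nu(\theta_i^{-1}B)$, restricted to the indices with $\theta_i>0$. Hence
\[
\nu_S((u,\infty))=\sum_i\nu((u/\theta_i,\infty)).
\]
Write $h(x)=\nu((x^{-1},\infty))$. Then $h$ is subadditive on $(0,\infty)$, and since $\sum_i\theta_i/u=1/u$,
\[
\nu_S((u,\infty))=\sum_i h(\theta_i/u)\;\geq\; h(1/u)=\nu((u,\infty)),
\]
with $\nu_S((-\infty,0))=0$. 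Both $X$ and $S$ are therefore a common drift $\gamma_0$ plus a driftless pure-jump subordinator, with L\'evy measures satisfying $\nu_S((u,\infty))\geq\nu((u,\infty))$ for every $u>0$.

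Next I use the standard construction via the inverse L\'evy tails. Take a single Poisson random measure $N$ on $[0,\infty)\times(0,\infty)$ with intensity $\d s\,\d r$. Define
\[
X'_s=\gamma_0 s+\int_{[0,s]\times(0,\infty)}\phi(r)\,N(\d u,\d r),\qquad
S'_s=\gamma_0 s+\int_{[0,s]\times(0,\infty)}\phi_S(r)\,N(\d u,\d r),
\]
where $\phi(r)=\inf\{x>0:\nu((x,\infty))\leq r\}$ and $\phi_S$ is defined analogously from $\nu_S$. The tail domination gives $\phi\leq\phi_S$, so $X'_s\leq S'_s$ for all $s\in[0,t]$ almost surely. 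Moreover, $X'\overset{d}{=}X$ and $S'\overset{d}{=}S$ as processes. The integrals converge because $\int_0^\infty(\phi_S(r)\wedge1)\,\d r=\int(x\wedge1)\,\nu_S(\d x)<\infty$.

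\textbf{Step 2: conclusion.} Since $g$ is increasing, $\int_0^tg(X'_s)\,\d s\leq\int_0^tg(S'_s)\,\d s$ almost surely. The map from a c\`adl\`ag path to $\int_0^t g(\omega_s)\,\d s$ is measurable, so equality in law of the processes transfers to these functionals. This yields $W_t\leq_{\mathrm{st}}\widetilde W_t\leq_{\mathrm{a.s.}}\sum_i\theta_iW^{(i)}_t$, which is \eqref{Sup} for $W_t$.

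\textbf{Main obstacle.} The delicate point is Step 1: the process-level coupling. Theorem \ref{th:main1} only gives marginal dominance at each fixed time, which is not enough for a path functional. The coupling has to be built directly from the L\'evy measures, and one must check that the inverse-tail representation has the correct law when $\nu$ is infinite near $0$. It may be that the proof of Proposition \ref{Prop:ruin} already contains this coupling, in which case I would simply invoke it. Otherwise, an alternative is to split $S=X''+R$, where $X''\overset{d}{=}X$ and $R$ is an independent subordinator with L\'evy measure $\nu_S-\nu$. This split needs $\nu_S-\nu\ge0$ as measures, which is stronger than tail domination, so I would prefer the monotone-coupling route.
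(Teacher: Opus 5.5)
Your proof is correct, but it takes a different route from the paper.

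\textbf{The paper's route.} The paper never couples whole paths. It starts from the fixed-time dominance $X_s\le_{\mathrm{st}}\sum_i\theta_iX^{(i)}_s$ given by Theorem \ref{th:main1}(i). It writes the Riemann sum $m^{-1}t\sum_{k=1}^m g(X_{t_k^{(m)}})$ as an increasing convex function $\phi_m$ of the $m$ independent increments. It then applies Lemma \ref{le:extended}(i), which says \eqref{Sup} is preserved under increasing convex functions of independent variables. Finally it lets $m\to\infty$, using a.s.\ convergence of the Riemann sums and the limit-closure result, Theorem 2.3 of \cite{M25}. The proof of Proposition \ref{Prop:ruin} uses the same discretization template, via Lemma \ref{le:extended}(ii), so it does not contain your coupling.

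\textbf{Your route.} You build one monotone coupling of the entire paths, a pathwise analogue of Lemma \ref{Prop:comparison}, with $X'_s\le S'_s$ for all $s$ simultaneously. The checks you need are all sound:
\begin{itemize}
\item subadditivity of $h$ gives the tail domination $\nu((u,\infty))\le\nu_S((u,\infty))$;
\item that domination gives $\phi\le\phi_S$;
\item $\int(x\wedge 1)\,\nu(\d x)<\infty$, and likewise for $\nu_S$, makes both integrals finite;
\item the mapping theorem for Poisson random measures gives the correct laws of $X'$ and $S'$;
\item $\omega\mapsto\int_0^t g(\omega_s)\,\d s$ is measurable on c\`adl\`ag paths, so equality in law transfers to $W_t$ and $\widetilde W_t$.
\end{itemize}

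\textbf{What each approach buys.} Your argument needs no discretization and no limit-closure theorem. It yields $\Phi(X)\le_{\mathrm{st}}\Phi(S)$ for every increasing measurable path functional $\Phi$, so it covers Proposition \ref{Prop:ruin} and Corollary \ref{Cor:queue} in one stroke. It also shows that convexity of $g$ enters only through the final pathwise Jensen bound $\widetilde W_t\le\sum_i\theta_iW^{(i)}_t$. The paper's argument is more modular: it uses only fixed-time dominance of the increments together with their independence and stationarity. It requires no series representation or law identification, and the same template serves Propositions \ref{Prop:ruin}, \ref{Prop:SI} and \ref{Prop:integrated}.

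\textbf{On your fallback.} You are right to prefer the coupling over the decomposition $S=X''+R$. That decomposition requires $\nu_S-\nu\ge 0$ as measures, which is the stronger density-type situation exploited in the proof of Proposition \ref{Prop:2}.
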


An important special case is $g(x)=(x-K)_+$, for which, $$W_t=\int_0^t (X_s-K)_+\d s,~t\geq 0.$$ This quantity can be interpreted as the cumulative excess loss above the threshold $K$ over the time horizon $[0,t]$, accounting for both the magnitude and duration of the threshold exceedance. Proposition \ref{Prop:integrated} shows that the non-diversification phenomenon of the underlying L\'{e}vy loss process is preserved for this cumulative excess-loss functional.  

\section{An example: diversification effect changes over time}\label{Sec:example}
For L\'{e}vy processes, the stochastic dominance at a fixed time horizon does not necessarily extend uniformly over time. More interestingly, the diversification effect may change with time horizon as demonstrated by the following proposition.
\begin{proposition}\label{ex:time-dependent-dominance}
Let $\{X_t\}_{t\ge0}$ be a compound Poisson process with zero
drift and L\'evy measure $\nu=\sum_{k\ge1}\lambda_k\delta_k$, where
$$
\lambda_k=\frac{1}{k!}
\left.\frac{d^k}{dz^k}\log Q(z)\right|_{z=0},\quad k\ge1,~ \text{with}~ Q(z)=\frac{2-\sqrt{1-z}}{2(1+\sqrt{1-z})},~ |z|\leq 1,
$$
and $\{X_t^{(1)}\}_{t\ge0}$ and $\{X_t^{(2)}\}_{t\ge0}$ be
independent copies of $\{X_t\}_{t\ge0}$. Then the following conclusions hold.
\begin{enumerate}[(i)]
\item For every $\varepsilon\in(0,1/2)$, there exists
$T_\varepsilon<\infty$ such that
$$
X_t\le_{\mathrm{st}}
\lambda X_t^{(1)}+(1-\lambda)X_t^{(2)}
\qquad\text{for all }t\ge T_\varepsilon
\text{ and all }\lambda\in[\varepsilon,1-\varepsilon].
$$
\item For all $\lambda\in (0,1)$ ,
$X_{1/10}\not\le_{\mathrm{st}}
\lambda X_{1/10}^{(1)}+(1-\lambda)X_{1/10}^{(2)}
$.
\end{enumerate}
\end{proposition}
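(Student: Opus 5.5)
The plan is to first identify $Q$ as the probability generating function of $X_1$, and then treat the two parts by different means: a long-time stable limit for (i), and a direct small-time computation at a single threshold for (ii).

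\textbf{Setup.} Since $\nu=\sum_k\lambda_k\delta_k$, we have $\E z^{X_t}=\exp\bigl(t\sum_k\lambda_k(z^k-1)\bigr)$. Moreover $Q(1)=1$ and $\log Q(z)-\log Q(0)=\sum_{k\ge1}\lambda_kz^k$. Hence $\E z^{X_t}=Q(z)^t$, with $P(X_t=0)=Q(0)^t=4^{-t}$.

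First I would verify that $\lambda_k\ge0$ for all $k$, so that $\nu$ is a genuine L\'evy measure. I would do this by expanding $\log(2-\sqrt{1-z})-\log 2-\log(1+\sqrt{1-z})$ in powers of $z$ and checking the signs of the coefficients. This can be done via the series of $\sqrt{1-z}$ and a comparison of the two logarithmic terms, or via an explicit formula for the coefficients.

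\textbf{Part (ii).} The plan is to test the order at the single threshold $x\uparrow1$. On the left, $P(X_t<1)=P(X_t=0)=4^{-t}$. On the right, the event $\{\lambda X^{(1)}_t+(1-\lambda)X^{(2)}_t<1\}$ contains the three lattice points $(0,0)$, $(1,0)$ and $(0,1)$ for every $\lambda\in(0,1)$. Its probability is therefore at least $4^{-2t}(1+2t\lambda_1)$, using $P(X_t=1)=4^{-t}t\lambda_1$.

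To compute $\lambda_1=Q'(0)/Q(0)$, write $s=\sqrt{1-z}$. Then $\d Q/\d s=-3/(2(1+s)^2)$, which equals $-3/8$ at $s=1$, and $\d s/\d z=-1/2$ at $z=0$. So $Q'(0)=3/16$ and $\lambda_1=3/4$.

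At $t=1/10$ the needed inequality $4^{-t}(1+2t\lambda_1)>1$ becomes $1.15>4^{0.1}\approx1.1487$. This holds, though only barely, so the numerics must be checked carefully. It gives $F_{X_{1/10}}(x)<F_{\lambda X^{(1)}+(1-\lambda)X^{(2)}}(x)$ for $x$ slightly below $1$, uniformly in $\lambda\in(0,1)$.

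\textbf{Part (i).} Near $z=1$ we have $Q(z)=1-\tfrac34\sqrt{1-z}+O(1-z)$. Thus $X_1$ lies in the normal domain of attraction of a positive $1/2$-stable law $S$, and $X_t/t^2\Rightarrow S$ as $t\to\infty$. Correspondingly $(\lambda X^{(1)}_t+(1-\lambda)X^{(2)}_t)/t^2\Rightarrow(\sqrt\lambda+\sqrt{1-\lambda})^2S$.

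By Example \ref{Exam:1}(i), the limit comparison is strongly strict, and the constant $(\sqrt\lambda+\sqrt{1-\lambda})^2$ is bounded away from $1$ for $\lambda\in[\varepsilon,1-\varepsilon]$. I would split the scaled range $y=x/t^2$ into three regions.

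\emph{Compact middle region $y\in[\delta,M]$.} Here I would use uniform (P\'olya) convergence of the distribution functions together with the positive gap of the limits. This gap is uniform in $\lambda$ by continuity and compactness.

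\emph{Upper region $y>M$.} I would use the uniform one-big-jump asymptotics $P(X_t>x)\sim\tfrac{3}{4\sqrt\pi}\,t\,x^{-1/2}$, valid uniformly in $x\ge Mt^2$. These can be obtained from a Tauberian or local-limit estimate for the coefficients of $Q^t$. The combination then has tail constant multiplied by $\sqrt\lambda+\sqrt{1-\lambda}>1$.

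\emph{Lower region $y<\delta$, including the lattice effects near $0$.} I would use Chernoff bounds. One side is $E e^{-sX_t}=Q(e^{-s})^t$, and for the combination this becomes $Q(e^{-s\lambda})^tQ(e^{-s(1-\lambda)})^t$. These give an upper bound on the left tail of the combination, to be matched against a matching lower bound for $P(X_t\le x)$ coming from the stable left-tail shape. For very small $x$, the atom $4^{-t}$ versus $16^{-t}$ settles the comparison.

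I expect the main obstacle to be this uniformity at the two ends. Making the tail asymptotics and the left-tail large-deviation comparison uniform in $t\ge T_\varepsilon$ and in $\lambda$ needs quantitative error bounds, not just weak convergence.
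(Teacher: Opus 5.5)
Your part (ii) is correct and is exactly the paper's argument: the lattice outcomes $(0,0)$, $(1,0)$, $(0,1)$, $\lambda_1=3/4$, and $1+\tfrac{3}{20}>4^{1/10}$. In part (i), your middle and upper regions correspond to the paper's stable-limit and one-big-jump cases. The genuine gap is the lower region $x\le\delta t^2$. There both $\p(X_t\le x)$ and $\p(\lambda X_t^{(1)}+(1-\lambda)X_t^{(2)}\le x)$ tend to zero: like $e^{-ct^2/x}$ for $t\ll x\ll t^2$, and like $e^{-ct}$ for $x\asymp t$. At the same time the limiting gap $F_S(y)-F_S(y/\kappa_\lambda^2)$, with $\kappa_\lambda=\sqrt\lambda+\sqrt{1-\lambda}$, vanishes as $y\downarrow0$. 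So the weak limit $X_t/t^2\Rightarrow S$ gives no usable lower bound on $\p(X_t\le x)$. Your ``matching lower bound coming from the stable left-tail shape'' is a heuristic, not a proof step.

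The missing tool is an exponential change of measure. Set $\psi(u)=-\log Q(e^{-u})$ and $\d\p_{t,u}/\d\p=e^{-uX_t+t\psi(u)}$; under $\p_{t,u}$, $X_t$ has mean $t\psi'(u)$ and variance $-t\psi''(u)$. For any $x'<x$ one has $\p(X_t\le x)\ge e^{ux'-t\psi(u)}\,\p_{t,u}(x'\le X_t\le x)$. Choosing $u$ so that $t\psi'(u)$ lies in $(x',x)$, Chebyshev makes the last factor close to one. This is then compared with your Chernoff bound $e^{ux-t\psi_\lambda(u)}$, where $\psi_\lambda(u)=\psi(\lambda u)+\psi((1-\lambda)u)$.

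Moreover, the stable exponents $a^2t^2/(4x)$ versus $a^2\kappa_\lambda^2t^2/(4x)$, with $a=3/2$, are the right comparison only for $t\ll x\ll t^2$. For $x\asymp vt$ the rates are $\sup_{u>0}\{\psi(u)-uv\}$ and $\sup_{u>0}\{\psi_\lambda(u)-uv\}$. These are not of stable form; for instance, the first tends to $\log 4$ as $v\downarrow0$. The comparison there rests on the strict inequality $\psi_\lambda(u)-\psi(u)=\int(1-e^{-\lambda uy})(1-e^{-(1-\lambda)uy})\,\nu(\d y)>0$, which your plan never uses. Your atom comparison $4^{-t}$ versus $16^{-t}$ only reaches $x\le v_0t$ for small $v_0$, so this linear regime is not covered at all.

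You also leave uniformity in $t$ and $\lambda$ as an unresolved obstacle. The paper avoids quantitative error bounds entirely by contradiction and compactness. If (i) fails, take $t_j\to\infty$, $\lambda_j\to\lambda\in[\varepsilon,1-\varepsilon]$ and $x_j$ with $\p(Y_j\le x_j)>\p(X_{t_j}\le x_j)$. Pass to a subsequence along which $x_j/t_j$ and $x_j/t_j^2$ converge in $[0,\infty]$, and obtain a contradiction in four regimes:
\begin{enumerate}
\item $x_j/t_j\to v<\infty$: atom plus Chernoff if $v=0$; tilting versus Chernoff at $u_v$ with $\psi'(u_v)=v$ if $v>0$.
\item $x_j/t_j\to\infty$ and $x_j/t_j^2\to0$: tilting at $u_j\asymp(t_j/x_j)^2$, with rates $a^2/4<a^2\kappa_\lambda^2/4$.
\item $x_j/t_j^2\to v\in(0,\infty)$: the stable limit.
\item $x_j/t_j^2\to\infty$: one big jump.
\end{enumerate}
This also repairs your upper region, where $\p(X_t>x)\sim Ct/\sqrt x$ holds only as $x/t^2\to\infty$, not uniformly on $x\ge Mt^2$ for fixed $M$.

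Two smaller points. First, near $z=1$ one has $Q(z)=1-\tfrac32\sqrt{1-z}+O(1-z)$, not $1-\tfrac34\sqrt{1-z}$. The tail constant is therefore $\tfrac{3}{2\sqrt\pi}$; this is harmless, since only ratios matter. Second, nonnegativity of $\lambda_k$ does not follow from a termwise expansion. In $w=1-\sqrt{1-z}$ one has $\log Q=\log(1+w)-\log 2-\log(2-w)$, whose $w^2$-coefficient is $-3/8$. The paper instead differentiates in $z$, obtaining $\tfrac12\{(1-w^2)^{-1}+((1-w)(2-w))^{-1}\}$, whose $w$-coefficients are nonnegative. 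It then composes with $w(z)$, which also has nonnegative coefficients.
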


For the compound Poisson process in Proposition~\ref{ex:time-dependent-dominance},
stochastic dominance fails at $t=1/10$ for every $\lambda\in(0,1)$,
but holds for all sufficiently large $t$ once $\lambda$ is fixed, demonstrating that  stochastic dominance may vary with the time horizon.
\section{Conclusion}\label{Sec:conc}
In this paper, we characterize the L\'{e}vy measures of L\'{e}vy processes exhibiting the non-diversification phenomenon or the reverse diversification order uniformly over all time horizons. For L\'{e}vy processes with bounded variation sample paths, we characterize the structure of the L\'{e}vy measures in terms of  subadditivity and concavity of the transformed L\'{e}vy tails. For general symmetric L\'{e}vy processes without a Gaussian component, we find that the symmetric $1$-stable process is the only nontrivial process exhibiting either phenomenon. 

We also study  multidimensional L\'{e}vy processes with heterogeneous and dependent components. For  L\'{e}vy processes with bounded variation sample paths, we characterize the corresponding L\'{e}vy measures. Without this restriction, we show that the L\'{e}vy measure of any symmetric L\'{e}vy process exhibiting either phenomenon is projection-invariant. For multidimensional $\alpha$-stable processes, although a similarly simple characterization is unavailable in general,  we derive  explicit characterizations of L\'{e}vy measures for the processes exhibiting the two adverse diversification phenomena when  the L\'{e}vy measures are supported on the positive orthant. Moreover, we characterize the L\'{e}vy measures for  L\'{e}vy processes with heterogeneous $\alpha$-stable components and   multidimensional compound Poisson processes.

Finally, we extend the  two adverse diversification phenomena  to  sample-path-dependent transformations of L\'{e}vy processes. We show that the non-diversification phenomenon is preserved for running maxima and integrals of increasing convex functionals, while both adverse diversification phenomena are preserved under L\'{e}vy-driven  stochastic integrals with nonnegative deterministic kernels. Applications to ruin theory, storage models and stochastic volatility are discussed. 

An interesting direction for future research is to obtain characterizations for general L\'{e}vy processes with infinite variation and for multidimensional L\'{e}vy processes with L\'{e}vy measures not restricted to particular orthants.  Moreover, further applications of uniform-in-time diversification properties to sample-path-dependent risk models also deserve investigation.

\section{Appendix}
In this appendix, we provide the proofs of all results and add one additional result for the multivariate $\alpha$-stable distributions.
\subsection{Proofs}
In this subsection, we provide all the proofs of the results in this paper.
Before we give our proofs, we first show an auxiliary result, which will play an important role in our proofs.
For a L\'{e}vy measure $\nu$, and $b\neq0$, let $T_b\nu(B)=\nu(b^{-1}B)$ for all $B\in\mathcal B(\R)$, where $\mathcal B(\R)$ denotes the collection of all Borel sets. We use the convention that $T_b\nu(B)=0$ for all  $B\in\mathcal B(\R)$ when $b=0$. Moreover, let $\nu^+(B)=\nu(B\cap(0,\infty))$ and $\nu^-(B)=\nu(B\cap(-\infty,0))$ for all $B\in\mathcal B(\R)$.  Applying Theorem 2.2 of \cite{ST93}, we arrive at the following result.
\begin{lemma}\label{Prop:comparison}
   Let $X_1$ and $X_2$ be infinitely divisible random variables with generating triplets $(0,\nu_i,\gamma_0^{(i)})$, $i\in [2]$, in the
uncompensated form, where $\int_D |x|\nu_i(dx)<\infty$, $i\in [2]$. 
If $\gamma_0^{(1)}\leq \gamma_0^{(2)}$,
$\nu_1((x,\infty))\leq \nu_2((x,\infty))$ on $(0,\infty)$ and $\nu_1((-\infty,x))\geq \nu_2((-\infty,x))$ on $(-\infty,0)$, then 
$X_1\leq_{\mathrm{st}}X_2$.
\end{lemma}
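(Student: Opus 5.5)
We plan to prove the lemma by an explicit monotone coupling rather than by quoting Theorem 2.2 of \cite{ST93} as a black box. The idea is to split each $X_i$ into its drift, its positive-jump part and its negative-jump part, and then to couple the jump parts through the generalized inverses of their tail functions. Because the L\'{e}vy measures satisfy $\int_D|x|\nu_i(\d x)<\infty$ and the triplets are uncompensated, we can write, in distribution,
$$X_i\overset{d}{=}\gamma_0^{(i)}+P_i-N_i,\qquad i\in[2].$$
Here $P_i\geq 0$ and $N_i\geq 0$ are independent. The variable $P_i$ is infinitely divisible with triplet $(0,\nu_i^+,0)$ in uncompensated form. The variable $N_i$ is infinitely divisible with triplet $(0,\widetilde\nu_i,0)$, where $\widetilde\nu_i(B)=\nu_i^-(-B)$. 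This decomposition follows directly from factorizing the characteristic function. Since $\leq_{\mathrm{st}}$ is preserved under sums of independent components and under adding constants, it will suffice to prove
$$P_1\leq_{\mathrm{st}}P_2\qquad\text{and}\qquad N_2\leq_{\mathrm{st}}N_1.$$
Together with $\gamma_0^{(1)}\leq\gamma_0^{(2)}$, these two relations give $X_1\leq_{\mathrm{st}}X_2$.

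For the positive parts we use a Ferguson--Klass (LePage) representation. Let $\Gamma_1<\Gamma_2<\cdots$ be the arrival times of a unit-rate Poisson process on $(0,\infty)$. For a L\'{e}vy measure $\mu$ on $(0,\infty)$, define
$$\varphi_\mu(s)=\inf\{x>0:\mu((x,\infty))\leq s\},\qquad s>0.$$
The tail $x\mapsto\mu((x,\infty))$ is right-continuous and decreasing, so $\{s:\varphi_\mu(s)>x\}=(0,\mu((x,\infty)))$. Hence the image of Lebesgue measure under $\varphi_\mu$, restricted to $\{\varphi_\mu>0\}$, is exactly $\mu$. By the mapping theorem, $\{\varphi_\mu(\Gamma_k)\}_k$ is then a Poisson random measure with intensity $\mu$. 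Because $\int(x\wedge1)\mu(\d x)<\infty$, the sum $\sum_k\varphi_\mu(\Gamma_k)$ converges almost surely, and by Campbell's formula its Laplace transform is $\exp\{-\int(1-e^{-\lambda x})\mu(\d x)\}$. The sum therefore has triplet $(0,\mu,0)$ in uncompensated form.

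We apply this construction with $\mu=\nu_1^+$ and $\mu=\nu_2^+$, using the same sequence $\{\Gamma_k\}$. The hypothesis $\nu_1((x,\infty))\leq\nu_2((x,\infty))$ for all $x>0$ gives $\varphi_{\nu_1^+}\leq\varphi_{\nu_2^+}$ pointwise. Consequently
$$\sum_k\varphi_{\nu_1^+}(\Gamma_k)\leq\sum_k\varphi_{\nu_2^+}(\Gamma_k)\quad\text{a.s.},$$
and this almost sure ordering yields $P_1\leq_{\mathrm{st}}P_2$. For the negative parts, the hypothesis $\nu_1((-\infty,x))\geq\nu_2((-\infty,x))$ for $x<0$ is the same as $\widetilde\nu_2((y,\infty))\leq\widetilde\nu_1((y,\infty))$ for all $y>0$. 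The same coupling therefore gives $N_2\leq_{\mathrm{st}}N_1$, and combining the two comparisons completes the argument.

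The only step needing care is the generalized-inverse identity $\mathrm{Leb}\{s:\varphi_\mu(s)>x\}=\mu((x,\infty))$. It must hold even when $\mu$ has atoms or has bounded support (so that $\varphi_\mu(s)=0$ for large $s$), and when $\mu$ has infinite mass near $0$. We expect right-continuity of the tail function to handle all of these cases. The remaining steps are routine.
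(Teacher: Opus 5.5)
Your proof is correct, and its skeleton matches the paper's. Both write $X_i$ as the drift $\gamma_0^{(i)}$, plus a nonnegative part with L\'{e}vy measure $\nu_i^+$, minus an independent nonnegative part with L\'{e}vy measure $T_{-1}\nu_i^-$. Both then reduce to $P_1\leq_{\mathrm{st}}P_2$ and $N_2\leq_{\mathrm{st}}N_1$ and recombine using independence.

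The genuine difference is in how the one-sided comparison is obtained. The paper cites Theorem 2.2 of \cite{ST93}. You prove it from scratch with the inverse-tail (Ferguson--Klass/LePage) coupling driven by a single unit-rate Poisson sequence.

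The delicate step you flag does go through. For $x>0$, right-continuity of the tail gives $\varphi_\mu(s)>x$ if and only if $s<\mu((x,\infty))$. So the image of Lebesgue measure agrees with $\mu$ on the sets $(x,\infty)$, $x>0$. These sets form a $\pi$-system on which both measures are finite, so the two measures coincide on $(0,\infty)$. Atoms and infinite mass near $0$ cause no trouble.

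One small correction: $\varphi_\mu$ vanishes exactly for $s\geq\mu((0,\infty))$. So the degenerate case is finite total mass, not bounded support. Those points contribute $0$ to the sum either way.

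What your route buys is self-containedness and a stronger conclusion. Take independent Poisson sequences for the positive and negative parts, and use the same ones for $i=1,2$. This gives a coupling with $X_1\leq X_2$ almost surely. The same construction with a Poisson random measure on $[0,\infty)\times(0,\infty)$ orders the increments of entire L\'{e}vy paths. That would yield the running-maxima comparison later in the paper without the discretization argument.

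The paper's route is shorter, at the price of delegating the substance of the lemma to the cited theorem.
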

\begin{proof}
For $i\in [2]$, we can write $X_i\overset{d}{=}Y_i-Z_i$, where
$Y_i$ and $Z_i$ are independent infinitely divisible random variables
with uncompensated generating triplets $(0,\nu_i^+,\gamma_0^{(i)})$ and
$(0,T_{-1}\nu_i^-,0)$, respectively.
Note that  $\nu_1^+,\nu_2^+, T_{-1}\nu_1^{-}$ and $T_{-1}\nu_2^{-}$ are all supported on $(0,\infty)$. Clearly, for $x>0$, we have $\nu_1^+((x,\infty))\leq \nu_2^+((x,\infty))$ and $T_{-1}\nu_1^-((x,\infty))=\nu_1((-\infty,-x))\geq \nu_2((-\infty,-x))=T_{-1}\nu_2^-((x,\infty))$.
 By Theorem 2.2 of \cite{ST93}, we have $Y_1\leq_{\mathrm{st}}Y_2$ and $Z_2\leq_{\mathrm{st}}Z_1$. 
Using the independent assumption, we have $Y_1-Z_1\leq_{\mathrm{st}}Y_2-Z_2$, implying $X_1\leq_{\mathrm{st}}X_2$.  This completes the proof.
\end{proof}

{\it \bf Proof of Theorem \ref{th:main1}}. Let $\{X_t^{(1)}\}_{t\geq 0},\dots,\{X_t^{(n)}\}_{t\geq 0}$ be iid copies of $\{X_t\}_{t\geq 0}$. Note that if $\int_D|x|\nu(\d x)<\infty$, then for $t>0$, $X_t$ is infinitely divisible with generating triplet $(0, t\nu, t\gamma_0)$.
  For $t>0$, direct computation shows that \begin{align}\label{Eq:CF}\hat{\mu}_{\sum_{i=1}^n\theta_i X_t^{(i)}}(z)&=\exp\left[it\gamma_0z+\sum_{i=1}^nt\int_{\R} \left(e^{iz\theta_ix}-1 \right)\nu(\d x)\right]\nonumber\\
    &=\exp\left[it\gamma_0z+t\int_{\R} \left(e^{izx}-1 \right)\sum_{i=1}^n T_{\theta_i}\nu(\d x)\right],~z\in\R.
    \end{align}
    Hence $\sum_{i=1}^n\theta_i X_t^{(i)}$ follows an infinitely divisible distribution with generating triplet $(0,t\sum_{i=1}^n T_{\theta_i}\nu,t\gamma_0)$. The generating triplet of $X_t$ is $(0,t\nu,t\gamma_0)$. All generating triplets in this proof are written in the
uncompensated form. Since the weights sum to one, $X_t$ and
all weighted sums with weights in $\Delta_n$ have the same
uncompensated drift $t\gamma_0$.
    
We first focus on (i). We start with the `if' part.
Let $h_1(x)=\nu((x^{-1},\infty))$ for $x\in (0,\infty)$. Clearly, $h_1(0+)=0$. Then  $h_1$ is subadditive and continuous on $(0,\infty)$.  Using the subadditivity of $h_1$, we have, for $x\in (0,\infty)$,
    \begin{align}\label{Eq:h_1}\sum_{i=1}^nT_{\theta_i}\nu((x,\infty))&=\sum_{\theta_i>0}\nu((\theta_i^{-1}x,\infty))=\sum_{\theta_i> 0}h_1(\theta_i x^{-1})\nonumber\\
    &\geq h_1\left(x^{-1}\sum_{\theta_i>0}\theta_i\right)=\nu((x,\infty)).
    \end{align} 
    
Note also that $\sum_{i=1}^nT_{\theta_i}\nu((-\infty,x))=\nu((-\infty,x))=0$ for $x\in (-\infty,0)$. 
    Consequently, in light of Lemma \ref{Prop:comparison}, \eqref{Sup} holds for all $X_t$ with $t>0$ and all $\boldsymbol\theta\in \Delta_n$. This shows the ``if'' part.

 For the ``only if'' part, note that $X_t\leq_{\mathrm{st}} \theta_1X_t^{(1)}+\dots+\theta_nX_t^{(n)}$ for all $t>0$ implies that 
 $\mu_{X_t}((x,\infty))\leq \mu_{\sum_{i=1}^n\theta_iX_t^{(i)}}((x,\infty))$ for all $x>0$ and $t>0$. Using Corollary 8.9 of \cite{S99}, for $x>0$,  we have if $\nu(\{x\})=0$, then $$\lim_{t\downarrow 0}t^{-1}\mu_{X_t}((x,\infty))=\nu((x,\infty)),$$
 and if $\sum_{\theta_i>0}\nu(\{\theta_i^{-1}x\})=0$, then  
$$\lim_{t\downarrow 0}t^{-1}\mu_{\sum_{i=1}^n\theta_iX_t^{(i)}}((x,\infty))=\sum_{\theta_i>0}\nu((\theta_i^{-1}x,\infty)).$$
Hence, for $x>0$, if $\nu(\{x\})+\sum_{\theta_i>0}\nu(\{\theta_i^{-1}x\})=0$, then $\nu((x,\infty))\leq \sum_{\theta_i>0}\nu((\theta_i^{-1}x,\infty))$. Note that $\{x>0: \nu(\{x\})+\sum_{\theta_i>0}\nu(\{\theta_i^{-1}x\})>0\}$ is a countable set. Using the right continuity of $\nu((x,\infty))$, we have $\nu((x,\infty))\leq \sum_{\theta_i>0}\nu((\theta_i^{-1}x,\infty))$ holds for all $x>0$, implying that $\nu((x^{-1},\infty))$ is subadditive on $(0,\infty)$.

Moreover, note that  $t^{-1}\mu_{X_t}((-\infty,x))\geq t^{-1}\mu_{\sum_{i=1}^n\theta_iX_t^{(i)}}((-\infty,x))$ for all $x<0$ and $t>0$.  Letting $t\downarrow 0$, we can similarly obtain  $\nu((-\infty,x))\geq \sum_{\theta_i>0}\nu((-\infty,\theta_i^{-1}x))$ for all $x<0$, implying $\nu((-\infty,x^{-1}))$ is superadditive on $(-\infty,0)$.
 Let $h_2(x)=\nu((-\infty, x^{-1}))$ for $x\in (-\infty,0)$. Clearly, $h_2$ is superadditive and decreasing on $(-\infty,0)$ and $h_2(0-)=0$. Note that \begin{align*}
 \int_{-1}^0|x|\nu(\d x)&=\int_{-1}^0\nu([-1,x))\d x=\int_{-1}^0\nu((-\infty,x))\d x-\nu((-\infty,-1))\\
 &=\int_{-\infty}^{-1}x^{-2}h_2(x)\d x-\nu((-\infty,-1)).
 \end{align*}
 Hence,  $ \int_{-1}^0|x|\nu(\d x)<\infty$ is equivalent to $\int_{-\infty}^{-1}x^{-2}h_2(x)\d x<\infty$. Suppose there exists $x_0<0$ such that $h_2(x_0)>0$. Then for $x\in ((k+1)x_0,kx_0)$ with $k\in \mathbb N_+$, using the superadditivity and monotonicity of $h_2$, we have 
 $h_2(x)\geq (k-1)h_2(x_0)+h_2(x-(k-1)x_0)\geq kh_2(x_0)$. This implies
 \begin{align*}
  \int_{-\infty}^{-1}x^{-2}h_2(x)\d x\geq \sum_{k=k_0}^\infty \int_{(k+1)x_0}^{kx_0}x^{-2}kh_2(x_0)\d x=\sum_{k=k_0}^\infty \frac{-h_2(x_0)}{(k+1)x_0}=\infty,  
 \end{align*}
 where $k_0\geq -x_0^{-1}$, leading to a contradiction. Hence, $h_2(x)=0$ for all $x\in (-\infty,0)$.
We obtain the ``only if'' part.

Note that if $h_1$ is strictly subadditive, it follows from \eqref{Eq:h_1} that for $\boldsymbol\theta\in \Delta_n$ satisfying $\max_{i=1}^n\theta_i<1$, $\sum_{i=1}^nT_{\theta_i}\nu((x,\infty))>\nu((x,\infty))$ for all $x>0$, implying $\sum_{i=1}^nT_{\theta_i}\nu\neq \nu$. It follows from \eqref{Sup} and Theorem 8.1 of \cite{S99}  that \eqref{Sup} holds strictly for $t>0$.

 We next focus on (ii).  We start with the ``if'' part.
Recall that $h_1(x)=\nu((x^{-1},\infty))$. Then $h_1$ is  continuous and concave on $[0,\infty)$ with $h_1(0):=h_1(0+)=0$.
It follows that, for $x>0$ and  $\boldsymbol\theta,\boldsymbol\eta\in \Delta_n$ satisfying $\boldsymbol\theta \preceq\boldsymbol\eta$ ,
\begin{align}\label{Eq:h11}
    \sum_{i=1}^nT_{\theta_i}\nu((x,\infty))=\sum_{\theta_i>0}h_1(\theta_ix^{-1})~\text{and}~ \sum_{i=1}^nT_{\eta_i}\nu((x,\infty))=\sum_{\eta_i>0}h_1(\eta_ix^{-1}).
\end{align}
 For $x>0$, $\boldsymbol\theta, \boldsymbol\eta\in \Delta_n$ satisfying $\boldsymbol\theta\preceq \boldsymbol\eta$, we have $(\theta_1x^{-1},\dots,\theta_nx^{-1})\preceq (\eta_1x^{-1},\dots,\eta_nx^{-1})$. Using the concavity $h_1$ and $h_1(0)=0$, we have $\sum_{\theta_i>0}h_1(\theta_i x^{-1})\geq \sum_{\eta_i>0}h_1(\eta_i x^{-1})$, which implies $t\sum_{i=1}^nT_{\theta_i}\nu((x,\infty))\geq t\sum_{i=1}^nT_{\eta_i}\nu((x,\infty))$  for all  $x\in (0,\infty)$ and $t>0$.

Note also that $\sum_{i=1}^nT_{\theta_i}\nu((-\infty,x))=\sum_{i=1}^nT_{\eta_i}\nu((-\infty,x))=0$  for all  $x\in (-\infty,0)$.
    Consequently, it follows from Lemma \ref{Prop:comparison} that \eqref{Sub} holds for all $X_t$ with $t>0$ and all  $\boldsymbol\theta,\boldsymbol\eta\in \Delta_n$ satisfying $\boldsymbol\theta \preceq\boldsymbol\eta$ .
 
 Next, we focus on the ``only if'' part. For any  $\boldsymbol\theta,\boldsymbol\eta\in \Delta_n$ satisfying $\boldsymbol\theta \preceq\boldsymbol\eta$ ,  $\eta_1X_t^{(1)}+\dots+\eta_nX_t^{(n)}\leq_{\mathrm{st}} \theta_1X_t^{(1)}+\dots+\theta_nX_t^{(n)}$ for all $t>0$ implies that 
 $\mu_{\sum_{i=1}^n\eta_iX_t^{(i)}}((x,\infty))\leq \mu_{\sum_{i=1}^n\theta_iX_t^{(i)}}((x,\infty))$ for all $x\in\R$ and $t>0$. Using Corollary 8.9 of \cite{S99}, for $x>0$,  we have if $\sum_{\eta_i>0}\nu(\{\eta_i^{-1}x\})=0$,, then $$\lim_{t\downarrow 0}t^{-1}\mu_{\sum_{i=1}^n\eta_iX_t^{(i)}}((x,\infty))=\sum_{\eta_i>0}\nu((\eta_i^{-1}x,\infty)),$$
 and if $\sum_{\theta_i>0}\nu(\{\theta_i^{-1}x\})=0$, then  
$$\lim_{t\downarrow 0}t^{-1}\mu_{\sum_{i=1}^n\theta_iX_t^{(i)}}((x,\infty))=\sum_{\theta_i>0}\nu((\theta_i^{-1}x,\infty)).$$
Hence, for $x>0$, if $\sum_{\eta_i>0}\nu(\{\eta_i^{-1}x\})+\sum_{\theta_i>0}\nu(\{\theta_i^{-1}x\})=0$, then $\sum_{\eta_i>0}\nu((\eta_i^{-1}x,\infty))\leq \sum_{\theta_i>0}\nu((\theta_i^{-1}x,\infty))$. Using the right continuity of $\nu((x,\infty))$, we have $\sum_{\eta_i>0}\nu((\eta_i^{-1}x,\infty))\leq \sum_{\theta_i>0}\nu((\theta_i^{-1}x,\infty))$ for all $x>0$. 
Note that for $0<\lambda_1\leq \lambda_2\leq 1/2$, we have $(\lambda_2,1-\lambda_2,\dots,0)\preceq (\lambda_1,1-\lambda_1,\dots,0)$. Hence, we have $\nu((\lambda_2^{-1}x^{-1},\infty))+\nu((1-\lambda_2)^{-1}x^{-1},\infty))\geq \nu((\lambda_1^{-1}x^{-1},\infty)+\nu((1-\lambda_1)^{-1}x^{-1},\infty))$ for all $x>0$ and $0<\lambda_1\leq \lambda_2\leq 1/2$. Taking $\lambda_2=1/2$, $x=a+b$, and
$\lambda_1=\min\{a,b\}/(a+b)$ in the preceding inequality yields
$ 
2h_1\left(\frac{a+b}{2}\right)\geq h_1(a)+h_1(b),
  a,b>0.
$ 
Thus, $h_1$ is midpoint concave. Since $h_1$ is finite and
nondecreasing, and hence $\nu((x^{-1},\infty))$ is concave on $(0,\infty)$.

For $x<0$ and $t>0$, we have $t^{-1}\mu_{\sum_{i=1}^n\eta_iX_t^{(i)}}((-\infty,x))\geq t^{-1}\mu_{\sum_{i=1}^n\theta_iX_t^{(i)}}((-\infty,x))$. Letting $t\downarrow 0$, using Corollary 8.9 of \cite{S99}, we have 
$\sum_{\eta_i>0}\nu((-\infty, \eta_i^{-1}x))\geq \sum_{\theta_i>0}\nu((-\infty, \theta_i^{-1}x))$ for all $x<0$. This implies the convexity of $\nu((-\infty,x^{-1}))$ on $(-\infty,0)$. Let $h_2(x)=\nu((-\infty, x^{-1}))$ for $x\in (-\infty,0)$. Clearly, $h_2$ is convex and decreasing on $(-\infty,0)$ and $h_2(0-)=0$. Hence, we have, for $x,y\in (-\infty,0)$, $h_2(0-)-h_2(x)\geq h_2(y)-h_2(x+y)$, implying $h_2(x+y)\geq h_2(x)+h_2(y)$. This means that $h_2$ is superadditive on $(-\infty,0)$. Using the conclusion in the proof of (i), we have $h_2(x)=0$ for all $x\in (-\infty,0)$. We obtain the ``only if'' part. 

Finally, we show that if $h_1$ is nonlinear and concave on $(0,\infty)$, then  \eqref{Sub} holds strictly whenever $t>0$ and $\boldsymbol\theta \prec\boldsymbol\eta$.  Suppose, by contradiction, that \eqref{Sub} holds as an equality in distribution for some  $t_0>0$ and fixed $\boldsymbol\theta \prec\boldsymbol\eta$. We say $\boldsymbol\theta$ is a $T$-\emph{transform} of  $\boldsymbol\eta$ if for some $\lambda\in [0,1]$ and $i,j\in [n]$ with $i\neq j$, $\theta_i=\lambda\eta_i+(1-\lambda)\eta_j$, $\theta_j=(1-\lambda)\eta_i+\lambda\eta_j$, and $\theta_k=\eta_k$ for $k\in [n]\setminus\{i,j\}$. Using the result in Section 1.A.3 of \cite{MOA11}, $\boldsymbol\theta$ can be obtained from $\boldsymbol\eta$ by  a finite number of $T$-transforms, i.e., $\boldsymbol\theta=T_m\circ T_{m-1}\circ\dots\circ T_1(\boldsymbol\eta)$.  Then there exists $m_1$ such that $T_{m_1}\circ\dots\circ T_{1}(\boldsymbol\eta)\prec T_{m_1-1}\circ\dots\circ T_{1}(\boldsymbol\eta)$. One can easily check that \eqref{Sub} holds as an equality in distribution for  $t_0$ and $\boldsymbol\theta':=T_{m_1}\circ\dots\circ T_{1}(\boldsymbol\eta) \prec\boldsymbol\eta':=T_{m_1-1}\circ\dots\circ T_{1}(\boldsymbol\eta)$. As $\boldsymbol\theta' \prec\boldsymbol\eta'$, there exists  $\lambda\in (0,1)$ and $i,j\in [n]$ with $i\neq j$ satisfying $\eta_i'\neq \eta_j'$ such that  $\theta_i'=\lambda\eta_i'+(1-\lambda)\eta_j'$, $\theta_j'=(1-\lambda)\eta_i'+\lambda\eta_j'$, and $\theta_k'=\eta_k'$ for $k\in [n]\setminus\{i,j\}$. It follows from Theorem 8.1 of \cite{S99} that   $\sum_{i=1}^nT_{\theta_i'}\nu=\sum_{i=1}^nT_{\eta_i'}\nu$, which together with  \eqref{Eq:h11} implies $\sum_{i=1}^n h_1(\theta_i' x)=\sum_{i=1}^n h_1(\eta_i' x)$ for all $x>0$. Hence, we have $h_1((\lambda\eta_i'+(1-\lambda)\eta_j')x)+h_1(((1-\lambda)\eta_i'+\lambda\eta_j')x)=h_1(\eta_i'x)+h_1(\eta_j' x)$, which combined with  the concavity of $h_1$ implies $h_1((\lambda\eta_i'+(1-\lambda)\eta_j')x)=\lambda h_1(\eta_i'x)+(1-\lambda)h_1(\eta_j'x)$. It follows from the concavity of $h_1$ that $h_1$ is affine on $[\min(\eta_i',\eta_j') x,\max(\eta_i',\eta_j') x]$ for all $x>0$. Hence, $h_1$ is affine on $(0,\infty)$. Using the fact $h_1(0+)=0$, $h_1$ is  linear on $(0,\infty)$, leading to a contradiction. Hence, if $h_1$ is nonlinear and concave on $(0,\infty)$, then  \eqref{Sub} holds strictly whenever $t>0$ and $\boldsymbol\theta \prec\boldsymbol\eta$.   We complete the proof.
\qed

{\it\bf Proof of Proposition \ref{Prop:onlyif}}. The proof of this proposition is given in the proof of the ``only if'' parts of Theorem \ref{th:main1}. \qed

{\it\bf Proof of Proposition \ref{Prop:0}}.
It follows from Theorem 25.3 of \cite{S99} that $\mathbb E(|X|)=\infty$ if and only if $\int_{|x|>1} |x|\nu(\d x)=\infty$, which is equivalent to $\int_1^\infty \nu((x,\infty))\d x=\infty$. Let $h(x)=\nu((x^{-1},\infty))$ on $(0,\infty)$. Clearly, $h(0+)=0$ and $h$ is subadditive and increasing on $(0,\infty)$, which implies that $h$ is continuous on $(0,\infty)$. If $h(x_0)=0$ for some $x_0>0$, then using the subadditivity, we have $h(x)=0$ for all $x\in (0,\infty)$, which contradicts $\lim_{x\to\infty}h(x)=\nu((0,\infty))>0$. Hence $h(x)>0$ for all $x\in (0,\infty)$.  For $x\in ((k+1)^{-1}, k^{-1}]$ with $k\in \mathbb N_+$, we have $h(1)\leq h((k+1)x)\leq (k+1)h(x)$, implying $h(x)\geq (k+1)^{-1}h(1)$.  Direct computation shows
\begin{align*}\int_1^\infty \nu((x,\infty))\d x&=\int_0^1 x^{-2}h(x)\d x=\sum_{k=1}^\infty\int_{(k+1)^{-1}}^{k^{-1}}x^{-2}h(x)\d x\\
&\geq \sum_{k=1}^\infty h(1)(k+1)^{-1}\int_{(k+1)^{-1}}^{k^{-1}}x^{-2}\d x\\
&=\sum_{k=1}^\infty h(1)(k+1)^{-1}=\infty.
\end{align*}
Hence, $\mathbb E(|X|)=\infty$. \qed

 {\it \bf Proof of Proposition \ref{Prop:2}}. Let $\{X_t^{(1)}\}_{t\geq 0},\dots,\{X_t^{(n)}\}_{t\geq 0}$ be iid copies of $\{X_t\}_{t\geq 0}$.
 First, we consider (i). For $\boldsymbol\theta\in\Delta_n$, using the subadditivity of $g$, we have
    \begin{align*}\sum_{\theta_i>0}\nu((\theta_i^{-1}x^{-1},\infty))&=\sum_{\theta_i>0}\int_{\theta_i^{-1}x^{-1}}^\infty t^{-1}g(t^{-1})\d t
    =\int_{x^{-1}}^\infty t^{-1}\sum_{\theta_i>0}g(\theta_it^{-1})\d t\\
    &\geq \int_{x^{-1}}^\infty t^{-1}g(t^{-1})\d t=\nu((x^{-1},\infty)),~x>0.
    \end{align*}
    Hence, $\nu((x^{-1},\infty))$ is subadditive on $(0,\infty)$.
    It follows from (i) of Theorem \ref{th:main1} that \eqref{Sup} holds.

    Next, we suppose  $g$ is strictly subadditive on $(0,\infty)$.  By \eqref{Eq:CF}, $\sum_{i=1}^n\theta_i X_t^{(i)}$ follows an infinitely divisible distribution with generating triplet $(0,t\sum_{i=1}^n T_{\theta_i}\nu,t\gamma_0)$.  For  $\boldsymbol\theta\in \Delta_n$ satisfying  $\max_{i=1}^n\theta_i<1$ and a Borel measurable set $B\subset (0,\infty)$ with a positive Lebesgue measure, we have 
    \begin{align*}\sum_{i=1}^nT_{\theta_i}\nu(B)&=\sum_{\theta_i>0} \nu(\theta_i^{-1}B)=\sum_{\theta_i>0}\int_{\theta_i^{-1}B}x^{-1}g(x^{-1})\d x\\
    &=\int_{B}\sum_{\theta_i>0}x^{-1}g(\theta_ix^{-1})\d x>\int_{B}x^{-1}g(x^{-1})\d x\\
    &=\nu(B).
    \end{align*}
     Denote by $\nu_1=\sum_{i=1}^nT_{\theta_i}\nu-\nu\geq 0$. There exist independent  L\'{e}vy processes $\{Y_t\}_{t\geq 0}$ and  $\{Z_t\}_{t\geq 0}$ with generating triplets $(0,\nu_1,0)$ and  $(0,v,\gamma_0)$ respectively. Hence, $\sum_{i=1}^n\theta_iX_t^{(i)}\overset{d}{=}Y_t+Z_t$ and $X_t\overset{d}{=}Z_t$, where $\overset{d}{=}$ means equality in distribution.
    
    By $\nu((-\infty,0))=0$ and $\int_{D}|x|\nu(\d x)<\infty$, we have  $\nu_1((-\infty,0))=0$ and  $\int_{D}|x|\nu_1(\d x)<\infty$.   Note also that $\nu_1\neq 0$. Applying Theorem 19.3 of \cite{S99}, we have  $Y_t\geq 0$ a.s. and $\mathbb P(Y_t>0)>0$ for $t>0$. In light of Theorem 24.3 of \cite{S99}, the support of $Y_t$ with $t>0$ is unbounded from above. For $x\in (\essinf X_t, \esssup X_t)$, there exists $a>0$ such that $F_{X_t}(x-a)<F_{X_t}(x)$ and $F_{Y_t}(a)<1$. Hence, for $x\in (\essinf X_t, \esssup X_t)$, we have
    $$F_{X_t}(x)-F_{\sum_{i=1}^n\theta_iX_t^{(i)}}(x)=\int_{[0,\infty)} (F_{X_t}(x)-F_{X_t}(x-y))\d F_{Y_t}(y)>0.$$
    This implies that \eqref{Sup} is strongly strict. We establish claim (i).

    Next, we focus on (ii).  If $g$ is nonnegative and concave on $(0,\infty)$, then $g(0+)$ exists and is nonnegative. This means that $g$ is nonnegative and concave on $[0,\infty)$ with $g(0)=g(0+)\geq 0$. For $x\in (0,\infty)$, $\boldsymbol\theta, \boldsymbol\eta\in \Delta_n$ satisfying $\boldsymbol\theta\preceq \boldsymbol\eta$, we have $(\theta_1x,\dots,\theta_nx)\preceq (\eta_1x,\dots,\eta_nx)$. Using the  concavity $g$, we have $\sum_{\theta_i>0} g(\theta_i x)+k_1g(0)\geq \sum_{\eta_i>0} g(\eta_i x)+k_2g(0)$, where $k_1=\#\{i\in [n]:\theta_i=0\}$ and $k_2=\#\{i\in [n]: \eta_i=0\}$. By definition, we have $ \sum_{i=1}^{k_1}\eta_{(i)}\leq \sum_{i=1}^{k_1} \theta_{(i)}=0$, implying $k_2\geq k_1$.  Consequently, we have $\sum_{\theta_i>0} g(\theta_i x)\geq \sum_{\eta_i>0} g(\eta_i x)$ for all $x\in (0,\infty)$.   This implies that, for $x\in (0,\infty)$, $\boldsymbol\theta, \boldsymbol\eta\in \Delta_n$ satisfying $\boldsymbol\theta\preceq \boldsymbol\eta$,  
    \begin{align*}\sum_{\theta_i>0}\nu((\theta_i^{-1}x^{-1},\infty))&=\sum_{\theta_i>0}\int_{\theta_i^{-1}x^{-1}}^\infty t^{-1}g(t^{-1})\d t
    =\int_{x^{-1}}^\infty t^{-1}\sum_{\theta_i>0}g(\theta_it^{-1})\d t\\
    &\geq \int_{x^{-1}}^\infty t^{-1}\sum_{\eta_i>0}g(\eta_it^{-1})\d t=\sum_{\eta_i>0}\nu((\eta_i^{-1}x^{-1},\infty)).
    \end{align*}
    Note that for $0<\lambda_1\leq \lambda_2\leq 1/2$, we have $(\lambda_2,1-\lambda_2,\dots,0)\preceq (\lambda_1,1-\lambda_1,\dots,0)$. Hence, we have $\nu((\lambda_2^{-1}x^{-1},\infty)+\nu((1-\lambda_2)^{-1}x^{-1},\infty))\geq \nu((\lambda_1^{-1}x^{-1},\infty)+\nu((1-\lambda_1)^{-1}x^{-1},\infty))$ for all $x>0$ and $0<\lambda_1\leq \lambda_2\leq 1/2$, which implies that $\nu((x^{-1},\infty))$ is concave on $(0,\infty)$. It follows from  Theorem \ref{th:main1} that \eqref{Sub} holds. 
    
    Next, we suppose  $g$ is strictly concave.  By \eqref{Eq:CF}, the generating triplets of $\sum_{i=1}^n\theta_i X_i$ and  $\sum_{i=1}^n\eta_i X_i$ are $(0,\sum_{i=1}^n T_{\theta_i}\nu,\gamma_0)$ and $(0,\sum_{i=1}^n T_{\eta_i}\nu,\gamma_0)$ respectively.  For  $\boldsymbol\theta \prec\boldsymbol\eta$ with $\boldsymbol\theta,\boldsymbol\eta\in \Delta_n$ and a Borel measurable set $B\subset (0,\infty)$ with a positive Lebesgue measure, we have 
    \begin{align*}\sum_{i=1}^nT_{\theta_i}\nu(B)&=\sum_{\theta_i>0} \nu(\theta_i^{-1}B)=\sum_{\theta_i>0}\int_{\theta_i^{-1}B}x^{-1}g(x^{-1})\d x\\
    &=\int_{B}\sum_{\theta_i>0}x^{-1}g(\theta_ix^{-1})\d x>\int_{B}\sum_{\eta_i>0}x^{-1}g(\eta_ix^{-1})\d x\\
    &=\sum_{i=1}^nT_{\eta_i}\nu(B).
    \end{align*}
     Denote by $\nu_2=\sum_{i=1}^nT_{\theta_i}\nu-\sum_{i=1}^nT_{\eta_i}\nu\geq 0$. Clearly, $\nu_2\neq 0$.
    By $\nu((-\infty,0))=0$ and $\int_{D}|x|\nu(\d x)<\infty$, we have  $\nu_2((-\infty,0))=0$ and  $\int_{D}|x|\nu_2(\d x)<\infty$.   Using the same argument as in the proof of (i), we can show the strong strictness of \eqref{Sub}.  We complete the proof. \qed

    {\it\bf Proof of Theorem \ref{Prop:S}}. We first consider (i). Suppose  \eqref{Sup} holds for all $X_t$ with $t>0$ and all $\boldsymbol\theta\in \Delta_n$. Then it follows from Proposition \ref{Prop:onlyif} that $\nu((x^{-1},\infty))$ is subadditive on $(0,\infty)$ and $\nu((-\infty,x^{-1}))$ is superadditive on $(-\infty,0)$. As $\nu$ is symmetric, we have 
$\nu((x^{-1},\infty))=\nu((-\infty,(-x)^{-1}))$ for $x\in (0,\infty)$. Hence, $\nu((x^{-1},\infty))$ is superadditive as well, which further implies that $\nu((x^{-1},\infty))$ is a linear function. Note that $\lim_{x\downarrow 0}\nu((x^{-1},\infty))=0$. Hence, we have $\nu((x^{-1},\infty))=cx$ with some $c\geq 0$ for $x\in(0,\infty)$. It follows that $\nu(\d x)=cx^{-2}\d x$ on $(0,\infty)$. Using the symmetry of $\nu$, we have $\nu(\d x)=c|x|^{-2}\d x$ on $\R\setminus\{0\}$.

If $\nu(\d x)=c|x|^{-2}\d x$ on $\R\setminus\{0\}$, we have $\hat{\mu}_{X_t}(z)=e^{-\pi ct|z|}$ for $z\in\R$; see e.g., Theorem 14.15 of \cite{S99}.  One can easily check that $X_t\overset{d}{=}\theta_1X_t^{(1)}+\dots+\theta_nX_t^{(n)}$, where $X_t^{(i)},~i\in[n]$ are iid copies of $X_t$.  Hence, \eqref{Sup} holds. We establish the claim in (i).

The proof of (ii) is exactly the same as that of (i). Hence, it is omitted. We complete the proof. \qed

{\it \bf Proof of Proposition \ref{th:main2}}.
  Note that for any $\boldsymbol{\theta}\in\Delta_n$, $\{\sum_{i=1}^n\theta_iX_t^{(i)}\}_{t\geq 0}$ is a one dimensional L\'{e}vy process with generating triplet  $(0,T_{\boldsymbol\theta}\nu,\gamma_0)$. Moreover, it follows that 
  $$\int_{\R}|x|\wedge 1T_{\boldsymbol\theta}\nu(\d x)=\int_{\R^n}\left|\sum_{i=1}^n\theta_ix_i\right|\wedge 1\nu(\d \mathbf x)\leq n\int_{\R^n}\|\mathbf x\|\wedge 1\nu(\d \mathbf x)<\infty.$$
Hence, the ``if'' parts in (i) and (ii) follow  directly from Lemma \ref{Prop:comparison}. 

Next, we show the ``only if'' parts.   Note that, for  $t>0$, $X_t^{(1)}\leq_{\mathrm{st}}\sum_{i=1}^n\theta_iX_t^{(i)}$ is equivalent to $\mu_{X_t^{(1)}}((u,\infty))\leq \mu_{\sum_{i=1}^n\theta_iX_t^{(i)}}((u,\infty))$ on $(0,\infty)$ and  $\mu_{X_t^{(1)}}((-\infty,u ))\geq \mu_{\sum_{i=1}^n\theta_iX_t^{(i)}}((-\infty,u ))$ on $(-\infty,0)$. Then the  inequalities on L\'{e}vy measures can be derived similarly as in the proof of Theorem \ref{th:main1}. We next give some  details.  Using Corollary 8.9 of \cite{S99}, for $u>0$,  we have if $\nu(\{\mathbf x\in\R^n: x_1=u\})=0$, then $$\lim_{t\downarrow 0}t^{-1}\mu_{X_t^{(1)}}((u,\infty))=\nu(\{\mathbf x\in\R^n: x_1>u\}),$$
 and if $\nu(\{\mathbf x\in\R^n: \sum_{i=1}^n\theta_ix_i=u\})=0$, then  
$$\lim_{t\downarrow 0}t^{-1}\mu_{\sum_{i=1}^n\theta_iX_t^{(i)}}((u,\infty))=\nu\left(\left\{\mathbf x\in\R^n: \sum_{i=1}^n\theta_ix_i>u\right\}\right).$$
Hence, we have $\nu(\{\mathbf x\in\R^n: \sum_{i=1}^n\theta_ix_i>u\})\geq \nu(\{\mathbf x\in\R^n: x_1>u\})$ if
$\nu(\{\mathbf x\in\R^n: x_1=u\})+\nu(\{\mathbf x\in\R^n: \sum_{i=1}^n\theta_ix_i=u\})=0$. Note that  $\nu(\{\mathbf x\in\R^n: x_1=u\})+\nu(\{\mathbf x\in\R^n: \sum_{i=1}^n\theta_ix_i=u\})>0$ holds on a countable number of $u$.
 Using the right continuity of the corresponding functions, we have $\nu(\{\mathbf x\in\R^n: \sum_{i=1}^n\theta_ix_i>u\})\geq \nu(\{\mathbf x\in\R^n: x_1>u\})$ holds for all $u>0$. Applying Corollary 8.9 of \cite{S99} again and using the same argument as above, we have $\nu(\{\mathbf x\in\R^n: \sum_{i=1}^n\theta_ix_i<u\})\leq \nu(\{\mathbf x\in\R^n: x_1<u\})$ on $(-\infty,0)$. We establish the ``only if'' part of (i). The L\'{e}vy measure inequality in (ii) can be shown similarly and the details are omitted. 
 
 The strictness statements follow immediately from the uniqueness of the generating triplet, since the corresponding infinitely divisible distributions have different L\'{e}vy measures. We complete the proof. \qed

{\bf Proof of Theorem \ref{prop:symmetric}}. (i) $\Rightarrow$ (iii). First, suppose \eqref{Supm} holds for all $\mathbf X_t$ with $t>0$ and all $\boldsymbol\theta\in\Delta_n$.
    Note that the ``only if'' parts of Proposition \ref{th:main2} hold without the constraint $\int_{\R^n}\|\mathbf x\|\wedge 1\nu(\d \mathbf x)<\infty$. Hence, we have $\nu(\{\mathbf x\in\R^n: \sum_{i=1}^n\theta_ix_i>u\})\geq \nu(\{\mathbf x\in\R^n: x_1>u\})$ on $(0,\infty)$ and $\nu(\{\mathbf x\in\R^n: \sum_{i=1}^n\theta_ix_i<u\})\leq \nu(\{\mathbf x\in\R^n: x_1<u\})$ on $(-\infty,0)$.  The symmetry of $\mathbf X_t$ implies the symmetry of $\nu$. This implies $\nu(\{\mathbf x\in\R^n: \sum_{i=1}^n\theta_ix_i>u\})\leq \nu(\{\mathbf x\in\R^n: x_1>u\})$ on $(0,\infty)$ and $\nu(\{\mathbf x\in\R^n: \sum_{i=1}^n\theta_ix_i<u\})\geq \nu(\{\mathbf x\in\R^n: x_1<u\})$ on $(-\infty,0)$. Consequently, $\nu(\{\mathbf x\in\R^n: \sum_{i=1}^n\theta_ix_i>u\})=\nu(\{\mathbf x\in\R^n: x_1>u\})$ on $\R$ for all $\boldsymbol\theta\in\Delta_n$. This implies $T_{\boldsymbol\theta}\nu=T_{\boldsymbol\eta}\nu$ for all $\boldsymbol\theta, \boldsymbol\eta\in \Delta_n$.  

    (iii) $\Rightarrow$ (i) and (ii). Direct computation shows that, for $t>0$,
    \begin{align*}\log\hat{\mu}_{\sum_{i=1}^n \theta_iX_t^{(i)}}(z)&=t\int_{\R^n} \left(e^{iz\langle\boldsymbol \theta,\mathbf x\rangle}-1-iz\langle\boldsymbol\theta,\mathbf x\rangle\id_{D}(\mathbf x) \right)\nu(\d\mathbf x)\\
    &=t\int_{\R^n} \left(e^{iz\langle\boldsymbol \theta,\mathbf x\rangle}-1-iz\langle\boldsymbol\theta,\mathbf x\rangle\id_{\{\mathbf x: |\langle\boldsymbol\theta,\mathbf x\rangle|\leq 1\}} \right)\nu(\d\mathbf x)\\
    &\quad+izt\int_{\R^n} \langle\boldsymbol\theta,\mathbf x\rangle \left(\id_{\{\mathbf x: |\langle\boldsymbol\theta,\mathbf x\rangle|\leq 1\}}-\id_{\{\mathbf x: \|\mathbf x\|\leq 1\}} \right)\nu(\d\mathbf x)\\
    &=t\int_{\R} \left(e^{izx}-1-izx\id_{[-1,1]}(x) \right)T_{\boldsymbol\theta}\nu(\d x),~z\in\R.
    \end{align*}
    The last equality is because $\int_{\R^n} \langle\boldsymbol\theta,\mathbf x\rangle \left(\id_{\{\mathbf x: |\langle\boldsymbol\theta,\mathbf x\rangle|\leq 1\}}-\id_{\{\mathbf x: \|\mathbf x\|\leq 1\}} \right)\nu(\d\mathbf x)=0$ if $\nu$ is symmetric, which is implied by the symmetry of $\mathbf X_t$.  Hence, $T_{\boldsymbol\theta}\nu=T_{\boldsymbol\eta}\nu$ for all $\boldsymbol\theta, \boldsymbol\eta\in \Delta_n$ implies 
    $\sum_{i=1}^n \theta_iX_t^{(i)}\overset{d}{=}\sum_{i=1}^n \eta_iX_t^{(i)}$ for all $\boldsymbol\theta, \boldsymbol\eta\in \Delta_n$.  We establish the stochastic dominance in \eqref{Supm} and \eqref{Subm}. Note that (ii) $\Rightarrow$ (i) holds obviously.  We complete the proof.
\qed

  {\it \bf Proof of Proposition \ref{Prop:SM1}}. Clearly, the symmetry of $\mathbf X_t$ implies the symmetry of $\nu$ and $\rho$.  Applying Theorem \ref{prop:symmetric}, we only need to show that $T_{\boldsymbol\theta}\nu=T_{\boldsymbol\eta}\nu$ for all $\boldsymbol\theta, \boldsymbol\eta\in \Delta_n$ is equivalent to (iii) of Proposition \ref{Prop:SM1}. 
Using \eqref{stablev}, we have $T_{\boldsymbol\theta}\nu((x,\infty))=x^{-1}\int_{S_n}(\sum_{i=1}^n\theta_i\xi_i)_+\rho(\d \boldsymbol\xi)$ for $x>0$ and $T_{\boldsymbol\theta}\nu((-\infty,x))=|x|^{-1}\int_{S_n}(\sum_{i=1}^n\theta_i\xi_i)_-\rho(\d \boldsymbol\xi)$ for $x<0$. 

Hence, if (iii) holds, then $T_{\boldsymbol\theta}\nu((x,\infty))=x^{-1}\int_{S_n^+}\sum_{i=1}^n\theta_i\xi_i\rho(\d \boldsymbol\xi)=x^{-1}\int_{S_n}(\xi_i)_+\rho(\d \boldsymbol\xi)$ for $x>0$ and $T_{\boldsymbol\theta}\nu((-\infty,x))=|x|^{-1}\int_{S_n^-}\sum_{i=1}^n\theta_i(\xi_i)_-\rho(\d \boldsymbol\xi)=|x|^{-1}\int_{S_n}(\xi_i)_-\rho(\d \boldsymbol\xi)$ for $x<0$. Those expressions are independent of $\boldsymbol\theta\in \Delta_n$. Consequently, $T_{\boldsymbol\theta}\nu=T_{\boldsymbol\eta}\nu$ for all $\boldsymbol\theta, \boldsymbol\eta\in \Delta_n$.

Conversely, suppose $T_{\boldsymbol\theta}\nu=T_{\boldsymbol\eta}\nu$ for all $\boldsymbol\theta, \boldsymbol\eta\in \Delta_n$. Recall that the canonical basis vectors of $\mathbb R^n$ are denoted by $\boldsymbol e_i,~i\in [n]$. It follows that $T_{\boldsymbol e_i}((x,\infty))=x^{-1}\int_{S_n}(\xi_i)_+\rho(\d \boldsymbol\xi)$ for $x>0$ and $T_{\boldsymbol e_i}((-\infty, x))=|x|^{-1}\int_{S_n}(\xi_i)_-\rho(\d \boldsymbol\xi)$ for $x<0$. Hence, $\int_{S_n}(\xi_i)_+\rho(\d \boldsymbol\xi),i\in [n]$ are all equal, and $ \int_{S_n}(\xi_i)_-\rho(\d \boldsymbol\xi),~i\in[n]$ are all equal. Using the symmetry of $\rho$, we have $\int_{S_n}(\xi_i)_+\rho(\d \boldsymbol\xi), \int_{S_n}(\xi_i)_-\rho(\d \boldsymbol\xi),~i\in[n]$ are all equal.

Moreover, using the convexity of function $x_+$, we have, for $x>0$, \begin{align*}
    xT_{\boldsymbol\theta}\nu((x,\infty))&=\int_{S_n}(\sum_{i=1}^n\theta_i\xi_i)_+\rho(\d \boldsymbol\xi)\leq \int_{S_n}\sum_{i=1}^n\theta_i(\xi_i)_+\rho(\d \boldsymbol\xi)\\
    &=\int_{S_n}(\xi_1)_+\rho(\d \boldsymbol\xi).
\end{align*}
Note that $xT_{\boldsymbol\theta}\nu((x,\infty))= xT_{\boldsymbol e_1}\nu((x,\infty))=\int_{S_n}(\xi_1)_+\rho(\d \boldsymbol\xi)$ for $x>0$. Hence, $\int_{S_n}(\sum_{i=1}^n\theta_i\xi_i)_+\rho(\d \boldsymbol\xi)=\int_{S_n}\sum_{i=1}^n\theta_i(\xi_i)_+\rho(\d \boldsymbol\xi)$.
Note that if $\theta_i>0$ for all $i\in [n]$, then $(\sum_{i=1}^n\theta_i\xi_i)_+<\sum_{i=1}^n\theta_i(\xi_i)_+$ over $S_n\setminus (S_n^+\cup S_n^-)$ and $(\sum_{i=1}^n\theta_i\xi_i)_+=\sum_{i=1}^n\theta_i(\xi_i)_+$ over $(S_n^+\cup S_n^-)$.
Hence,  the above equality implies $\rho(S_n\setminus (S_n^+\cup S_n^-))=0$. We complete the proof. \qed

{\it \bf Proof of Theorem \ref{Thm:4}}. For $\boldsymbol\theta\in\Delta_n$, let $B_{u,\boldsymbol\theta}=\left\{\mathbf x\in\R^n: \sum_{i=1}^n\theta_ix_i>u\right\}$. Then for $u>0$,
   a direct computation shows that 
    \begin{align}\label{Eq:alpha}
        \nu(B_{u,\boldsymbol\theta})&=\int_{S_n^+}\int_0^\infty r^{-1-\alpha}\id_{B_{u,\boldsymbol\theta}}(r\boldsymbol \xi)\d r\rho(\d \boldsymbol \xi)\nonumber\\
        &=\int_{S_n^+}\int_{u(\sum_{i=1}^n\theta_i\xi_i)^{-1}}^\infty r^{-1-\alpha}\d r\rho(\d \boldsymbol \xi)\nonumber\\
        &=   \frac{ u^{-\alpha}}\alpha\int_{S_n^+}\left(\sum_{i=1}^n\theta_i\xi_i\right)^{\alpha}\rho(\d \boldsymbol \xi).
    \end{align}

    We first focus on (i).  Note that $\nu(\{\mathbf x\in\R^n: \sum_{i=1}^n\theta_ix_i<u\})=\nu(\{\mathbf x\in\R^n: x_1<u\})=0$ for all $u\in (-\infty,0)$. 
    Hence, applying Proposition \ref{th:main2} and using \eqref{Eq:alpha}, we have \eqref{Supm} holds for all $\mathbf X_t$ with $t>0$ and all $\boldsymbol\theta\in\Delta_n$ if and only if  $\int_{S_n^+}\left(\sum_{i=1}^n\theta_i\xi_i\right)^{\alpha}\rho(\d \boldsymbol \xi)\geq \int_{S_n^+}\xi_1^{\alpha}\rho(\d \boldsymbol \xi)$ for all $\boldsymbol\theta\in\Delta_n$.
    
    If $\min_{i\in [n]}\int_{S_n^+}\xi_i^\alpha\rho(\d \boldsymbol\xi)\geq \int_{S_n^+}\xi_1^\alpha\rho(\d \boldsymbol\xi)$, using the concavity of $x^\alpha$, we have
    \begin{align*}\int_{S_n^+}\left(\sum_{i=1}^n\theta_i\xi_i\right)^{\alpha}\rho(\d \boldsymbol \xi)\geq\int_{S_n^+}\sum_{i=1}^n\theta_i\xi_i^{\alpha}\rho(\d \boldsymbol \xi)\geq \min_{i=1}^n\int_{S_n^+}\xi_i^{\alpha}\rho(\d \boldsymbol \xi)
    =\int_{S_n^+}\xi_1^{\alpha}\rho(\d \boldsymbol \xi).
    \end{align*}
   If $\int_{S_n^+}\left(\sum_{i=1}^n\theta_i\xi_i\right)^{\alpha}\rho(\d \boldsymbol \xi)\geq \int_{S_n^+}\xi_1^\alpha\rho(\d \boldsymbol\xi)$ for all $\boldsymbol\theta\in\Delta_n$, then $\min_{i\in[n]}\int_{S_n^+}\xi_i^\alpha\rho(\d \boldsymbol\xi)=\int_{S_n^+}\xi_1^\alpha\rho(\d \boldsymbol\xi)$. 
    We establish (i).

    Next, we consider (ii). Note that $\nu(\{\mathbf x\in\R_+^n: \sum_{i=1}^n\theta_ix_i<u\})=\nu(\{\mathbf x\in\R_+^n: \sum_{i=1}^n\eta_ix_i<u\})=0$ for $u\in (-\infty,0)$. Hence, in light of Proposition \ref{th:main2} and \eqref{Eq:alpha}, the stochastic dominance in \eqref{Subm} holds for all $\mathbf X_t$ with $t>0$ and all   $\boldsymbol\theta,\boldsymbol\eta\in \Delta_n$ satisfying $\boldsymbol\theta \preceq\boldsymbol\eta$  if and only if $\int_{S_n^+}\left(\sum_{i=1}^n\theta_i\xi_i\right)^{\alpha}\rho(\d \boldsymbol \xi)\geq \int_{S_n^+}\left(\sum_{i=1}^n\eta_i\xi_i\right)^{\alpha}\rho(\d \boldsymbol \xi)$ for all  $\boldsymbol\theta,\boldsymbol\eta\in \Delta_n$ satisfying $\boldsymbol\theta \preceq\boldsymbol\eta$ .

    Let $H(\boldsymbol\theta)=\int_{S_n^+}\left(\sum_{i=1}^n\theta_i\xi_i\right)^{\alpha}\rho(\d \boldsymbol \xi)$ for all $\boldsymbol\theta\in \Delta_n$. The permutation invariance of $\rho$ implies that $H(\boldsymbol\theta)$ is permutation invariant on $\Delta_n$. Due to $\alpha\in (0,1)$, we have that $H(\boldsymbol\theta)$ is concave on $\Delta_n$. By Theorem 2.A.2 of \cite{MOA11},  for any $\boldsymbol\theta \preceq\boldsymbol\eta$, there exist $w_i\geq 0, i\in [n!]$ satisfying $\sum_{i=1}^{n!}w_i=1$  such that $\boldsymbol\theta= \sum_{k=1}^{n!} w_k T_{\sigma_k}\boldsymbol\eta$, where $T_{\sigma_k}$ represent all the possible permutation transforms. Hence, using the concavity and permutation invariance of $H$, we have $H(\boldsymbol\theta)=H(\sum_{k=1}^{n!} w_k T_{\sigma_k}\boldsymbol\eta)\geq H(\boldsymbol\eta)$.

    Next, suppose $H(\boldsymbol\theta)\geq H(\boldsymbol\eta)$ for all  $\boldsymbol\theta,\boldsymbol\eta\in \Delta_n$ satisfying $\boldsymbol\theta \preceq\boldsymbol\eta$ . We will show that $\rho$ is permutation-invariant. Clearly, for $\boldsymbol\theta\in \Delta_n$ and a permutation transform $T_{\sigma}$, we have $$\int_{S_n^+}\left(\sum_{i=1}^n\theta_i\xi_i\right)^\alpha\rho(\d \boldsymbol\xi)=H(\boldsymbol\theta)=H(T_{\sigma}(\boldsymbol\theta))=\int_{S_n^+}\left(\sum_{i=1}^n\theta_i\xi_i\right)^\alpha\rho_{\sigma}(\d \boldsymbol\xi),$$
    where $\rho_\sigma(B)=\rho(T_\sigma^{-1} B)$ for $B\in\mathcal B(S)$ with $T_\sigma^{-1}$ representing the inverse of $T_\sigma$. One can easily check that $H(\boldsymbol\theta)=H(T_{\sigma}(\boldsymbol\theta))$
    for all $\boldsymbol\theta\in \R_+^n$. Using the fact that for $x>0$ and $\alpha\in (0,1)$,
    $x^{\alpha}=\frac{\alpha}{\Gamma(1-\alpha)}\int_0^\infty (1-e^{-rx})r^{-1-\alpha}\d r$, we have 
    \begin{align*}
H(\boldsymbol\theta)&=\frac{\alpha}{\Gamma(1-\alpha)}\int_{S_n^+}\int_0^\infty (1-e^{-r\sum_{i=1}^n\theta_i\xi_i})r^{-1-\alpha}\d r\rho(\d\boldsymbol\xi),\\
H(T_\sigma\circ\boldsymbol\theta)&=\frac{\alpha}{\Gamma(1-\alpha)}\int_{S_n^+}\int_0^\infty (1-e^{-r\sum_{i=1}^n\theta_i\xi_i})r^{-1-\alpha}\d r\rho_\sigma(\d\boldsymbol\xi).
    \end{align*}
    Then $H(\boldsymbol\theta)=-\frac{\alpha}{\Gamma(1-\alpha)}\ln\E e^{-\sum_{i=1}^n\theta_iX_i}$, where $\mathbf X$ has an infinitely divisible distribution with triplet $(0,\nu,0)$, where $\nu$ is defined by \eqref{stablev}. Similarly, $H(T_\sigma\circ\boldsymbol\theta)=-\frac{\alpha}{\Gamma(1-\alpha)}\ln\E e^{-\sum_{i=1}^n\theta_iY_i}$, where $\mathbf Y$ has an infinitely divisible distribution with triplet $(0,\nu_\sigma,0)$ with $\nu_\sigma$  given by 
    $$\nu_\sigma(B)=\int_{S_n^+}\int_0^\infty r^{-1-\alpha}\id_B(r\boldsymbol \xi)\d r\rho_\sigma(\d \boldsymbol \xi).$$
    Hence, we have $\E e^{-\sum_{i=1}^n\theta_iX_i}=\E e^{-\sum_{i=1}^n\theta_iY_i}$ for all $\boldsymbol\theta\in \R_+^n$. Note that both $\mathbf X$ and $\mathbf Y$ are non-negative. Therefore, we have $\mathbf X\overset{d}{=}\mathbf Y$. It follows from Theorem 8.1 of \cite{S99} that $\nu=\nu_\sigma$, which further implies $\rho=\rho_\sigma$ on $S_n^+$.  Hence,  $\rho$ is permutation-invariant on $S_n^+$. We complete the proof.  \qed

{\it\bf Proof of Proposition \ref{prop:heter}}. Note that $\nu_i(\d x)=\lambda_ix^{-\alpha_i-1}\d x$ on $(0,\infty)$.  We first focus on (i).   It follows from Proposition \ref{th:main2} that the validity of  \eqref{Supm}  for all $(X_t^{(1)},\dots,X_t^{(n)})$ with $t>0$ and all $\boldsymbol\theta\in\Delta_n$ is equivalent to   $\sum_{\theta_i>0}\nu_i((\theta_i^{-1}x,\infty))\geq \nu_1((x,\infty))$ for $x>0$ and all $\boldsymbol\theta\in\Delta_n$, which is further equivalent to  $\sum_{\theta_i>0}\lambda_i\alpha_i^{-1}\theta_i^{\alpha_i} x^{-\alpha_i}\geq \lambda_1\alpha_1^{-1} x^{-\alpha_1}$ for $x>0$ and all $\boldsymbol\theta\in\Delta_n$. Note that for $i\neq 1$, $\lambda_i\alpha_i^{-1}x^{-\alpha_i}\geq \lambda_1\alpha_1^{-1} x^{-\alpha_1}$ is equivalent to $\lambda_i\alpha_i^{-1}x^{\alpha_1-\alpha_i}\geq \lambda_1\alpha_1^{-1}$ for all $x>0$, which implies $\alpha_i=\alpha_1$ and $\lambda_i\geq \lambda_1$. Conversely, if $\alpha_1=\cdots=\alpha_n$ and $\lambda_i\geq \lambda_1,~i\in[n]$, then $\sum_{\theta_i>0}\lambda_i\alpha_i^{-1}\theta_i^{\alpha_i} x^{-\alpha_i}\geq \lambda_1\alpha_1^{-1} x^{-\alpha_1}$ for all $\boldsymbol\theta\in\Delta_n$. Hence, (i) holds.   

For (ii), the ``if'' part follows directly from (ii) of Proposition \ref{Prop:2}. Next, we consider the ``only if'' part. By (i), we have $\alpha_1=\cdots=\alpha_n$ and $\lambda_i\geq \lambda_1,~i\in[n]$. It is trivial to show $\lambda_1=\cdots=\lambda_n$. We complete the proof. \qed

 {\it\bf Proof of Proposition \ref{Prop:ICP}}.
We first focus on (i). It follows from Proposition \ref{th:main2} that the validity of  \eqref{Supm}  for all $\mathbf X_t$ with $t>0$ and all $\boldsymbol\theta\in\Delta_n$ is equivalent to the following inequalities holding for  all $\boldsymbol\theta\in\Delta_n$: $\sum_{\theta_i>0}\lambda_i(1-F_{Y_i}(\theta_i^{-1}x))\geq \lambda_1(1-F_{Y_1}(x))$ for $x>0$ and  $\sum_{\theta_i>0}\lambda_iF_{Y_i}(\theta_i^{-1}x)\leq \lambda_1F_{Y_1}(x)$ for $x<0$. Let $f_i(x)=\lambda_iF_{Y_i}(x^{-1})$ for $x<0$. Then we have $\sum_{i=1}^nf_i(x_i)\leq f_1(x_1+\dots+x_n)$ for all $x_i<0$, which can be rewritten as $\sum_{i=2}^nf_i(x_i)\leq f_1(x_1+\dots+x_n)-f_1(x_1)$. Note that $f_i$ are decreasing and $\lim_{x\downarrow -\infty}f_i(x)=\lambda_iF_{Y_i}(0-)$. Hence, we have $\sum_{i=2}^nf_i(x_i)\leq \lim_{x_1\downarrow -\infty}(f_1(x_1+\dots+x_n)-f_1(x_1))=0$, which implies $F_{Y_2}(0-)=\dots=F_{Y_n}(0-)=0$.
Conversely, $F_{Y_2}(0-)=\dots=F_{Y_n}(0-)=0$ implies $\sum_{\theta_i>0}\lambda_iF_{Y_i}(\theta_i^{-1}x)\leq \lambda_1F_{Y_1}(x)$ for $x<0$ and all $\boldsymbol\theta\in\Delta_n$. Consequently, $\sum_{\theta_i>0}\lambda_iF_{Y_i}(\theta_i^{-1}x)\leq \lambda_1F_{Y_1}(x)$ for $x<0$ and all $\boldsymbol\theta\in\Delta_n$ is equivalent to $F_{Y_2}(0-)=\dots=F_{Y_n}(0-)=0$. Clearly, $\sum_{\theta_i>0}\lambda_i(1-F_{Y_i}(\theta_i^{-1}x))\geq \lambda_1(1-F_{Y_1}(x))$ for $x>0$ and $\boldsymbol\theta\in\Delta_n$ is equivalent to $\lambda_1(1-F_{Y_1}((x_1+\dots+x_n)^{-1}))\leq \sum_{i=1}^n\lambda_i(1-F_{Y_i}(x_i^{-1}))$ for all $x_i\in (0,\infty)$. We establish (i).

Next, we consider (ii). The ``if'' part is implied by (ii) of Proposition \ref{Prop:3}. For the ``only if'' part, note that $\boldsymbol e_i\preceq \boldsymbol e_j$ for all $i,j\in[n]$. Hence, we have $X_t^{(i)}\overset{d}{=}X_t^{(j)}$ for all $i,j\in [n]$ and all $t>0$. Hence, it boils down to the iid case and the conclusion follows from (ii) of Proposition \ref{Prop:3}. We complete the proof. \qed 

The following lemma  extends Theorem 2 of \cite{M25}, which will be used in the proofs later.
\begin{lemma}\label{le:extended}
\begin{enumerate}[(i)]
    \item For $m\geq 2$, suppose \eqref{Sup} holds for all $X_i,~i\in[m]$ and all $\boldsymbol \theta\in\Delta_n$, and $\phi:\R^m\to\R$ is an increasing convex function. If $X_i,~i\in[m]$ are independent, then  \eqref{Sup} holds for $\phi(X_1,\dots,X_m)$ for all $\boldsymbol \theta\in\Delta_n$.
    \item For $m\geq 2$, suppose $X_i\leq_{\mathrm{st}}Y_i,~i\in[m]$,   $X_i,i\in [m]$ are independent, and $Y_i,i\in [m]$ are independent. Moreover, $\phi:\R^m\to\R$ is an increasing  function. Then $\phi(X_1,\dots,X_m)\leq_{\mathrm{st}} \phi(Y_1,\dots,Y_m)$.
\end{enumerate}    
\end{lemma}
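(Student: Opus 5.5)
The plan is to prove part (ii) first by a coupling argument, and then deduce part (i) from (ii) combined with the convexity of $\phi$.

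\emph{Part (ii).} Take independent random variables $U_1,\dots,U_m$, each uniformly distributed on $(0,1)$; this is possible since the probability space is atomless. Set $X_i'=F_{X_i}^{-1}(U_i)$ and $Y_i'=F_{Y_i}^{-1}(U_i)$, using left-continuous generalized inverses. Then $X_i'\overset{d}{=}X_i$ and $Y_i'\overset{d}{=}Y_i$. Moreover, $X_i\le_{\mathrm{st}}Y_i$ means $F_{X_i}\ge F_{Y_i}$ pointwise, so $F_{X_i}^{-1}\le F_{Y_i}^{-1}$ and hence $X_i'\le Y_i'$ almost surely. Because the $U_i$ are independent, the vector $(X_1',\dots,X_m')$ has independent components. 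Its joint law is therefore the product of the marginals, which is the law of $(X_1,\dots,X_m)$; similarly $(Y_1',\dots,Y_m')\overset{d}{=}(Y_1,\dots,Y_m)$. Since $\phi$ is increasing, $\phi(X_1',\dots,X_m')\le \phi(Y_1',\dots,Y_m')$ almost surely, which gives the stated stochastic dominance. The independence assumption is used exactly here: it lets us identify the joint laws from the marginals.

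\emph{Part (i).} Let $\mathbf X^{(j)}=(X_1^{(j)},\dots,X_m^{(j)})$, $j\in[n]$, be iid copies of $\mathbf X=(X_1,\dots,X_m)$, and put $S_i=\sum_{j=1}^n\theta_jX_i^{(j)}$ for $i\in[m]$. The hypothesis that \eqref{Sup} holds for each $X_i$ gives $X_i\le_{\mathrm{st}}S_i$, since $X_i^{(1)},\dots,X_i^{(n)}$ are iid copies of $X_i$.

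The key point is that $S_1,\dots,S_m$ are independent. This holds because the whole family $\{X_i^{(j)}: i\in[m],\,j\in[n]\}$ is mutually independent: the copies $\mathbf X^{(j)}$ are independent across $j$, and within each copy the components are independent. Each $S_i$ is a function of the disjoint block $\{X_i^{(j)}\}_{j\in[n]}$, so the $S_i$ are independent.

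Applying part (ii) then gives
$$\phi(X_1,\dots,X_m)\le_{\mathrm{st}}\phi(S_1,\dots,S_m)=\phi\Big(\sum_{j=1}^n\theta_j\mathbf X^{(j)}\Big).$$
By convexity of $\phi$ and $\boldsymbol\theta\in\Delta_n$,
$$\phi\Big(\sum_{j=1}^n\theta_j\mathbf X^{(j)}\Big)\le\sum_{j=1}^n\theta_j\phi(\mathbf X^{(j)})\quad\text{almost surely}.$$
Almost sure inequality implies $\le_{\mathrm{st}}$, and $\le_{\mathrm{st}}$ is transitive, so $\phi(\mathbf X)\le_{\mathrm{st}}\sum_j\theta_j\phi(\mathbf X^{(j)})$. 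The random variables $\phi(\mathbf X^{(j)})$ are iid copies of $\phi(\mathbf X)$, so this is exactly \eqref{Sup} for $\phi(X_1,\dots,X_m)$.

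The only delicate step is the independence bookkeeping needed to invoke part (ii) for the vector $(S_1,\dots,S_m)$; everything else is a routine combination of the coupling and Jensen's inequality.
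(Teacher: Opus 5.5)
Your proof is correct and follows the paper's argument. In (i) you take mutually independent copies $X_i^{(j)}$, use componentwise dominance plus independence of the $S_i$ to get $\phi(X_1,\dots,X_m)\le_{\mathrm{st}}\phi(S_1,\dots,S_m)$, and then use convexity for the almost-sure bound; for (ii) the paper simply cites the standard closure of $\le_{\mathrm{st}}$ under increasing functions of independent variables, and your quantile coupling is the usual proof of that fact.
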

{\it \bf Proof of Lemma \ref{le:extended}}.  We first focus on (i). The idea of the proof is the same as that of Theorem 2 of \cite{M25}.  Let $X_i^{(j)},~j\in [n]$ be independent copies of $X_i$, and suppose $X_i^{(j)},~i\in [m], j\in [n]$ are mutually independent. Then  for $\boldsymbol \theta\in\Delta_n$, we have  $X_i\leq_{\mathrm{st}}\sum_{j=1}^n\theta_jX_i^{(j)}$. Since $\phi$ is increasing on each component and is also convex, then we have 
\begin{align*}
\phi(X_1,\dots,X_m)\leq_{\mathrm{st}}\phi\left(\sum_{j=1}^n\theta_jX_1^{(j)},\dots,\sum_{j=1}^n\theta_jX_m^{(j)}\right)\leq_{\mathrm{a.s.}}\sum_{j=1}^n\theta_j\phi(X_1^{(j)},\dots,X_m^{(j)}).
\end{align*}
We establish assertion (i). 
 Assertion (ii) follows directly from the preservation of the usual stochastic order under increasing functions and independence. This completes the proof.
 \qed 


{\it\bf Proof of Proposition \ref{Prop:ruin}}. Note that $\nu((x^{-1},\infty))$ is subadditive on $(0,\infty)$. Then it follows from (i) of Theorem \ref{th:main1} that $X_s\leq_{\mathrm{st}}\sum_{i=1}^n\theta_iX_s^{(i)}$ for all $s>0$ and all $\boldsymbol\theta\in \Delta_n$.  For $t>0$ and $m>1$, let $t_{k}^{(m)}=km^{-1}t$ for $k\in [m]$ and $\phi_m(x_1,\dots,x_m)=\max(x_1,x_1+x_2,\dots,x_1+\dots+x_m)$. Note that 
$\phi_m$ is an increasing function. Moreover, $X_{t_k^{(m)}}-X_{t_{k-1}^{(m)}}\leq_{\mathrm{st}}\sum_{i=1}^n\theta_i(X_{t_k^{(m)}}^{(i)}-X_{t_{k-1}^{(m)}}^{(i)}),~k\in [m]$ with $t_0^{(m)}=0$, $X_{t_k^{(m)}}-X_{t_{k-1}^{(m)}},~k\in [m]$ are independent, and $\sum_{i=1}^n\theta_i(X_{t_k^{(m)}}^{(i)}-X_{t_{k-1}^{(m)}}^{(i)}),~k\in [m]$ are independent. Then by (ii) of  Lemma \ref{le:extended}, we have $\max_{k=1}^m X_{t_{k}^{(m)}}\leq_{\mathrm{st}}\max_{k=1}^m \sum_{i=1}^n\theta_iX_{t_{k}^{(m)}}^{(i)}$. Letting $m\to\infty$ and using the fact that the sample paths are right-continuous, we have
\begin{align*}
\max_{0\leq s\leq t}X_s\leq_{\mathrm{st}} \max_{0\leq s\leq t}\sum_{i=1}^n\theta_iX_s^{(i)}
    \end{align*}
    for all $t>0$ and all $\boldsymbol\theta\in \Delta_n$. This shows the first inequality. The second one holds obviously.
\qed

{\it \bf Proof of Proposition \ref{Prop:SI}}. 
 If  $\nu((x^{-1},\infty))$ is subadditive on $(0,\infty)$, then \eqref{Sup} holds for all $X_t$ with $t>0$ and  all $\boldsymbol \theta\in\Delta_n$ due to (i) of Theorem \ref{th:main1}.  For $t>0$ and $m>1$, let $t_{k}^{(m)}=km^{-1}t$ for $k\in [m]$.
Then (i) of Lemma \ref{le:extended} implies \eqref{Sup} holds for $\sum_{k=1}^m f(t_{k}^{(m)},t)\left(X_{t_{k}^{(m)}}-X_{t_{k-1}^{(m)}}\right)$. Note  that $\lim_{m\to\infty}\sum_{k=1}^m f(t_{k}^{(m)},t)\left(X_{t_{k}^{(m)}}-X_{t_{k-1}^{(m)}}\right)=Z_t$ a.s. Hence, Theorem 2.3 of \cite{M25} implies that  \eqref{Sup} holds for all $Z_t$ with $t>0$ and  all $\boldsymbol \theta\in\Delta_n$.

If  $\nu((x^{-1},\infty))$ is concave on $(0,\infty)$, in light of (ii) of Theorem \ref{th:main1},  \eqref{Sub} holds for all $X_t$ with $t>0$ and  all $\boldsymbol \theta,\boldsymbol\eta\in\Delta_n$ satisfying $\boldsymbol\theta \preceq\boldsymbol\eta$. By Proposition 1 of \cite{CHSZ26}, \eqref{Sub} holds for $\sum_{k=1}^m f(t_{k}^{(m)},t)\left(X_{t_{k}^{(m)}}-X_{t_{k-1}^{(m)}}\right)$. Note also that $\lim_{m\to\infty}\sum_{k=1}^m f(t_{k}^{(m)},t)\left(X_{t_{k}^{(m)}}-X_{t_{k-1}^{(m)}}\right)=Z_t$ a.s. In light of  Proposition 1 of \cite{CHSZ26}, \eqref{Sub} holds for all $Z_t$ with $t>0$ and  all $\boldsymbol \theta,\boldsymbol\eta\in\Delta_n$ satisfying $\boldsymbol\theta \preceq\boldsymbol\eta$. This completes the proof.

\qed

{\it \bf Proof of Proposition \ref{Prop:integrated}}. 
 It follows from (i) of Theorem \ref{th:main1} that \eqref{Sup} holds for all $X_t$ with $t>0$ and  all $\boldsymbol \theta\in\Delta_n$.   For $t>0$ and $m>1$, let $t_{k}^{(m)}=km^{-1}t$ for $k\in [m]$ and $\phi_m(x_1,\dots,x_m)=m^{-1}t(g(x_1)+g(x_1+x_2)+\dots+g(x_1+\dots+x_m))$. Clearly, 
$$m^{-1}t\sum_{k=1}^mg\left(X_{t_k^{(m)}}\right)=\phi_m\left(X_{t_1^{(m)}}, X_{t_2^{(m)}}-X_{t_1^{(m)}},\dots,X_{t}-X_{t_{m-1}^{(m)}}\right).$$
Note also that $\phi_m$ is increasing and convex, and  \eqref{Sup} holds for all $X_{t_1^{(m)}}, X_{t_2^{(m)}}-X_{t_1^{(m)}},\dots,X_{t}-X_{t_{m-1}^{(m)}}$, and  all $\boldsymbol \theta\in\Delta_n$. Using  (i) of Lemma \ref{le:extended},  \eqref{Sup} holds for $m^{-1}t\sum_{k=1}^mg(X_{t_k^{(m)}})$ and  all $\boldsymbol \theta\in\Delta_n$. Moreover, using the fact that $\{X_t\}_{t\geq 0}$ has right-continuous sample paths, we have
$\lim_{m\to\infty}m^{-1}t\sum_{k=1}^mg(X_{t_k^{(m)}})=\int_0^tg(X_s)\d s$ a.s. By Theorem 2.3 of \cite{M25},  \eqref{Sup} holds for all $W_t$ with $t>0$ and  all $\boldsymbol \theta\in\Delta_n$.
\qed

{\it\bf Proof of Proposition \ref{ex:time-dependent-dominance}}.
We first verify that the compound Poisson process is well-defined.
Writing $w(z)=1-\sqrt{1-z}$, we obtain
$$
\frac{d}{dz}\log Q(z)
=\frac12\left\{
\frac{1}{1-w(z)^2}
+\frac{1}{(1-w(z))(2-w(z))}\right\}.
$$
Both rational functions of $w$ have nonnegative Taylor coefficients,
as does $w(z)$, with $w(0)=0$.
Thus $\lambda_k\ge0$ for all $k\ge1$; letting $z\uparrow1$ in
$\log Q(z)=\log Q(0)+\sum_{k\ge1}\lambda_k z^k$ gives
$\sum_{k\ge1}\lambda_k=\log4$. Hence, the compound Poisson process is well-defined. 
Moreover, $Q(z)$ can be rewritten as 
$$
Q(z)=\exp\left\{\sum_{k=1}^\infty\lambda_k(z^k-1)\right\},$$
which implies that  $Q(z)$ is the probability generating function of $X_1$ and for $t\geq 0$,
$$
\mathbb E[z^{X_t}]=Q(z)^t,\quad 0\le z\le1.
$$
Let $\psi(u)=-\log Q(e^{-u})$,
$\psi_\lambda(u)=\psi(\lambda u)+\psi((1-\lambda)u)$,
$a=3/2$, and $\kappa_\lambda=\sqrt\lambda+\sqrt{1-\lambda}$.
The L\'evy measure $\nu$ of $X$ has total mass $\Lambda=\sum_{k\ge1}\lambda_k=\log4$.
Direct expansion gives, as $u\downarrow0$,
$$
\psi(u)=a\sqrt u+O(u),\qquad
\psi'(u)=\frac{a}{2\sqrt u}+O(1),\qquad
-\psi''(u)\sim\frac{a}{4u^{3/2}}.
$$
Consequently,
$\psi_\lambda(u)=a\kappa_\lambda\sqrt u+O(u)$,
uniformly for $\lambda\in[\varepsilon,1-\varepsilon]$.
 Note that
$\psi(u)/u=\int_0^\infty e^{-ux}\overline\nu(x)\,dx\sim au^{-1/2}$ as $u\downarrow 0$, where $\overline\nu(x)=\nu((x,\infty))$. Using the Karamata Tauberian theorem (Theorem 1.7.1 in \cite{BGT87}) and monotone density theorem (Theorem 1.7.2 in \cite{BGT87}), we have, as $x\to\infty$, 
\begin{align}\label{Eq:asymptotics}
\int_0^x \overline{\nu}(t) \d t\sim 2C\sqrt{x}
\quad  \text{ and }\quad \overline\nu(x)\sim Cx^{-1/2}
 \quad \text{ with }
\quad C=\frac{a}{\sqrt\pi}.
\end{align}
Using integration by parts,
we further have, as $x\to\infty$, 
\begin{align}\label{Eq:asymptotics1}\int_{(0,x]}y\,\nu(dy)\sim C\sqrt x.
\end{align}
Moreover, for $u>0$ and $\lambda\in(0,1)$,
$$
\psi_\lambda(u)-\psi(u)
=\int_{(0,\infty)}
(1-e^{-\lambda uy})(1-e^{-(1-\lambda)uy})\,\nu(dy)>0.
$$

Fix $\varepsilon\in(0,1/2)$ and suppose that the first assertion
fails. Then there exist $t_j\to\infty$,
$\lambda_j\in[\varepsilon,1-\varepsilon]$, and $x_j\ge0$ such that,
with $Y_j=\lambda_jX_{t_j}^{(1)}
+(1-\lambda_j)X_{t_j}^{(2)}$,
$$
\mathbb P(Y_j\le x_j)>\mathbb P(X_{t_j}\le x_j).
$$
Passing to a subsequence, assume that
$\lambda_j\to\lambda\in[\varepsilon,1-\varepsilon]$.
We obtain a contradiction in each of the following four cases,
which exhaust all possibilities after further subsequence selection.

\emph{Case 1: $x_j/t_j\to v\in[0,\infty)$.}
If $v=0$, choose $u>0$ such that $\psi_\lambda(u)>3\Lambda/2$;
this is possible since $\psi_\lambda(u)\to2\Lambda$ as
$u\to\infty$. Since $\lambda_j\to\lambda$, there exists $j_0>1$ such that $\psi_{\lambda_j}(u)>3\Lambda/2$ for $j\geq j_0$.  Using Markov's inequality and the distribution of $X_{t_j}$, we have, for sufficiently large $j\geq j_0$,
$$
\mathbb P(Y_j\le x_j)\leq \E(e^{-u(Y_j-x_j)}\id_{\{Y_j\leq x_j\}})\leq e^{u x_j}\E(e^{-uY_j})
=e^{ux_j-t_j\psi_{\lambda_j}(u)}
<e^{-\Lambda t_j}
\le\mathbb P(X_{t_j}\le x_j),
$$
which contradicts $\mathbb P(Y_j\le x_j)>\mathbb P(X_{t_j}\le x_j)$.
Suppose next that $v>0$. Clearly, $\psi'(0+)=\infty$.  Note that $\psi'(u)=\int_0^\infty x e^{-ux}\nu(\d x)$. Dominated convergence theorem implies $\lim_{u\to\infty}\psi'(u)=0$. Using the continuity and strict monotonicity of $\psi'(u)$ over $(0,\infty)$,
there exists a unique $u_v>0$ satisfying $\psi'(u_v)=v$.
Set $I(v)=\psi(u_v)-u_vv$.
Next, we show that the exponential tilting gives
$$
\liminf_{j\to\infty}\frac1{t_j}\log\mathbb P(X_{t_j}\le x_j)
\geq-I(v).
$$
Define the exponential tilted probability measure by
$$
\frac{d\mathbb P_{t,u}}{d\mathbb P}
=e^{-uX_t+t\psi(u)}.$$
Under $\mathbb P_{t,u}$,
$$
\mathbb E_{t,u}[X_t]=t\psi'(u),
\quad
\operatorname{Var}_{t,u}(X_t)=-t\psi''(u).
$$
Taking $u>u_v$, then it follows that $\psi'(u)<v$. Choosing $0<\delta<v-\psi'(u)$, we have $\psi'(u)+\delta<v$. Since $x_j/t_j\to v$ as $j\to\infty$, there exists $j_1>0$ such that $\psi'(u)+\delta<x_j/t_j$ for $j\geq j_1$. Let $A_j=\{|X_{t_j}/t_j- 
\psi'(u)|<\delta\}$. Clearly, $A_j\subseteq \{X_{t_j}\leq x_j\}$ for $j\geq j_1$. Hence, we have
\begin{align*}
    \mathbb P(X_{t_j}\le x_j)\geq \mathbb P(A_j)=\p_{t_j,u}\left(e^{uX_{t_j}-t_j\psi(u)}\id_{A_j}\right)\geq e^{t_j(u(\psi'(u)-\delta)-\psi(u))}\p_{t_j,u}(A_j).
\end{align*}
Moreover, Chebyshev's inequality implies that
$\p_{t_j,u}(A_j)\geq 1-\frac{-\psi''(u)}{t_j\delta^2}\to 1 $
as $j\to\infty$.
Therefore,  
$$
\liminf_{j\to\infty}\frac1{t_j}\log\mathbb P(X_{t_j}\le x_j)
\geq u(\psi'(u)-\delta)-\psi(u).
$$
The lower bound is obtained by letting $\delta\downarrow 0$ and $u\downarrow u_v$.
On the other hand, using Markov's inequality, we have
$$\p(Y_j\le x_j)=\p(e^{-u_vY_j}\geq e^{-u_vx_j})\leq e^{u_vx_j}\E(e^{-u_vY_j})=e^{u_vx_j-t_j\psi_{\lambda_j}(u_v)}.$$
Hence, using the fact that $\psi_\lambda(u)>\psi(u)$ for $u>0$ and $0<\lambda<1$, we have
$$
\limsup_{j\to\infty}\frac1{t_j}
\log\mathbb P(Y_j\le x_j)\leq \limsup_{j\to\infty}\left(u_v\frac{x_j}{t_j}-\psi_{\lambda_j}(u_v)\right)=u_vv-\psi_\lambda(u_v)
<u_vv-\psi(u_v)=-I(v),
$$
which contradicts $\mathbb P(Y_j\le x_j)>\mathbb P(X_{t_j}\le x_j)$.

\emph{Case 2: $x_j/t_j\to\infty$ and $x_j/t_j^2\to0$.}
We first show that
$$
\log\mathbb P(X_{t_j}\le x_j)
\geq -\left(\frac{a^2}{4}+o(1)\right)\frac{t_j^2}{x_j}.
$$
Fix $\delta\in(0,1/2)$ and take
$u_j=(at_j/(2(1-\delta)x_j))^2$.
Under $\mathbb P_{t_j,u_j}$, we have, 
$$
\mathbb E_{t_j,u_j}[X_{t_j}]=t_j\psi'(u_j)
=(1-\delta)x_j+O(t_j),
\qquad
\operatorname{Var}_{t_j,u_j}(X_{t_j})=-t_j\psi''(u_j)
=O(x_j^3/t_j^2)=o(x_j^2).
$$
Let $A_j'=\{(1-2\delta)x_j\le X_{t_j}\le x_j\}$. 
Then Chebyshev's inequality implies that
$\p_{t_j,u_j}(A_j')\geq \p_{t_j,u_j}(\{|X_{t_j}-t_j\psi'(u_j)|\leq (x_j\delta)/2\})\geq 1-\frac{-4t_j\psi''(u_j)}{x_j^2\delta^2}\to 1 $
as $j\to\infty$. Moreover, direct computation gives
\begin{align*}
    \mathbb P(X_{t_j}\le x_j)\geq \E_{t_j,u_j}\left(e^{u_jX_{t_j}-t_j\psi(u_j)}\id_{A_j'}\right)\geq e^{u_jx_j(1-2\delta)-t_j\psi(u_j)}\p_{t_j,u_j}(A_j').
\end{align*}
Hence, we have 
$$
\begin{aligned}
\log\mathbb P(X_{t_j}\le x_j)
&\ge u_j(1-2\delta)x_j-t_j\psi(u_j)+o(1)\\
&=-\left(\frac{a^2}{4(1-\delta)^2}+o(1)\right)
\frac{t_j^2}{x_j}.
\end{aligned}
$$
Letting $\delta\downarrow0$ proves the claimed lower bound.

Moreover, using Markov's inequality with $u_j=(a\kappa_{\lambda_j}t_j/(2x_j))^2$, we have
$$\p(Y_j\le x_j)=\p(e^{-u_jY_j}\geq e^{-u_jx_j})\leq e^{u_jx_j}\E(e^{-u_jY_j})=e^{u_jx_j-t_j\psi_{\lambda_j}(u_j)},$$
which implies 
$$
\log\mathbb P(Y_j\le x_j)
\le-\left(\frac{a^2\kappa_{\lambda_j}^2}{4}+o(1)\right)
\frac{t_j^2}{x_j}.
$$
Since $\kappa_{\lambda_j}>1$, this again yields a contradiction.

\emph{Case 3: $x_j/t_j^2\to v\in(0,\infty)$.}
Using the expansion of $\psi$ around zero, we have for $u>0$
$$
\lim_{j\to\infty}\E\left(e^{-ut_j^{-2}X_{t_j}}\right)=e^{-t_j\psi(t_j^{-2}u)}=e^{-a\sqrt u}, \quad \lim_{j\to\infty}\E\left(e^{-ut_j^{-2}Y_j}\right)=e^{-t_j\psi_{\lambda_j}(t_j^{-2}u)}=e^{-a\ \kappa_\lambda\sqrt u},
$$
which implies that,
as  $j\to\infty$,
$$
\frac{X_{t_j}}{t_j^2}\ \rightarrow\ Z,
\qquad
\frac{Y_j}{t_j^2}\ \rightarrow\ \kappa_\lambda^2Z
$$
in distribution, where $\mathbb E[e^{-uZ}]=e^{-a\sqrt u}$.
Note that the random variable $Z$ has a strictly positive density
$$
f_Z(x)=\frac{a}{2\sqrt\pi}x^{-3/2}e^{-a^2/(4x)},
\qquad x>0.
$$
Consequently, as $j\to\infty$,
$$
\mathbb P(X_{t_j}\le x_j)-\mathbb P(Y_j\le x_j)
\longrightarrow F_Z(v)-F_Z(v/\kappa_\lambda^2)>0,
$$
another contradiction.

\emph{Case 4: $x_j/t_j^2\to\infty$.}
We first show that 
$$
\mathbb P(X_t>x)\sim Ct/\sqrt x
\qquad\text{whenever }t\to\infty
\text{ and }x/t^2\to\infty.
$$
Fix $\eta\in (0,1/2)$. For the compound Poisson process, let  $A_1$ represent the event that at least one jump larger than  $(1-\eta)x$  over  $[0,t]$,
$A_2$ represent the event that at least two jumps larger than $\eta^4x$  over  $[0,t]$ and $A_3=\{ S_t>\eta x\}$, where $S_t$ represents the cumulative jumps smaller than $\eta^4 x$ over $[0,t]$.
Clearly, $\{X_t>x\}\subseteq A_1\cup A_2\cup A_3$. Using the Poisson random measure representation of Poisson processes, we have 
$\p(A_1)=1-e^{-t\overline\nu((1-\eta)x)}\leq t\overline\nu((1-\eta)x)$ and $\p(A_2)=1-e^{-t\overline\nu(\eta^4x)}(1+t\overline\nu(\eta^4x))\leq (t\overline\nu(\eta^4 x))^2/2$. Applying Markov's inequality, we have
$$\p(A_3)\leq \frac{\E(S_t)}{\eta x}=\frac{t}{\eta x}\int_{(0,\eta^4x]}y\,\nu(dy).$$
Summarizing the above inequalities, we have 
$$
\begin{aligned}
\mathbb P(X_t>x)\le{}&
t\overline\nu((1-\eta)x)
+\frac12\bigl(t\overline\nu(\eta^4x)\bigr)^2
+\frac{t}{\eta x}\int_{(0,\eta^4x]}y\,\nu(dy).
\end{aligned}
$$
Using the asymptotics in \eqref{Eq:asymptotics} and \eqref{Eq:asymptotics1}, we have  $\p(X_t>x)\leq  C((1-\eta)^{-1/2}+\eta+o(1))t/\sqrt x$.
Letting $\eta\downarrow0$ proves the upper bound. Using the event of a jump exceeding $x$ gives the lower
bound $\mathbb P(X_t>x)\geq 1-e^{-t\overline\nu(x)}\sim t\overline\nu(x)\sim Ct/\sqrt x$. We establish the claimed asymptotics.

Applying it at $x_j/\lambda_j$ and $x_j/(1-\lambda_j)$,
and using independence, we obtain
$$
\begin{aligned}
\mathbb P(Y_j>x_j)
&\ge\mathbb P\bigl(
\{\lambda_jX_{t_j}^{(1)}>x_j\}
\cup\{(1-\lambda_j)X_{t_j}^{(2)}>x_j\}\bigr)\\
&=(\kappa_{\lambda_j}+o(1))\frac{Ct_j}{\sqrt{x_j}}
>\mathbb P(X_{t_j}>x_j)
\end{aligned}
$$
for all sufficiently large $j$, giving the final contradiction.
This proves the first assertion.

For the second assertion, let $s=1/10$ and
$x_\lambda=\max\{\lambda,1-\lambda\}<1$.
Recall that $X_s$ is a compound Poisson random variable with parameter $\Lambda=\log 4$ and  $\nu(\{1\})=\frac{d}{dz}\log Q(z)\big|_{z=0}=3/4$. Direct computation gives
$\mathbb P(X_s=0)=4^{-s}$ and
$\mathbb P(X_s=1)=(3s/4)4^{-s}$.
The outcomes $(0,0)$, $(1,0)$, and $(0,1)$ therefore give
$$
\begin{aligned}
\mathbb P\bigl(\lambda X_s^{(1)}+(1-\lambda)X_s^{(2)}
\le x_\lambda\bigr)
&\ge4^{-2s}\left(1+\frac{3s}{2}\right)\\
&>4^{-s}=\mathbb P(X_s\le x_\lambda),
\end{aligned}
$$
where $4^{1/10}<23/20$ proves the strict inequality. This completes the proof.
\qed

\subsection{An additional result on multivariate $\alpha$-stable distributions}\label{Appendix:additional}

In this subsection, we consider the multivariate $\alpha$-stable distributions without the restriction that the L\'{e}vy measure is concentrated on the positive orthant.
For $\boldsymbol\theta\in\Delta_n$, let $A^+(\boldsymbol\theta)=\int_{S_n}\left(\sum_{i=1}^n\theta_i\xi_i\right)_+^{\alpha}\rho(\d \boldsymbol \xi)$ and $A^-(\boldsymbol\theta)=\int_{S_n}\left(\sum_{i=1}^n\theta_i\xi_i\right)_-^{\alpha}\rho(\d \boldsymbol \xi)$.
\begin{proposition}\label{Prop:additinal} Let $\{\mathbf X_t\}_{t\geq 0}$ be an $n$-dimensional $\alpha$-stable process with  $\alpha\in (0,1)$ and generating triplet $(0, \nu, \boldsymbol\gamma_0)$ with $\nu$ given by \eqref{stablev}. Then we have the following conclusions.
\begin{enumerate}[(i)]
\item  Fix $\boldsymbol\theta\in\Delta_n$. Then the stochastic dominance in \eqref{Supm} holds for all $\mathbf X_t$ with $t>0$ if and only if $A^+(\boldsymbol\theta)\geq A^+(\boldsymbol e_1)$ and $A^-(\boldsymbol\theta)\leq A^-(\boldsymbol e_1)$.
\item Fix $\boldsymbol\theta,\boldsymbol\eta\in \Delta_n$ satisfying $\boldsymbol\theta \preceq\boldsymbol\eta$. Then the  stochastic dominance in \eqref{Subm} holds for all $\mathbf X_t$ with $t>0$ if and only if $A^+(\boldsymbol\theta)\geq A^+(\boldsymbol\eta)$ and $A^-(\boldsymbol\theta)\leq A^-(\boldsymbol\eta)$.
\end{enumerate}
\end{proposition}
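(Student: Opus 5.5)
The plan is to reduce both statements to Proposition \ref{th:main2} by computing the one-dimensional tails of the projected L\'{e}vy measures $T_{\boldsymbol\theta}\nu$ explicitly. Since $\alpha\in(0,1)$, we have $\int_{\R^n}\|\mathbf x\|\wedge 1\,\nu(\d\mathbf x)=\rho(S_n)\int_0^\infty (r\wedge 1)r^{-1-\alpha}\d r<\infty$. Hence the process has bounded variation, and Proposition \ref{th:main2} applies in both directions. The drift $\boldsymbol\gamma_0=(\gamma_0,\dots,\gamma_0)$ is common, so every projection $\langle\boldsymbol\theta,\mathbf X_t\rangle$ with $\boldsymbol\theta\in\Delta_n$ has uncompensated drift $t\gamma_0$. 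Thus only the L\'{e}vy tails matter.

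First I compute, exactly as in \eqref{Eq:alpha} but now over all of $S_n$, the tails of the projections. For $u>0$, the point $r\boldsymbol\xi$ lies in $\{\sum_i\theta_ix_i>u\}$ iff $\sum_i\theta_i\xi_i>0$ and $r>u(\sum_i\theta_i\xi_i)^{-1}$. This gives
\begin{align*}
\nu\Big(\Big\{\mathbf x:\sum_{i=1}^n\theta_ix_i>u\Big\}\Big)=\frac{u^{-\alpha}}{\alpha}A^+(\boldsymbol\theta).
\end{align*}
By the same computation, for $u<0$,
\begin{align*}
\nu\Big(\Big\{\mathbf x:\sum_{i=1}^n\theta_ix_i<u\Big\}\Big)=\frac{|u|^{-\alpha}}{\alpha}A^-(\boldsymbol\theta).
\end{align*}
Here we use the convention $0^\alpha=0$, so that directions with $\sum_i\theta_i\xi_i\le 0$ contribute nothing to $A^+$, and symmetrically for $A^-$. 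Taking $\boldsymbol\theta=\boldsymbol e_1$ gives the marginal tails of the first component.

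Next, I plug these into Proposition \ref{th:main2}(i). The condition there reads $u^{-\alpha}A^+(\boldsymbol\theta)\ge u^{-\alpha}A^+(\boldsymbol e_1)$ for all $u>0$ and $|u|^{-\alpha}A^-(\boldsymbol\theta)\le|u|^{-\alpha}A^-(\boldsymbol e_1)$ for all $u<0$. Because the factors $u^{-\alpha}/\alpha$ are positive and common to both sides, these are equivalent to $A^+(\boldsymbol\theta)\ge A^+(\boldsymbol e_1)$ and $A^-(\boldsymbol\theta)\le A^-(\boldsymbol e_1)$. Part (ii) follows identically, with $\boldsymbol e_1$ replaced by $\boldsymbol\eta$ and using Proposition \ref{th:main2}(ii).

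There is no real obstacle. The only points needing care are checking the bounded-variation integrability so that the ``if'' direction of Proposition \ref{th:main2} (via Lemma \ref{Prop:comparison}) is available, and handling the directions where $\sum_i\theta_i\xi_i=0$, which carry no mass in either tail set.
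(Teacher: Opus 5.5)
Your proposal is correct and matches the paper's proof: it computes the projected tails $\frac{u^{-\alpha}}{\alpha}A^{+}(\boldsymbol\theta)$ for $u>0$ and $\frac{|u|^{-\alpha}}{\alpha}A^{-}(\boldsymbol\theta)$ for $u<0$, then reads off the conditions from Proposition \ref{th:main2}. Your explicit checks that $\int_{\R^n}\|\mathbf x\|\wedge 1\,\nu(\d\mathbf x)<\infty$ for $\alpha\in(0,1)$ and that the common drift makes all projections share the drift $t\gamma_0$ supply details the paper leaves implicit.
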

  In particular, part (ii) of Proposition \ref{Prop:additinal} shows that \eqref{Subm} holds for all $\mathbf X_t$ with $t>0$ and all $\boldsymbol\theta,\boldsymbol\eta\in \Delta_n$ satisfying $\boldsymbol\theta \preceq\boldsymbol\eta$ if and only if $A^+$ is Schur-concave and $A^-$ is Schur-convex. We refer to \cite{MOA11} for the definitions, characterizations and applications of Schur-concavity and Schur-convexity. In general,  it is difficult to obtain a further simplification of the equivalent conditions in Proposition \ref{Prop:additinal}. 

{\it \bf Proof of Proposition \ref{Prop:additinal}}. For $\boldsymbol\theta\in\Delta_n$, let $B_{u,\boldsymbol\theta}^+=\left\{\mathbf x\in\R^n: \sum_{i=1}^n\theta_ix_i>u\right\}$ and $B_{u,\boldsymbol\theta}^-=\left\{\mathbf x\in\R^n: \sum_{i=1}^n\theta_ix_i<u\right\}$.
    Then a direct computation shows that for $u>0$
    \begin{align}\label{Eq:alphap}
        \nu(B_{u,\boldsymbol\theta}^+)&=\int_{S_n}\int_0^\infty r^{-1-\alpha}\id_{B_{u,\boldsymbol\theta}^+}(r\boldsymbol \xi)\d r\rho(\d \boldsymbol \xi)\nonumber\\
        &=\int_{S_n}\int_{u(\sum_{i=1}^n\theta_i\xi_i)_+^{-1}}^\infty r^{-1-\alpha}\d r\rho(\d \boldsymbol \xi)\nonumber\\
        &=   \frac{ u^{-\alpha}}\alpha\int_{S_n}\left(\sum_{i=1}^n\theta_i\xi_i\right)_+^{\alpha}\rho(\d \boldsymbol \xi),
    \end{align}
    and for $u<0$,
\begin{align}\label{Eq:alpham}
        \nu(B_{u,\boldsymbol\theta}^-)&=\int_{S_n}\int_0^\infty r^{-1-\alpha}\id_{B_{u,\boldsymbol\theta}^-}(r\boldsymbol \xi)\d r\rho(\d \boldsymbol \xi)\nonumber\\
        &=\int_{S_n}\int_{|u|(\sum_{i=1}^n\theta_i\xi_i)_-^{-1}}^\infty r^{-1-\alpha}\d r\rho(\d \boldsymbol \xi)\nonumber\\
        &=   \frac{ |u|^{-\alpha}}\alpha\int_{S_n}\left(\sum_{i=1}^n\theta_i\xi_i\right)_-^{\alpha}\rho(\d \boldsymbol \xi).
    \end{align}
 
    Applying Proposition \ref{th:main2} and using \eqref{Eq:alphap} and \eqref{Eq:alpham}, for any fixed $\boldsymbol\theta\in\Delta_n$, we have \eqref{Supm} holds for all $\mathbf X_t$ with $t>0$ if and only if  
    $A^+(\boldsymbol\theta)\geq A^+(\boldsymbol e_1)$ and $A^-(\boldsymbol\theta)\leq A^-(\boldsymbol e_1)$. This proves (i).
    
    Similarly, for any fixed $\boldsymbol\theta,\boldsymbol\eta\in \Delta_n$ satisfying $\boldsymbol\theta \preceq\boldsymbol\eta$,  Proposition \ref{th:main2} together with \eqref{Eq:alphap} and \eqref{Eq:alpham} implies that  the stochastic dominance in \eqref{Subm} holds for all $\mathbf X_t$ with $t>0$ if and only if $A^+(\boldsymbol\theta)\geq A^+(\boldsymbol\eta)$ and $A^-(\boldsymbol\theta)\leq A^-(\boldsymbol\eta)$. This proves (ii). We complete the proof.
  \qed

\end{document}